\def\Gal{\operatorname{Gal}}
\def\mychar{\operatorname{char}}
\DeclareMathOperator{\SB}{SB}
\def\B{\operatorname{Br}}
\newcommand{\Z}{\mathbb{Z}}
\newcommand\sub{{\,\subseteq\,}}
\newcommand{\Cent}{{\operatorname{Cent}}}
\newcommand{\tr}{{\operatorname{tr}}}
 \DeclareMathOperator{\End}{End}
\def\co{{\,:\,}}
\def\ra{{\rightarrow}}
\newcommand\M[1][n]{{\operatorname{M}_{#1}}}
\newcommand\tensor[1][]{{\otimes_{#1}}}
\newcommand\hra{{\hookrightarrow}}
\newcommand\isom{{\,\cong\,}}
\newcommand\dom{{\backslash}}
\newcommand\dc[3]{{{#1}\dom{#2}/{#3}}}
\newcommand\Ann{{\operatorname{Ann}}}
\newcommand\card[1]{{\left|#1\right|}}
\newcommand\Norm[1][]{\operatorname{N}_{#1}}
\newcommand\hit{{\operatorname{ht}}}
\long\def\forget#1\forgotten{{}}
\theoremstyle{definition}
\newtheorem{definition}{Definition}
\newtheorem{defn}[definition]{Definition}
\newtheorem{cor}[definition]{Corollary}
\newtheorem{rem}[definition]{Remark}
\newtheorem{exmpl}[definition]{Example}
\newtheorem*{rem*}{Remark}
\newtheorem*{acknow*}{Acknowledgements}
\newtheorem*{examples*}{Examples}
\theoremstyle{plain}
\newtheorem{lem}[definition]{Lemma}
\newtheorem{thm}[definition]{Theorem}
\newtheorem{prop}[definition]{Proposition}
\newtheorem*{theorem*}{Theorem}
\newenvironment{proof-sketch}{\noindent{\bf Sketch of Proof}\hspace*{1em}}{\end{proof}\bigskip}
\newenvironment{proof-idea}{\noindent{\bf Proof Idea}\hspace*{1em}}{\end{proof}\bigskip}
\newenvironment{proof-of-lemma}[1]{\noindent{\bf Proof of Lemma #1}\hspace*{1em}}{\end{proof}\bigskip}
\newenvironment{proof-of-prop}[1]{\noindent{\bf Proof of Proposition #1}\hspace*{1em}}{\end{proof}\bigskip}
\newenvironment{proof-of-thm}[1]{\noindent{\bf Proof of Theorem #1}\hspace*{1em}}{\end{proof}\bigskip}
\newenvironment{proof-attempt}{\noindent{\bf Proof Attempt}\hspace*{1em}}{\end{proof}\bigskip}
\def\w{\omega}
\def\a{\alpha}
\def\s{\sigma}
\def\Ker{{\operatorname{Ker}}}
\def\span{{\operatorname{span}}}
\newcommand\set[1]{{\left\{{#1}\right\}}}
\newcommand\sg[1]{{\left<{#1}\right>}}
\newcommand\mul[1]{{{#1}^{\times}}}
\def\F{{\mathbb{F}}}
\newif\iffurther\furtherfalse
\newcommand\Tref[1]{{Theorem~\ref{#1}}}
\newcommand\Pref[1]{{Proposition~\ref{#1}}}
\newcommand\Lref[1]{{Lemma~\ref{#1}}}
\newcommand\Cref[1]{{Corollary~\ref{#1}}}
\newcommand\Rref[1]{{Remark~\ref{#1}}}
\newcommand\eq[1]{{(\ref{#1})}}
\def\TT{{$(2)$}} 
\def\TR{{$(1)$}} 
\def\TTL{{$(2^*)$}} 
\def\TRL{{$(1^*)$}} 
\begin{document}

\title[bimodule structure of central simple algebras]{Bimodule structure of central simple algebras}

\author{Eliyahu Matzri, Louis H.~Rowen, David J. Saltman, Uzi Vishne}
\address{Department of mathematics, Ben-Gurion University,
Beer Sheva, Israel}
\address{Department of mathematics, Bar Ilan University,
Ramat Gan, Israel}
\address{USA}
\address{Department of mathematics, Bar Ilan University,
Ramat Gan, Israel}

\email {elimatzri@gmail.com }
\email{rowen@math.biu.ac.il}
\email{saltman@idaccr.org}
\email{vishne@math.biu.ac.il}

\thanks{This work was supported by the U.S.-Israel Binational Science
Foundation (grant no. 2010/49)}

\subjclass[2010] {Primary: ; Secondary: }

\date{\today}


\keywords{Division algebras, subfields, commutator}

\begin{abstract}
For a maximal separable subfield $K$ of a central simple algebra $A$, we provide a semiring isomorphism between $K$-$K$-bimodules $A$ and $H$-$H$ bisets of $G = \Gal(L/F)$, where $F = \operatorname{Z}(A)$, $L$ is the Galois closure of $K/F$, and $H = \Gal(L/K)$. This leads to a combinatorial interpretation of the growth of $\dim_K((KaK)^i)$, for fixed $a \in A$, especially in terms of Kummer sets.
\end{abstract}


\maketitle

\section{Introduction}
$A$ always denotes a simple finite dimensional algebra of degree $n$ (i.e., dimension~$n^2$) with
center $F$, i.e., a central simple algebra $A/F$, and $K =
F[\theta]$ is a given maximal subfield of $A$ that is a separable
extension of $F$. Recall by the Koethe-Noether-Jacobson Theorem
\cite[Theorem~VII.11.1]{J}, \cite{Jac} that, for $A$ a division
algebra, the set of such $\theta\in A$ is Zariski dense in $A$.
The overall goal of this research is to investigate the internal
structure of $A$ in terms of $K \subset A$ and an element $a\in
A\setminus K$.

Notice that assuming $A$ is a division algebra is much too strong for the density statement. It
is clear, for example, that such a field $K$ exists if $F$ has the
property that every finite extensions $F'$ of $F$ has a separable
field extension of any degree. However, we will avoid these
technicalities by simply assuming, in the whole paper, that $K$ is a
maximal separable subfield of $A$.

 We start by reviewing the well-known fact that
there are $v \in A$ such that $A = K v K$ and in fact $A = \sum
_{i,j=0}^{n-1} F\theta^i v \theta^{n-1-i}$ for suitable $v\in K$,
and conversely, starting with any $v$, the set of $\theta$ for which
 $A = K v K$ is Zariski dense in $A$. Likewise, starting with $K$,
 the set of $v$ for which
 $A = K v K$ is Zariski dense in $A$. However, the situation can differ for $KaK$ for arbitrary
$a\in A$, which is the subject of our paper.

We are interested in those
$a\notin K$ for which $KaK \ne A,$ since they may permit us to
obtain more information about $A$. In this case we are also interested
in examining $(KaK)^m$ for each $m\ge 1$. Another focus
of this paper is spaces of Kummer elements. We say $a \in K$
is Kummer if and only if the characteristic polynomial of $a$ has the form $x^n - d$. We say a subspace $V \subset A$ is Kummer if and only
if all $a \in V$ are Kummer.

One extreme case, since $KaK$ must contain $aK$, is that $KaK = aK$.
Then $Ka = aK$. Since $K$ is a field,
by Lemma \ref{invelem}, we can conclude that
$a$ is invertible and $aKa^{-1} = K$.
Some equivalent conditions in terms of traces are given in
Theorem~\ref{Kum1}, which shows that if  $Ka$
is Kummer ($v^n \in F$ for all $v \in Ka$), then $KaK$ is also
Kummer. 
(We utilize characteristic free techniques, developed in Section~\ref{sec:7}, which are of independent interest).

We show that the sequence
$\dim _K(KaK)^j:\ j =1,2,\dots$ must stabilize at some $m \le n$.
Moreover, the sequence $(KaK)^j$ terminates in a finite cycle and
we can characterize the resulting subspaces.

This is tied in with the behavior of the products
$K(aKa^{-1})(a^2Ka^{-2})\cdots$ of fields conjugate to $K$, since
$$KaKa^{-1}(a^2Ka^{-2}) = KaKaKa^{-2} = (KaK)(KaK)a^{-2}$$ (and
also for longer products).

Our first main idea is that this question can be studied
ring-theoretically. $A$ is a $K - K$ bimodule (i.e.
a $K \otimes_F K$ module) and $(KaK)^m$ is a submodule,
where the first $K$ acts by
multiplication on the left and the second as multiplication on the
right. Since we show $A = KrK$ for some $r$
we have that $A \cong K \otimes_F K$ as $K$ - $K$ bimodules.

This sets the tone of our paper, in which we explore first
what one can obtain using the bimodule structure, before studying
how they interact in the multiplication of $A$.

Writing $f$ for the minimal polynomial of $\theta$ over $F$, we
can factor $f$ over $K$ and have $$f(x) = (x - \theta)f_1(x)\cdots f_s(x).$$
Thus, $K \otimes_F K$ is a semisimple ring, a direct sum of fields $K_0 \oplus K_1
\oplus \cdots \oplus K_s$ where $K_0 = K$ and $K_i =
K[x]/K[x]f_i(x)$ for $1 \le i \le s$. As a bimodule $A$ is semisimple, i.e., is
a finite direct sum of simple submodules. This gives us precisely
the description of all sub-bimodules of $A$ (Remark~\ref{doublecorr}).
Of course, the $(KaK)^m$ for each $m$ are such sub-bimodules.

Furthermore, we can describe these $K_i$ in terms of the Galois
group $G$ of the Galois closure $L/F$, generated over $F$ by the
roots of $f(x)$. $G$ acts on these roots, enabling us to translate
the theory to double cosets of $G$. Let $H \subset G$ be the Galois
group of $L/K$. We call a subset $S \sub G$ an {\bf{$H$-biset}}, if
it is closed under multiplication by $H$ from left and right. Then
(Corollary~\ref{doublecorr}) there is a 1:1 correspondence between
sub-bimodules of $K \otimes_F K$ and $H$-bisets $S \sub G$, which is
extended to an isomorphism of semirings in Theorem~\ref{main}. This
valuable tool enables us to relate the algebraic structure of $A$ to
the growth of the series $\dim _K (KaK)^j$.

To relate these sub-bimodules to the multiplication in~$A$,
we embed $A$ into
$\bar A = A \otimes _F L \cong \M[n](L)$. Then $\M[n](L)$
has an etale maximal subring $\bar K = K \otimes_F L$.
After tensoring by $L$, sub-bimodules of $A$ become $\bar K$--$\bar K$
sub-bimodules of $\M[n](L)$. These later sub-bimodules are described in terms
of matrix units and this is a powerful tool. 

Another approach to studying $A$ in terms of $KaK$ is by means of
Brauer factor sets, cf.~\cite{Jac3}, and the corresponding
description of $A$ as matrices of $\M[n](\bar K)$. Now $G$ acts
naturally on the indices of the entries of the matrices, and $K$
corresponds to diagonal matrices. When $KaK \ne A$ this matrix
description involves entries which are 0.

\section{Writing $A = KaK.$}

The fact that we can write $A = KaK$ is known, cf.~\cite{A},
\cite[Theorem~VII.3]{J}, \cite{J1}, and~\cite{G}. Let us provide the
quick argument.

We recall from \cite[Theorem 1.4.34]{Row1} that over any field $F$,
the Capelli polynomial
$$c_{n^2}(x_1, \dots, x_{n^2},y_1, \dots, y_{n^2})$$ has the
property of vanishing whenever $x_1, \dots, x_{n^2}$ are specialized
to sets of matrices that do not span $\M[n](F),$ but does not vanish
when
 $x_1, \dots, x_{n^2}$ and $y_1, \dots, y_{n^2}$ each are specialized to sets of matrices that do span $\M[n](F).$ Define $\tilde x_{ni+j+1} = y^i x y^j$ for $0 \le i,j \le n-1$, and
$$\tilde c(x,y) = c_{n^2}(\tilde x_1, \dots, \tilde x_{n^2},\tilde x_1, \dots, \tilde x_{n^2}).$$

\begin{lem}\label{polysw}
$A$ is spanned by $S = \{\theta^i a \theta^j: 0 \le i,j \le
n-1\}$ iff $\tilde{c}(a,\theta) \neq 0$.
\end{lem}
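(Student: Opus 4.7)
The plan is to invoke the cited property of the Capelli polynomial in both directions of the ``iff'', after reducing to the split case. Write $S = \{\tilde{x}_k\}_{k=1}^{n^2}$ where $\tilde x_{ni+j+1} = \theta^i a\theta^j$. Both of the conditions ``$S$ spans $A$ over $F$'' and ``$\tilde c(a,\theta)=0$'' are preserved under faithfully flat base change (spanning an $n^2$-dimensional $F$-space and vanishing of a single element of $A$ each translate to the corresponding statement over any extension of $F$). So one may tensor with a splitting field $L/F$ and assume $A = \M[n](F)$ from the outset.

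Now the two directions are essentially direct quotations of the Capelli properties recalled just before the lemma. For the forward direction (contrapositive of $\Leftarrow$), if $S$ fails to span $\M[n](F)$, then specializing the $x$-slots of $c_{n^2}$ to the non-spanning tuple $(\tilde x_1,\ldots,\tilde x_{n^2})$ forces $c_{n^2}$ to vanish for \emph{every} choice of $y$'s; in particular, choosing $y_i=\tilde x_i$ gives $\tilde c(a,\theta)=0$. For the reverse direction, if $S$ spans $\M[n](F)$, then both the $x$-slot tuple $(\tilde x_1,\ldots,\tilde x_{n^2})$ and the $y$-slot tuple $(\tilde x_1,\ldots,\tilde x_{n^2})$ are spanning specializations, so the non-vanishing half of the Capelli property yields $\tilde c(a,\theta)\neq 0$.

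The only delicate point, and the main thing to verify against \cite[Theorem~1.4.34]{Row1}, is that the Capelli non-vanishing statement is robust under taking the $x$-slot and $y$-slot tuples to be \emph{the same} spanning sequence, rather than requiring the two spanning sequences to be independently chosen. This is a feature of the specific form of $c_{n^2}$ used there (it is multilinear alternating in the $x$'s and multilinear in the $y$'s, and non-vanishing on any pair of spanning specializations), so once that is confirmed, the two-line argument above suffices and no further computation is needed.
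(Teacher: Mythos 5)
Your proof is correct and follows the same route as the paper, which in fact dispenses with the lemma in a single line (``an immediate consequence of the previous paragraph''); you simply spell out the two details the paper leaves implicit, namely the faithfully-flat reduction to the split case and the observation that the non-vanishing half of the Capelli property applies when the $x$-slots and $y$-slots are filled with the same spanning tuple.
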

\begin{proof}
An immediate consequence of the previous paragraph.
\end{proof}

\begin{rem}\label{passup}
Lemma~\ref{polysw} transfers the condition of generation over the
center to a criterion about polynomial identities (or generalized
identities), which for algebras over infinite fields is known to
pass to tensor extensions (cf.~\cite[Corollary 2.3.32]{Row1}), and
being a non-identity, is a Zariski open condition on $a$ when we fix
$\theta$. Namely, if the $\theta^i a \theta^j$ generate $L
\otimes A$ for some field $L$, then the generalized polynomial
$\tilde c(x,\theta) $ is not a generalized identity of $L \otimes
A,$ and thus not a generalized identity of $A$, implying that the $\theta^i
 a \theta^j$ generate $A$ for a Zariski open subset of $A$.
 \end{rem}

\begin{prop}\label{A=KaK}
Let $K/F$ be a separable maximal subfield of $A$. Let $\theta$ be a
generator of $K/F$. Then there are elements $a \in A$ such that $A$
is spanned by the elements $\theta^i a \theta^j$, $i,j =
0,\dots,n-1$, and for $F$ infinite, the set of such elements $a \in
A$ is Zariski dense in $A$.
\end{prop}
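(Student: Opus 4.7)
The plan is to combine Lemma~\ref{polysw} and Remark~\ref{passup} with an explicit construction in the split case. By \Lref{polysw}, the desired spanning condition is exactly $\tilde c(a,\theta) \ne 0$, so it suffices to produce a single element $a$ (possibly after an extension of scalars) for which $\tilde c(a,\theta) \ne 0$; then \Rref{passup} descends this to $A$ and promotes it to a Zariski-open (hence, for infinite $F$, Zariski-dense) condition.

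First I would pass to a splitting field $L/F$ of $A$, so that $\bar A = A \otimes_F L \cong \M[n](L)$. Since $K/F$ is separable of degree $n$, the image of $\theta$ in $\bar A$ is diagonalizable with $n$ distinct eigenvalues $\lambda_1,\dots,\lambda_n \in \bar F$; after conjugating I may assume $\theta = \diag(\lambda_1,\dots,\lambda_n)$. The task then reduces to exhibiting a single $a \in \M[n](L)$ with $\{\theta^i a\theta^j : 0 \le i,j \le n-1\}$ spanning $\M[n](L)$.

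The natural candidate is the all-ones matrix $a = \sum_{k,l} E_{kl}$. A direct computation gives
\[
\theta^i a \theta^j \;=\; \sum_{k,l} \lambda_k^i \lambda_l^j E_{kl},
\]
so expressing the $n^2$ elements $\theta^i a \theta^j$ in the basis $\{E_{kl}\}$ yields the $n^2 \times n^2$ coefficient matrix $M_{(i,j),(k,l)} = \lambda_k^i\lambda_l^j$, which is the Kronecker product $V \otimes V$ of the Vandermonde matrix $V_{ik} = \lambda_k^i$ with itself. Since the $\lambda_k$ are distinct, $V$ is invertible, hence so is $V \otimes V$; thus the $\theta^i a \theta^j$ span $\M[n](L)$, and by \Lref{polysw} applied to $\bar A$, $\tilde c(a,\theta) \ne 0$ in $\bar A$.

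Finally I would invoke \Rref{passup}: since $\tilde c(x,\theta)$ is not a generalized identity of $A \otimes_F L$, it is not a generalized identity of $A$, so there exists $a \in A$ with $\tilde c(a,\theta) \ne 0$, and when $F$ is infinite, the set of such $a$ is a nonempty Zariski open subset of $A$, hence Zariski dense. The only nontrivial step is the Vandermonde computation; the rest is bookkeeping around the Capelli criterion. The mild subtlety to watch is that $a$ is constructed in $\bar A$ rather than in $A$, but Remark~\ref{passup} is designed precisely to handle this passage.
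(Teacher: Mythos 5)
Your argument is essentially the same as the paper's in spirit: pass to an extension where $\theta$ is diagonal, show that the translates $\theta^i a \theta^j$ span for a concrete choice of $a$ (you take the all-ones matrix and give a Vandermonde argument; the paper observes that any $a$ with no zero entries works because $\bar K \otimes \bar K$ is the full diagonal), and then descend via Remark~\ref{passup}. Two points to fix, one minor and one genuine.

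Minor: a splitting field of $A$ need not contain the eigenvalues of $\theta$ (e.g.\ $A = \M[n](F)$ is already split but the roots of $f$ may live in a proper extension). You should take $L$ to contain the Galois closure of $K/F$ (which, being degree $n$ over $F$, automatically splits $A$), so that $\theta$ is actually diagonalizable over $L$ as you assume.

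Genuine gap: the Proposition asserts the \emph{existence} of a suitable $a \in A$ for arbitrary $F$, and only restricts to $F$ infinite for the Zariski-density statement. Your descent step rests on Remark~\ref{passup}, whose content — that a non-identity of $L \otimes_F A$ remains a non-identity of $A$, equivalently that generalized identities of $A$ persist under scalar extension — is stated there only for infinite base fields, and indeed can fail over finite fields. So your argument establishes the Proposition only when $F$ is infinite. The paper therefore runs a separate argument for finite $F$: there $K/F$ is cyclic, $A$ is necessarily split and may be presented as a cyclic algebra $\sum Kz^i$ with $zKz^{-1} = K$; embedding $A \hra \M[n](K)$ with $K$ diagonal, any element $\sum \alpha_i z^i$ with all $\alpha_i \ne 0$ has all matrix entries nonzero and so satisfies $KaK = A$. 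You should append such an argument (or some other one that avoids the GPI descent) to cover $F$ finite.
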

\begin{proof}
Since $K/F$ is separable, we can write $K = F[\theta]$ where
$\theta$ is a separable element. First suppose that $F$ is infinite.
 Passing to $K \tensor A$, we may
diagonalize $1 \otimes b$, and thus identify $1 \otimes K$ with
 diagonal matrices. Hence $K \otimes K$ is all of the diagonal. For any $a \in A$,
 we write $1 \otimes a$ as the matrix $\sum _{i,j} a_{ij}e_{ij}$.
 Then $(K \otimes K)(1 \otimes a)(K \otimes K)$
 is spanned by the matrices
$a_{ij}e_{ij}$, so if $1 \otimes a$ has no zero entries we obtain
$(K \otimes K)(1 \otimes a)(K \otimes K)= K \tensor A$.

This shows that the set of $a$ for which $(K\tensor K)a(K\tensor K) = K\tensor A$ is Zarisky dense. By Remark~\ref{passup}, the set of suitable $a$ is Zariski dense in $A$. For any such $a,$
$$\dim _F (KaK) = \dim _K \left((K \otimes K)(1 \otimes a)(K \otimes
K)\right) = n^2,$$ implying $KaK = A, $ as desired.

If $F$ is finite, then $A$ already is a matrix algebra, and $K/F$
is necessarily cyclic. Hence, the matrix algebra $A$ can be
presented as a cyclic algebra $\sum Kz^i$ where $zKz^{-1} = K$.
Considering $A$ as a subalgebra of $\M[n](K)$, and conjugating so
that $K$ is the diagonal, every element of the form $\sum \alpha_i
z^i$ with nonzero coefficients has non-zero entries as a matrix.
\end{proof}

\section{Bimodule decomposition of $KaK$}

We fix a central simple algebra $A$ and a separable maximal subfield $K$.

\subsection{$A$ as a bimodule}$ $

The algebra $A$ has the structure of a $K$--$K$ bimodule, where
the first $K$ acts by multiplication on the left and the second as
multiplication on the right. Of course this is the same as saying
that $A$ is a module over $K \otimes_F K$. Moreover, since by \Pref{A=KaK} there is $a \in A$ such that $A =
KaK$, we conclude:

\begin{lem}\label{L1}
 $A \cong K \otimes_F K$ as $K$--$K$ bimodules.
\end{lem}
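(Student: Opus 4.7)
The plan is to exhibit an explicit $K$--$K$ bimodule homomorphism $\phi\co K\otimes_F K \to A$ and show it is an isomorphism by a dimension count combined with the surjectivity coming from \Pref{A=KaK}.

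First I would fix an element $a\in A$ with $A = KaK$, whose existence is guaranteed by \Pref{A=KaK}. Then I would define
$$\phi\co K\otimes_F K \to A, \qquad \phi(x\otimes y) = xay.$$
This map is $F$-bilinear: the fact that $F$ is the center of $A$ and is contained in $K$ ensures that $x(\alpha a)y = (x\alpha)ay = xa(\alpha y)$ for $\alpha\in F$, so the map descends from the Cartesian product to the tensor product $K\otimes_F K$. By construction $\phi$ intertwines the left and right $K$-actions: for $u,v\in K$ we have $u\cdot\phi(x\otimes y)\cdot v = uxayv = \phi(ux\otimes yv)$, so $\phi$ is a $K$--$K$ bimodule map.

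Surjectivity is immediate: the image of $\phi$ is exactly $KaK$, which equals $A$ by the choice of $a$. For injectivity I would appeal to a dimension count over $F$: since $[K:F]=n$ we have $\dim_F(K\otimes_F K) = n^2 = \dim_F A$, so the surjection $\phi$ is forced to be an isomorphism.

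There is essentially no serious obstacle here; the only point requiring a sliver of care is checking that $\phi$ is well defined on the tensor product (i.e., that $F$ acts the same way on both tensor factors through $a$), which follows because $F$ is central in $A$ and contained in $K$. Everything else is a formal consequence of \Pref{A=KaK} and matching dimensions.
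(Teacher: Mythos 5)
Your argument is correct and is exactly what the paper intends: fix $a$ with $A = KaK$ via \Pref{A=KaK}, observe that $x\otimes y\mapsto xay$ is a well-defined $K$--$K$ bimodule map, and conclude it is an isomorphism by comparing $F$-dimensions. The paper leaves this implicit (it simply cites \Pref{A=KaK} and states the conclusion), so your write-up is the same proof spelled out.
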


Note that since $K/F$ is separable, $K \otimes_F K$ is a commutative
semisimple algebra with all irreducibles appearing with multiplicity
one, and thus can be written uniquely as a direct sum of simple
modules. This implies that:

\begin{lem}
Any two sub-$K$--$K$-bimodules of $A$ that are isomorphic as
bimodules are equal as subsets.
\end{lem}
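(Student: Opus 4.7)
The plan is to reduce the statement to the structure theory of the commutative semisimple algebra $K\otimes_F K$. By \Lref{L1}, $A \cong K\otimes_F K$ as $K$--$K$-bimodules, so sub-$K$--$K$-bimodules of $A$ correspond bijectively with sub-$K$--$K$-bimodules of $K\otimes_F K$, which are precisely the ideals of $K\otimes_F K$ (i.e.\ its submodules over itself). The statement to prove thus becomes: two ideals of $K\otimes_F K$ that are isomorphic as $K\otimes_F K$-modules coincide as subsets.

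Next I invoke separability of $K/F$, which forces $K\otimes_F K$ to be \'etale, hence a finite product of fields, written as $K_0 \oplus K_1 \oplus \cdots \oplus K_s$ in the paragraph preceding the lemma. The ideals of such a product are exactly the subsums $\bigoplus_{i\in I} K_i$ for subsets $I \subseteq \{0,1,\dots,s\}$, and each summand $K_i$ is a simple $K\otimes_F K$-module. The key point is that distinct $K_i$ are pairwise non-isomorphic as $K\otimes_F K$-modules, since the annihilator of $K_i$ is the complementary ideal $\bigoplus_{j\neq i}K_j$, and these annihilators separate the indices.

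Consequently, any ideal $J = \bigoplus_{i\in I} K_i$ decomposes into simples with multiplicity at most one in each isomorphism type, so its isomorphism class recovers the set $I$, hence the subset $J$ itself. Transporting back via the isomorphism $A \cong K\otimes_F K$ then gives the lemma.

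The only subtle point --- and the main (minor) obstacle --- is justifying that $K\otimes_F K$ really decomposes as a \emph{product of fields} rather than an arbitrary commutative semisimple ring; separability of $K/F$ is precisely what ensures this, through the factorization of the minimal polynomial of $\theta$ over $K$ into distinct factors, giving $K\otimes_F K \cong K[x]/(f(x)) \cong \prod_i K[x]/(f_i(x))$ with the $f_i$ pairwise coprime.
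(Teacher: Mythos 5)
Your proof is correct and follows essentially the same route as the paper: both rely on $K\otimes_F K$ being a product of fields (by separability), with the simple summands pairwise non-isomorphic, so the set of simple constituents of any sub-bimodule is read off from its isomorphism class. You have merely made explicit (via the annihilator argument) the paper's assertion that "all irreducibles appear with multiplicity one."
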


In other words, the isomorphism type determines the submodule as a
subset. Also note that since $K \otimes_F K$ is semisimple, all
submodules are cyclic. From this we get:

\begin{lem}
The possible $KvK \subseteq A$, ranging over $v \in A$, are exactly the $K$--$K$
submodules of $A$.\end{lem}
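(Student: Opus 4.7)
The plan is straightforward. One direction is immediate: for any $v \in A$, the set $KvK$ is by construction closed under multiplication by $K$ on either side, hence is a $K$--$K$ sub-bimodule of $A$. The content of the lemma is therefore the converse, namely that every $K$--$K$ sub-bimodule $M \subseteq A$ can be written as $KvK$ for some $v \in A$.

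I would establish this by invoking \Lref{L1} to identify $A$ with $R := K \otimes_F K$ as $K$--$K$ bimodules. Under this identification, the combined left and right actions of $K$ amount to the regular action of the ring $R$ on itself (with $a \otimes b$ sending $v$ to $(a \otimes b)v$), so $K$--$K$ sub-bimodules of $A$ correspond bijectively with ideals of $R$, and a set of the form $KvK$ corresponds to the principal ideal $Rv$ generated by the image of $v$.

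Since $K/F$ is separable, $R$ is commutative and semisimple, and the decomposition $R = K_0 \oplus K_1 \oplus \cdots \oplus K_s$ into fields recalled in the introduction shows that every ideal of $R$ has the form $\bigoplus_{i \in T} K_i$ for some subset $T \subseteq \{0, 1, \dots, s\}$. Letting $e_i$ denote the unit of $K_i$, any such ideal is principal, generated by the idempotent $e_T := \sum_{i \in T} e_i$. Taking $v \in A$ to be the preimage of $e_T$ under the bimodule isomorphism yields the desired presentation $M = KvK$. The argument has essentially no obstacle; the entire content is the translation through \Lref{L1} of the elementary fact that ideals in a finite product of fields are principal, and the only subtlety worth flagging is that the bimodule structure on $A$ matches the full ring multiplication on $R$ (not merely a one-sided module structure), so that $KvK \leftrightarrow Rv$ really is a principal ideal rather than a submodule of a single regular representation.
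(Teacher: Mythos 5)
Your proof is correct and takes essentially the same route as the paper: both reduce to the observation, via \Lref{L1}, that $A$ is isomorphic to $K\otimes_F K$ as a module over itself, and that this ring is commutative semisimple, hence all its submodules (ideals) are cyclic. You simply make explicit what the paper compresses into the sentence ``since $K\otimes_F K$ is semisimple, all submodules are cyclic,'' by exhibiting the idempotent generator $e_T$.
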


Following \Lref{L1}, we study the bimodule decomposition of $K \otimes_F K$. Since $K/F$ is separable,
\begin{equation}\label{basiK}
K \cong F[x]/F[x]f(x)\end{equation} for an irreducible polynomial $f(x)$ over $F$.
The image of $x$ defines a canonical root $\theta \in K$ of $f(x)$,
and we have an irreducible decomposition over $K$, $f(x) = (x - \theta)f_1(x)\cdots f_s(x)$.

Extending scalars in \eq{basiK}, we have $K \otimes_F K \cong K[x]/K[x]f(x)$, where
$\theta \tensor 1 \mapsto \theta$ and $1 \tensor \theta \mapsto x$.
Thus:

\begin{lem}\label{lem4}
$K \otimes_F K$ is a direct sum of fields $K_0 \oplus K_1
\oplus \cdots \oplus K_s$ where $K_0 = K$ and $K_i = K[x]/K[x]f_i(x)$ for $1 \le i
\le s$. Each $K_i$ is an irreducible $K$--$K$ bimodule.
\end{lem}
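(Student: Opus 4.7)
The plan is to deduce the lemma directly from the identification $K \tensor_F K \cong K[x]/K[x]f(x)$ established in the paragraph immediately preceding the lemma, combined with the factorization $f(x) = (x-\theta)f_1(x)\cdots f_s(x)$ over $K$ and the Chinese Remainder Theorem.

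First, I would observe that since $\theta$ is separable over $F$, its minimal polynomial $f(x)$ has distinct roots in any splitting field, so the irreducible factors $(x-\theta), f_1(x), \ldots, f_s(x)$ in $K[x]$ are pairwise coprime. Then CRT gives
\[
K[x]/K[x]f(x) \;\cong\; K[x]/K[x](x-\theta) \,\oplus\, K[x]/K[x]f_1(x) \,\oplus\, \cdots \,\oplus\, K[x]/K[x]f_s(x),
\]
and under the isomorphism with $K \tensor_F K$ the first summand is $K[x]/(x-\theta) \cong K$, which I identify with $K_0$. Each remaining summand $K_i = K[x]/K[x]f_i(x)$ is a field because $f_i(x)$ is irreducible in $K[x]$.

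Next I would verify irreducibility of each $K_i$ as a $K$--$K$ bimodule. A $K$--$K$ sub-bimodule of $K_i$ is the same as a $K \tensor_F K$-submodule, which under the isomorphism with $K[x]/K[x]f(x)$ is an ideal of $K_i$ (since the $K \tensor_F K$-action is given by the full ring structure: left multiplication by $\theta \tensor 1$ acts as multiplication by $\theta$, and right multiplication by $1 \tensor \theta$ acts as multiplication by the class of $x$). Because $K_i$ is a field, the only such ideals are $0$ and $K_i$ itself, so $K_i$ is irreducible.

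The only mild subtlety I anticipate is bookkeeping: making sure the $K$--$K$ bimodule structure on the quotients really does coincide with the $K_i$-multiplication used when invoking the field property. The standard way to handle this is to trace through the isomorphism $K \tensor_F K \cong K[x]/K[x]f(x)$ sending $\theta \tensor 1 \mapsto \theta$ and $1 \tensor \theta \mapsto x$, and note that every element of $K \tensor_F K$ is a $K$-linear combination of elements $1 \tensor \theta^j$, so the action of $K \tensor_F K$ coincides with multiplication inside the quotient ring. Once this identification is made, everything else is immediate from CRT and the field structure.
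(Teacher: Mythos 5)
Your proof is correct and follows exactly the route the paper takes: the identification $K \otimes_F K \cong K[x]/K[x]f(x)$, separability giving pairwise coprime irreducible factors, the Chinese Remainder Theorem for the decomposition, and the observation that sub-bimodules are ideals to get irreducibility. The paper leaves this as an immediate consequence of the preceding paragraph, and your write-up is simply that argument spelled out.
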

In other words, the $K_i$ are precisely the irreducible
$K$--$K$ sub-bimodules of $A$.
Passing to $A$, there are $v_0,\dots,v_s$ such that $A = \bigoplus Kv_iK$, and for each $i$, $Kv_iK \isom K_i$ as $K$-$K$-bimodules.

\subsection{Galois structure of $K/F$}$ $

We can think about the $K_i$ of \Lref{lem4} in terms of the Galois group of $K$.
Explicitly, let $L/F$ be the Galois closure of $K/F$, so $L$ is generated
over $F$ by the roots of $f(x)$. Let $\theta_i$ be a
root of $f_i$, so $K_i = K(\theta_i)$. Then the projection $K \otimes_F K \to K_i$ can be defined by
$\theta \otimes 1 \mapsto \theta$ and $1 \otimes \theta \mapsto \theta_i$.

Let $G$ be the Galois group of $L/F$ and $H \subset G$ the
Galois group of $L/K$. That is, viewing $G$ as a permutation
group on the roots of $f(x)$, $H$ is the stabilizer of~$\theta$.
Another way of saying this is that the set of right cosets $G/H =
\{gH \,|\, g \in G\}$ corresponds to the embeddings $K
\hookrightarrow L$, where $gH$ corresponds to the embedding
defined by $\theta \mapsto g(\theta)$. It then follows that the
roots of each $f_i(x)$ (including $f_0(x) = x - \theta$) are orbits
of $H$ with respect to the action of $H \sub G$ on the roots of $f(x)$.

It is useful to get away from relying on a specific choice of
polynomial, which now is easy. Let $\Theta$ denote the set of roots
of $f(x)$ in $L$, which is isomorphic as a $G$-set to $G/H$
(via $g \mapsto g(\theta)$) and thus to the set of embeddings $K
\hra L$ (where $G$ acts via left composition).

The orbits of $H$ on $\Theta$ are the roots of each $f_i$;
the orbits of $H$ on the embeddings are the embeddings of $K$ into $K_i$; and clearly, the orbits of $H$ on $G/H$ are the double
cosets $HgH$, $g \in G$. This gives a correspondence between double cosets $HgH$
and the $K_i$, given by $\theta_i = g(\theta)$, and hence on the simple direct summands of $K \tensor K$. Note that this is really
independent of the choice of $f$. Indeed, $K_i$ is isomorphic to the subfield of $L$ generated by $K$ and
$g(K)$.
Therefore, the subfield $K(g(\theta))$ corresponds to the double coset $HgH$, and we denote this field, up to $K$-isomorphism, as $K_{[g]}$, as writing the double coset in a subscript seems unwise.

We have shown:
\begin{lem}\label{doublecoset}
The simple direct summands of $K \otimes_F K$ are in one to one
correspondence with the double cosets $HgH$ of $H$ in $G$, given by
$HgH \mapsto K_{[g]}$.
\end{lem}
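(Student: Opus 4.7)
The plan is to combine \Lref{lem4}, which writes $K \otimes_F K = K_0 \oplus K_1 \oplus \cdots \oplus K_s$ with $K_i = K[x]/K[x]f_i(x)$ indexed by the irreducible factors of $f(x)$ over $K$, with the Galois-theoretic description of those factors. In effect, I want to show that the index set of irreducible factors $f_i$ is in natural bijection with $H \backslash G / H$, and then identify the corresponding summand with $K_{[g]}$.

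First I would check that the root sets of the $f_i$ in $L$ are exactly the $H$-orbits on $\Theta$: since $L$ is the splitting field of $f$, each $f_i$ splits over $L$; as $f_i$ is irreducible over $K$ and $H = \Gal(L/K)$, its roots form a single $H$-orbit, and conversely the minimal polynomial over $K$ of any root in an $H$-orbit has precisely that orbit as its root set. This sets up a bijection between $\{f_i\}$ and the $H$-orbits on $\Theta$.

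Next, via the $G$-equivariant identification $\Theta \cong G/H$ given by $g(\theta) \leftrightarrow gH$, the $H$-orbits on $\Theta$ correspond to $H$-orbits on $G/H$, which in turn are precisely the double cosets $HgH$ (the orbit $H \cdot gH$ in $G/H$ lifts to $HgH$ in $G$). Composing, the irreducible factors of $f$ over $K$ are parametrized by $H \backslash G / H$, with the double coset $HgH$ corresponding to the factor whose roots are $\{hg(\theta) : h \in H\}$.

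Finally I would identify $K_i$ with $K_{[g]}$. If $\theta_i := g(\theta)$ is a chosen root of the factor $f_i$ attached to $HgH$, then $f_i$ is the minimal polynomial of $\theta_i$ over $K$, so the evaluation map $K[x] \to L$, $x \mapsto \theta_i$, descends to an isomorphism $K_i = K[x]/K[x]f_i(x) \isom K(\theta_i) = K_{[g]}$. It remains to verify that $K_{[g]}$ depends only on the double coset (replacing $g$ by $hgh'$ with $h,h' \in H$ yields the same subfield of $L$, since $h,h'$ fix $K$ pointwise) and that $HgH \mapsto K_{[g]}$ is a bijection, both immediate from the previous paragraph. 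The only mildly delicate point is keeping straight that the abstract summand $K_i$ of \Lref{lem4} and the concrete subfield $K_{[g]} \subseteq L$ are genuinely $K$-isomorphic rather than only abstractly so; the explicit evaluation map above handles this cleanly, so I expect no real obstacle.
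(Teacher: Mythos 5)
Your proof follows essentially the same approach as the paper: starting from the decomposition of $K \otimes_F K$ in Lemma~\ref{lem4}, identifying the roots of each $f_i$ with an $H$-orbit on $\Theta \cong G/H$, and then translating $H$-orbits on $G/H$ into double cosets $HgH$, with the identification $K_i \cong K(g(\theta)) = K_{[g]}$. This matches the paper's argument in structure and content.
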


For future use, we generalize this correspondence to all the
sub-bimodules. We call a subset $S \sub G$ an {\bf{$H$-biset}}, if
it is closed under multiplication by $H$ from left and right. In
other words, an $H$-biset is the union of double cosets of $H$.
\begin{cor}\label{doublecorr}
There is a one to one correspondence between sub-bimodules of $K \otimes_F K$ and $H$-bisets $S \sub G$, given by $S \mapsto \sum_{HgH \sub S} K_{[g]}$.
\end{cor}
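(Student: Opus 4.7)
The plan is to assemble this corollary directly from \Lref{lem4} and \Lref{doublecoset}, together with the semisimplicity observation made right after \Lref{L1}. All the ingredients are in place; the work is purely to organize them into the stated bijection.

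First I would observe that, by \Lref{lem4}, $K \otimes_F K = K_0 \oplus K_1 \oplus \cdots \oplus K_s$ is a direct sum of its irreducible $K$--$K$ sub-bimodules, and these irreducibles are pairwise non-isomorphic as bimodules (each simple factor appears with multiplicity one, since $K \otimes_F K$ is a direct sum of distinct fields). Combined with the fact noted right after \Lref{L1}, that isomorphic sub-bimodules coincide as subsets, this forces every sub-bimodule $M \sub K \otimes_F K$ to be the direct sum of exactly those $K_i$ it contains. Equivalently, since $K \otimes_F K$ is a commutative semisimple algebra (indeed a product of fields), sub-bimodules coincide with ring-theoretic ideals, which are precisely sums of subsets of the simple factors. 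Either way, the assignment $T \mapsto \bigoplus_{i \in T} K_i$ gives a bijection from subsets $T \sub \set{0, 1, \dots, s}$ onto sub-bimodules.

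Next I would invoke \Lref{doublecoset}, which provides a bijection between the index set $\set{0, 1, \dots, s}$ and the set of double cosets $HgH$ of $H$ in $G$: the summand $K_i$ corresponds to the double coset containing any $g$ with $\theta_i = g(\theta)$, i.e., $K_i = K_{[g]}$. Finally, by definition an $H$-biset is a union of double cosets, so the $H$-bisets in $G$ are in bijection with subsets of the set of double cosets, via $S \mapsto \set{HgH : HgH \sub S}$. Composing the three bijections yields the claimed correspondence, and the explicit formula $S \mapsto \sum_{HgH \sub S} K_{[g]}$ is a direct reading of the composition.

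There is no substantive obstacle here; the only point to verify carefully is that the simples $K_0, \dots, K_s$ are pairwise non-isomorphic as bimodules, so that ``the summands appearing in a given sub-bimodule'' is well defined. But this is guaranteed precisely by the fact, recorded in \Lref{lem4}, that $K \otimes_F K$ decomposes as a direct sum of \emph{distinct} fields.
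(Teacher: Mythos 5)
Your proof is correct and is exactly the argument the paper leaves implicit: decompose $K \otimes_F K$ into its multiplicity-free simple summands (Lemma~\ref{lem4}), note that consequently every sub-bimodule is the sum of the $K_i$ it contains, and compose with the bijection of Lemma~\ref{doublecoset} between the $K_i$ and double cosets, recalling that $H$-bisets are by definition unions of double cosets. One small caution on phrasing: the $K_i$ may well be isomorphic \emph{as fields} (e.g.\ when $K/F$ is Galois), so the relevant point is that they are pairwise non-isomorphic as $K\otimes_F K$-modules because they are distinct simple ring factors, which is what the paper's remark after Lemma~\ref{L1} records.
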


Lemma \ref{doublecoset} implies, for example, the following
observation (for $s=2$):

\begin{rem}\label{dt}
 $K \otimes_F K = K \oplus K_1$, the sum of two
fields, if and only if $G$ acts doubly transitively on the roots of
$f(x)$.
\end{rem}

So, for example, if $K/F$ has degree $n$ and $G = S_n$ then $K
\otimes_F K = K \oplus K_1$.

Counting degrees in \Lref{doublecoset} we have:

\begin{cor} \label{order}
The dimension of $K_{[g]} \subset K \otimes_F K$
over $K$ is the quotient $|HgH|/|H|$. More generally the dimension
of the bimodule corresponding to any $H$-biset $S$
is~$\card{S}/\card{H}$.
\end{cor}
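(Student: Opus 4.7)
The plan is to combine the Galois-theoretic description already established in \Lref{doublecoset} and \Cref{doublecorr} with a direct orbit-counting argument on $G/H$.

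First I would compute $\dim_K K_{[g]}$. Recall from the discussion preceding \Lref{doublecoset} that under the $G$-equivariant bijection $\Theta \leftrightarrow G/H$ sending $g(\theta) \mapsto gH$, the roots of the irreducible factor $f_i(x)$ of $f(x)$ over $K$ with $g(\theta)$ as a root correspond to the $H$-orbit of $gH$ under left multiplication on $G/H$. That orbit is $\{hgH : h \in H\}$, which is precisely the image of the double coset $HgH$ in $G/H$, and therefore has cardinality $|HgH|/|H|$. Since $f_i$ is the minimal polynomial of $g(\theta)$ over $K$, we get $\deg f_i = |HgH|/|H|$, and then from $K_{[g]} \cong K[x]/K[x]f_i(x)$ (as in \Lref{lem4}) we conclude $\dim_K K_{[g]} = \deg f_i = |HgH|/|H|$.

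For the general statement, I would invoke \Cref{doublecorr}: the sub-bimodule $M_S$ corresponding to an $H$-biset $S$ is $M_S = \sum_{HgH \subseteq S} K_{[g]}$. This sum is direct because the $K_{[g]}$ are distinct simple summands of the semisimple ring $K \otimes_F K$ (\Lref{lem4}). Since $S$ is, by definition, the disjoint union of the double cosets it contains, summing the first claim yields
\[
\dim_K M_S \;=\; \sum_{HgH \subseteq S} \frac{|HgH|}{|H|} \;=\; \frac{|S|}{|H|}.
\]

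The argument is essentially a bookkeeping exercise built on the setup already in place, so there is no serious obstacle. The one point to state carefully is the identification of the $H$-orbit of $gH$ in $G/H$ (under left multiplication) with the image of $HgH$, which is what converts the orbit count into the double-coset size $|HgH|/|H|$.
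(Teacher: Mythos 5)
Your proof is correct and takes essentially the same approach as the paper's, which also identifies $\dim_K K_{[g]}$ with the length of the $H$-orbit on $G/H$ (equivalently, $\deg f_i$), namely $|HgH|/|H|$. You spell out the orbit-to-double-coset identification and the direct-sum bookkeeping for a general $H$-biset $S$ more explicitly than the paper's one-line argument, but the content is the same.
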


\begin{proof} The length of the orbits of $H$ on $G/H$ is exactly the
number of roots of the minimal polynomial $f_i$ of $g(\theta)$ over $K$. \end{proof}

The object $K_{[g]}$ is being viewed here in a number of ways, and we need to
describe them and keep them distinct. Of course we began by viewing $K_{[g]}$ as a submodule of
$K \tensor K$. Up to isomorphism, there is a unique
$K\tensor K$-module corresponding to $HgH$. However, being a simple module $K_{[g]}$ is $(K \tensor K)/M$
for a maximal ideal $M$, and then $K_{[g]}$ is a field, which by our description is
isomorphic to $Kg(K) \subset L$.
Equivalently, $K_{[g]} \cong L^{H(g)}$ where $H(g) = H \cap gHg^{-1}$. Note that the field structure
of $K_{[g]}$ does not determine $HgH$. For example, if $K/F$ is Galois
(so $H = (1)$) then all the $K_{[g]}$'s are isomorphic to $K$. More generally, for any $f \in G$,
$f(Kg(K)) \subset L$ is also isomorphic to $K_{[g]}$ and not equal to $Kg(K)$ unless $f$
is in the normalizer of $H(g)$. In other words,
knowing $K_{[g]}$ as a field DOES NOT uniquely define the bimodule
structure. To make $K_{[g]}$ a bimodule we need to define $(k \tensor k') \cdot a$ which amounts
to defining two embeddings $K \to K_{[g]}$ (actions of $K \tensor 1$ and $1 \tensor K)$
and if we, for the moment, identify $K_{[g]}$
with $Kg(K) \subset L$ then these two embeddings are the identity and $g$. It is natural,
whenever we view $K_{[g]}$ as a subfield of $L$, to choose the first embedding always to
be the identity. That is, we only consider subfields $f(Kg(K)) \subset L$ where $f = h \in H$.
When one changes $Kg(K)$ to $h(Kg(K))$, this is equivalent to changing
from $g$ to $hg$ in $HgH$, and in the original description of
$K_{[g]} = K_i = K[x]/(f_i),$ choosing a different root in $L$ of $f_i$. Note that
$h(Kg(K)) \not= Kg(K)$ in general. That is, adding the extra structure of fixing
$K \subset K_{[g]}$ still does not uniquely define $K_{[g]}$ as a subfield of $L$.

There are many bimodule surjections $K \tensor K \to K_{[g]}$, namely, one for
every generator of $K_{[g]}$, though they all have the same kernel.
However, there is a unique such surjection which is
a ring homomorphsm, namely, the one sending $1$ to $1$. We call
this map $\pi_{[g]}: K \tensor K \to K_{[g]}$. This map is hard to work with
because we do not have a fixed instantiation for $K_{[g]}$. Once we fix an embedding
$K_{[g]} \subset L$ (which is the identity on $K$) we have the composition
$K \tensor K \to L$, defined by $\theta \tensor 1 \mapsto \theta$ and $1 \tensor \theta \mapsto g(\theta)$, which depends on the choice of $gH \subset HgH$
(because the embedding $K_{[g]} \to L$ depends on $gH$) and so
we write this composition as $\pi_{gH}$. When we need to make it clear,
we set $K_{gH} = Kg(K)$ to be the specific intermediate subfield of $L/K$ isomorphic to
$K_{[g]}$. Thus it makes sense to write $\pi_{[g]}: K \tensor K \to K_{[g]}$
but $\pi_{gH}: K \tensor K \to K_{gH}$. Since $K \tensor K$ is semisimple,
$\pi_{[g]}$ splits. That is, there is a unique idempotent $e_{[g]} \in K \tensor K$ such that
$K_{[g]} \isom (K\tensor K)e_{[g]}$ as a bimodule;
clearly $\pi_{[g]}(e_{[g]}) = 1$ and $\pi_{[g]}$ restricts to an
isomorphism on $(K \tensor K)e_{[g]}$. The description of
$\pi_{gH}$ can be summarized by the diagram:

\begin{equation}\label{pii}
 \xymatrix{\ K \otimes_F K \ \ar@{->}[d]^{\pi_{g H}} \ar@{^(->}[r]^{\iota \tensor g}
 & \ L \otimes_F L\ \ar@{->}[d]^m \\ K_{gH} \ar@{^(->}[r] & L}
 \end{equation}
where $\iota \co K \ra L$ is the embedding, and $m \co
L \tensor L \ra L$ is the multiplication map.

\section{Multiplication in $A$}

Our goal is to understand how to multiply, in~$A$, the simple
summands of $A$ as a $K$-$K$ bimodule. We approach this by extending
scalars to split $K$ and $A$.

\subsection{Splitting the extension $K/F$}\label{ss:23}
More precisely, we form $\bar K = L \tensor[F] K$ which is naturally
a subalgebra of $\bar A = L \tensor[F] A \cong \M[n](L)$. By definition,
\begin{equation}\label{L2-}
\bar K = L \otimes_F K \,\isom\, L[x]/L[x]f(x) \,\isom\,
\bigoplus_{g{H} \in G/H} L.\end{equation}

Note that $L \tensor[F] (K \tensor[F] K) \cong (L \tensor[F] K) \tensor[L] (L \tensor[F] K) = \bar K \tensor[L] \bar K$. Thus $K$-$K$-sub-bimodules of $A$ become,
after extending scalars to $L$, $\bar K$-$\bar K$-sub-bimodules of $\bar A$.

Rewriting \eq{L2-} we have
\begin{equation}\label{L2}
\bar K = \sum_{g \in G/H} L e_{gH}
\end{equation}
where the $e_{gH}$ are the respective idempotents. Since
\begin{equation}\label{LLL} \Ker(\pi_{g H}) =
\span_{L}\set{g(k) \otimes 1 - 1 \otimes k \,\, | \,\, k \in
K},
\end{equation}
the components can be characterized as

\begin{equation}\label{LL}
Le_{gH} = \set{\sum a_i\tensor b_i
\,:\, \sum g(k)a_i\tensor b_i = \sum a_i \tensor b_i k\ \ \mbox{for all}\ k \in K};
\end{equation}
noting that $g(\theta)^j \tensor 1 - 1 \tensor \theta^j$ is
divisible by $g(\theta) \tensor 1 - 1 \tensor \theta$, we arrive at
the convenient description
\begin{equation}\label{LL+}
Le_{gH} = \set{\sum a_i\tensor b_i
\,:\, \sum g(\theta)a_i\tensor b_i = \sum a_i \tensor b_i \theta}.
\end{equation}

\begin{rem}\label{refined6}
In the spirit of Corollary~{\ref{doublecorr}} but simpler, there is
a correspondence between $\bar K$ submodules of $\bar K$ and unions
of cosets $S \sub G$, given by $S \mapsto \sum_{gH \sub S} L
e_{gH}$.
\end{rem}

Note that, since the idempotent $e_{gH}$ is minimal, equation
\eq{LL} uniquely defines $e_{gH}$ among all idempotents. Unlike the
idempotents $e_{[g]} \in K \tensor K$, $e_{gH}$ varies according to
the representative in the double coset. In fact, as in
\cite[Section~2.3]{Jac}, one easily verifies that for each $g$,
$$e_{gH} = \prod_{g'H \,:\, g'H \neq gH} (g(\theta) \tensor 1 - g'(\theta) \tensor 1)^{-1} \cdot \prod_{g'H \neq gH} (1 \tensor \theta - g'(\theta ) \tensor 1).$$

Letting $G$ act on the $L$ in $\bar K = L \tensor[F] K$, we
immediately observe from equation~(\ref{LL}) that
\begin{equation}\label{leftaction}
e_{g'gH}= (g' \tensor 1)(e_{gH}), \quad
\forall g' \in G.\end{equation}

\smallskip

It will be useful to view $\bar{K} = L \otimes_F K \subset L \otimes_F L$.
Now, $L \otimes_F L = \oplus_{g \in G} Le_g$ for idempotents $e_g$, and since
$gH$ are exactly the elements of $G$ that agree with $g$ on~$K$,
\begin{equation}\label{dec4}
e_{gH} = \sum_{h \in H} e_{gh}
\end{equation}
as an element of $L\otimes_F L$. Clearly, $(g' \tensor
1)(e_g) = e_{g'g}$. Moreover, from the equation $(g(t) \otimes 1 - 1
\otimes t)e_g = 0$ for all $t \in \bar K$, it also follows that
\begin{equation}\label{act}
(1 \otimes g')(e_g) = e_{gg'^{-1}}.
\end{equation}

We now have three layers of idempotents: $e_{[g]} \in K \tensor K$, $e_{gH} \in L \tensor K$ and $e_g \in L \tensor L$, where $e_{[g]} = \sum_{g'H \sub HgH} e_{g'H}$ and $e_{gH} = \sum_{g' \in gH} e_{g'}$.

\subsection{An example}

To get an example, suppose that $K/F$ contains an intermediate field
$K'$. Let $H'$ be the Galois group of $L/K'$, a subgroup of $G$
containing~$H$. Of course, $H'$ is a union of double cosets of $H$.
We ask, ``What is the sub-bimodule of $K \tensor K$, i.e., of $A$,
corresponding to $H'$?''

The answer provided by \Cref{doublecorr}, in terms of the components $K_{[g]}$, is unsatisfactory, being non-explicit as a subset of $K \tensor[F] K$ or $A$.
Instead, we will state the right answer and proceed to prove it.

In order to utilize idempotents, we note that the isomorphism $K\tensor K \isom A$ as $K \tensor K$-modules extends
to an isomorphism of $L \tensor[F] K = L \tensor[K] (K \tensor[F] K)$ and $L\tensor[K] A$ as $L\tensor K$-modules.
The advantage is that now we have a concrete decomposition of the module, in terms of idempotents and annihilators. Indeed, let $T_{gH} = \Ker(\pi_{g H})$, which is given in \eq{LLL}.
 Viewing $T_{gH}$ as an ideal of the base ring $L\tensor K$, we have that $L e_{gH} = \Ann(T_{gH})$ by \eq{L2}.
 Now, let $S \sub G$ be a subset closed under multiplication by $H$ from the right; the correspondence of
 \Rref{refined6} takes $S$ to $$\sum_{gH \sub S} Le_{gH} = \Ann\left(\bigcap_{gH \sub S} T_{gH}\right),$$
which is isomorphic (as $L$-$K$-bimodules) to the annihilator of $\bigcap_{gH \sub S} T_{gH}$ in its action on $L\tensor[K] A$.
By \Cref{order}, the dimension of this module over $L$ is $\card{S}/\card{H}$.

Let us now describe the submodule of $A$ associated to the subgroup
$H'$. Let $T'$ be the ideal (of $L \tensor K$) generated by
the elements $k' \tensor 1 - 1 \tensor k'$, ranging over $k' \in
K'$. Clearly, $T' \sub T_{gH}$ for any $g \in H'$, since $g(k') =
k'$ for $g \in H'$. Now, the annihilator of $T'$ in its action on
$L \tensor[K] A$ is composed of the elements commuting with
$K'$, so is equal to $L \tensor[K] \operatorname{C}_A(K')$.
The dimension is $[\operatorname{C}_A(K')\!:\!K] = [K\!:\!K'] =
\card{H'}/\card{H}$, noting that $K$ is a maximal subfield of
$\operatorname{C}_A(K')$. To summarize, the annihilator of
$\bigcap_{gH \sub H'} T_{gH}$ is contained in the annihilator of
$T'$, and they have the same dimension, so they are equal.

By descent from $L \tensor K$ to $K \tensor K$, we have
proved:
\begin{lem}\label{L10} $H'$ corresponds to the $K$-$K$ submodule of $A$ which is
the centralizer, $\operatorname{C}_A(K')$, of $K'$ in $A$.
\end{lem}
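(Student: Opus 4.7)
The plan is to work after extending scalars to $L$, where the bimodule structure is explicit via the idempotents $e_{gH}$, to identify both sides as the annihilator of a common ideal, and then close the argument with a dimension count. Since $H \sub H'$, the subgroup $H'$ is automatically a union of $H$-double cosets, so \Cref{doublecorr} assigns to it a well-defined $K$-$K$ sub-bimodule $M \sub A$; after extending scalars, \Rref{refined6} gives $L\tensor[K] M = \sum_{gH \sub H'} Le_{gH}$ as a $\bar K$-submodule of $L\tensor[K] A$. Because $L/F$ is faithfully flat, it suffices to identify the latter sum with $L\tensor[K] \operatorname{C}_A(K')$.

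Set $T_{gH} = \Ker(\pi_{gH}) \sub L\tensor[F] K$. By \eq{L2} and the semisimplicity of $\bar K$, $Le_{gH} = \Ann(T_{gH})$, and annihilators convert sums into intersections, so
\[
 \sum_{gH \sub H'} Le_{gH} \,=\, \Ann\!\Big(\bigcap_{gH \sub H'} T_{gH}\Big).
\]
Let $T' \sub L\tensor[F] K$ be the ideal generated by the elements $k'\tensor 1 - 1\tensor k'$ for $k' \in K'$. For every $g \in H'$ one has $g(k')=k'$, so each generator of $T'$ lies in $T_{gH}$ by \eq{LLL}; hence $T' \sub \bigcap_{gH \sub H'} T_{gH}$, and passing to annihilators yields
\[
 \sum_{gH \sub H'} Le_{gH} \,\sub\, \Ann(T').
\]

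Next I would compute $\Ann(T')$ directly inside $L\tensor[K] A$: the generator $k'\tensor 1 - 1\tensor k'$ acts on $m\tensor a$ as $m\tensor(k'a-ak')$, so the annihilator is exactly the set of elements commuting with $K'$, giving $\Ann(T') = L\tensor[K] \operatorname{C}_A(K')$. It remains to check that both sides have the same $L$-dimension: by \Cref{order}, the sum has dimension $\card{H'}/\card{H} = [K:K']$, while the fact that $K$ is a maximal subfield of the central simple $K'$-algebra $\operatorname{C}_A(K')$ gives $\dim_K \operatorname{C}_A(K') = [K:K']$. The inclusion is therefore an equality, and faithful flatness of $L/F$ yields the corresponding equality already in~$A$.

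The main subtlety is showing $\Ann(T') = L\tensor[K] \operatorname{C}_A(K')$ and not something strictly larger: one must use that $T'$ is the full ideal generated by the commutator generators, and it is the dimension count that ultimately forces saturation rather than any direct module-theoretic comparison of the two bimodules.
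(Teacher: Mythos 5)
Your argument is correct and is essentially the same as the paper's own proof: extend scalars to $L$, identify $\sum_{gH\sub H'} Le_{gH}$ as $\Ann\big(\bigcap_{gH\sub H'} T_{gH}\big)$, introduce the ideal $T'$ generated by $k'\tensor 1 - 1\tensor k'$, use $T'\sub\bigcap T_{gH}$ to get one inclusion, compute $\Ann(T') = L\tensor_K \operatorname{C}_A(K')$, and close with the dimension count $\card{H'}/\card{H} = [K:K']$ followed by descent. Your version spells out the action of $T'$ on $L\tensor_K A$ in slightly more detail, but the mechanism is identical.
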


More generally, suppose there is an intermediate field $K''/F$ where
$K' \subset K'' \subset K$ and $K''/K'$ is cyclic Galois with Galois
group generated by $\sigma$. Let $N$ be the Galois group of $K''$ in
$L$. We have that $H \subset N \subset H'$, $N$ is normal in
$H'$, and $H'/N$ is generated by $\sigma$. Since $\sigma$ normalizes
$N$, $N\sigma$ is a union of $H$ double cosets. Arguing as above
with $T'' $ generated by $ \set{\sigma(\ell) \tensor 1 - 1 \tensor
\ell \,:\, \ell \in L}$, we have:

\begin{prop}\label{subcyclic}
$N\sigma$ corresponds to the submodule
$$\operatorname{C}_A(K'',\sigma) = \{a \in A \,|\, \sigma(\ell)a = a\ell \mbox{ for all } \ell \in K'' \}.$$
\end{prop}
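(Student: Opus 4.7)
The plan is to imitate the proof of \Lref{L10} almost verbatim. Take $T'' \sub L \tensor[F] K$ to be the ideal generated by $\set{\sigma(\ell) \tensor 1 - 1 \tensor \ell \co \ell \in K''}$, and first verify that $N\sigma$ is a union of $H$-double cosets: since $H \sub N$, left $H$-multiplication preserves $N\sigma$, and for $h \in H \sub H'$ one has $(n\sigma)h = n(\sigma h\sigma^{-1})\sigma \in N\sigma$ because $N$ is normal in $H'$.

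Next I would check $T'' \sub T_{gH}$ for every $g \in N\sigma$. Writing $g = n\sigma$ with $n \in N$ and taking $\ell \in K''$, we have $g(\ell) = n(\sigma(\ell)) = \sigma(\ell)$ because $\sigma(\ell) \in K''$ and $n$ fixes $K''$; hence $\sigma(\ell)\tensor 1 - 1\tensor\ell$ equals $g(\ell)\tensor 1 - 1\tensor\ell$ and lies in $T_{gH}$ by~\eq{LLL}. Therefore $\Ann(T'')$, computed in $L \tensor[K] A$, contains $\sum_{gH\sub N\sigma}Le_{gH}$, which by \Rref{refined6} is the submodule corresponding to $N\sigma$.

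I would then close the chain via a dimension count. On one side, $L \tensor[K]\operatorname{C}_A(K'',\sigma) \sub \Ann(T'')$: for $a \in \operatorname{C}_A(K'',\sigma)$, using $\sigma(\ell) \in K$ to pull across the tensor,
\[
(\sigma(\ell)\tensor 1)(1\tensor a) = \sigma(\ell)\tensor a = 1\tensor \sigma(\ell)a = 1\tensor a\ell = (1\tensor \ell)(1\tensor a),
\]
and commutativity of $K$ shows $\operatorname{C}_A(K'',\sigma)$ is a $K$-$K$ sub-bimodule. On the other side, Skolem-Noether applied inside the central simple $K'$-algebra $\operatorname{C}_A(K')$, which contains the cyclic Galois subextension $K''/K'$, produces an invertible $u \in \operatorname{C}_A(K')$ satisfying $u\ell u^{-1} = \sigma(\ell)$, i.e.\ $\sigma(\ell)u = u\ell$, for $\ell \in K''$. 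A routine check then gives $\operatorname{C}_A(K'',\sigma) = u\cdot \operatorname{C}_A(K'')$, whence $\dim_K \operatorname{C}_A(K'',\sigma) = \dim_K \operatorname{C}_A(K'') = n/[K''\!:\!F] = \card{N}/\card{H}$, while \Cref{order} yields the same $L$-dimension $\card{N\sigma}/\card{H} = \card{N}/\card{H}$ for the submodule associated with $N\sigma$. The three modules must therefore coincide, and descending from $L \tensor[K] A$ back to $A$ via \Cref{doublecorr} identifies $\operatorname{C}_A(K'',\sigma)$ as the sub-bimodule of $A$ corresponding to the $H$-biset $N\sigma$.

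The main obstacle I anticipate is the Skolem-Noether step and the resulting identification $\operatorname{C}_A(K'',\sigma) = u\cdot \operatorname{C}_A(K'')$, which supplies the dimension $\card{N}/\card{H}$ needed to force equality; everything else is a direct transcription of the argument for \Lref{L10}.
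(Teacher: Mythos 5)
Your argument is correct and is a faithful completion of the paper's one-line proof (``Arguing as above with $T''$ generated by\ldots''): you identify $T''$, verify $T''\sub T_{gH}$ for $g\in N\sigma$, identify $\Ann(T'')$ with $L\tensor[K]\operatorname{C}_A(K'',\sigma)$, and close the chain with a dimension count. The one substantive departure is the use of Skolem--Noether to manufacture $u$ with $u\ell u^{-1}=\sigma(\ell)$ and thereby compute $\dim_K\operatorname{C}_A(K'',\sigma)=\card{N}/\card{H}$. Immediately after this proposition the paper remarks that $\operatorname{C}_A(K'',\sigma)$ is nonzero, that by \Lref{invelem} it contains an invertible element, and that ``Lemma~\ref{L1} is actually a generalization of the Skolem-Noether Theorem.'' Since an invertible element of $\operatorname{C}_A(K'',\sigma)$ \emph{is} the output of Skolem--Noether for $K''/K'$ inside $\operatorname{C}_A(K')$, feeding Skolem--Noether into the proof of the proposition renders that remark circular, so it is worth seeing how to avoid it. A clean route bypasses the dimension count altogether: $L\tensor[K]A\cong\bigoplus_{gH}Le_{gH}$ with each $Le_{gH}$ simple of annihilator $T_{gH}$, so for any ideal $T\sub L\tensor K$ one has $\Ann(T)=\bigoplus_{T\sub T_{gH}}Le_{gH}$; and since $T_{gH}$ is maximal with $\pi_{gH}(\sigma(\ell)\tensor 1-1\tensor\ell)=\sigma(\ell)-g(\ell)$, the inclusion $T''\sub T_{gH}$ holds iff $g$ agrees with $\sigma$ on $K''$, iff $\sigma^{-1}g\in N$, iff $g\in N\sigma$. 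Thus $\Ann(T'')=\sum_{gH\sub N\sigma}Le_{gH}$ exactly, which together with your (correct) identification $\Ann(T'')=L\tensor[K]\operatorname{C}_A(K'',\sigma)$ finishes the proof by descent, with no appeal to Skolem--Noether and no need to know $\dim_K\operatorname{C}_A(K'',\sigma)$ in advance.
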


In particular $\operatorname{C}_A(K'',\sigma)$ is non-zero, and as
we will show (\Lref{invelem}), it contains an invertible element.
Thus Lemma~\ref{L1} is actually a generalization of the
Skolem-Noether Theorem.

\subsection{Splitting $K \tensor K$}$ $

Next, we consider the tensor product
$L \otimes_F (K \otimes_F K) =
\bar K \otimes_{L} \bar K$.
Taking the tensor product of \eq{L2} with itself,
we obtain a direct sum decomposition
\begin{equation}\label{dec5}
L \otimes_F (K \otimes_F K) = \sum_{g'H,g''H \in G/H} L(e_{g'H} \otimes e_{g''H}).
\end{equation}

In Subsection~\ref{ss:23} we observed that $L \tensor[F] K = L
\tensor[K] (K \tensor[F] K)$ decomposes as $\sum L e_{gH}$. The
action of $H$ on $L$ translates to the action of $H$ on the set of
idempotents corresponding to $G/H$, so the orbits correspond to the
double cosets $\dc{H}{G}{H}$. Similarly, the action of $G$ on $L$ in
$L \tensor[F] (K\tensor[F] K)$ translates to the natural action on
the pairs $e_{g'H}\otimes e_{g''H}$, which correspond to $G/H \times
G/H$. We observe that the invariant space is $K \tensor[F] K$ in
both cases, while the actions on idempotents demonstrate the set
isomorphism $G \dom (G/H \times G/H) \isom \dc{H}{G}{H}$ given by
$$G\cdot (xH,yH) \mapsto Hy^{-1}xH.$$

\subsection{Description of $L \tensor K_{[g]}$}$ $

Let $K_{[g]} \subset K \otimes_F K$ be the direct summand
corresponding to the double coset $HgH$ with idempotent $e_{[g]}$.
Thus $e_{[g]}$ is, after tensoring over $F$ by $L$, the sum of
idempotents of the form $e_{g'H} \otimes e_{g''H}$ and we need to
determine which ones appear. We defined a projection $\pi_{gH} \co K
\otimes_F K \to K_{gH} \cong K_{[g]}$, and we also use $\pi_{g H}$ to
denote its $L$-linear extension $L \otimes_F (K \otimes_F
K) \to L \otimes_F K_{gH}$. We have an induced morphism (also called $\pi_{gH}$):

$$\pi_{gH}: \bar K \tensor[L] \bar K = L \tensor[F] (K \tensor[F] K) \to L \tensor[F] K_{gH}
\subset L \tensor[F] L.$$
Since we are going to apply this $\pi_{gH}$ to idempotents of the form $e_{g'H} \otimes e_{g''H}$
let us record the precise definition, via a commutative diagram
built on \eq{pii}, where $m$ denotes the multiplication of $L$ in the right-most vertical arrow and the multiplication of $L \tensor L$ in the right-most diagonal arrow. All the undecorated tensor products are over $F$.
\begin{equation}\label{piiii}
\xymatrix@C=6pt{
{} & (\bar K) \tensor[L] (\bar K) \ar@{->}[rr]^{(1\tensor \iota) \tensor (1 \tensor g)} \ar@{->}[ddl]|(0.5)\hole^(0.65){\pi_{gH}} & {} & (L \tensor L) \tensor[L] (L \tensor L) \ar@{->}[ddl]^{m} \\
L \tensor (K \tensor K) \ar@{->}[d]_{\pi_{gH}} \ar@{->}[rr]^{1 \tensor (\iota \tensor g)} \ar@{->}[ru]|{\cong} & {} & L \tensor (L \tensor L) \ar@{->}[d]_{1 \tensor m} \ar@{->}[ru]|{\cong} & {} \\
L \tensor K_{gH} \ar@{^(->}[rr] & {} & L \tensor L & {}
}\end{equation}

Let $H(g) = H \cap gHg^{-1}$ which we saw was the
Galois group of $L$ over $K_{gH}$. Exactly as in \eq{L2-} and \eq{L2},
$$L \otimes K_{gH} =
\bigoplus_{fH(g) \in G/H(g)} Le'_{fH(g)} \isom \sum_{fH(g) \in G/H(g)} L $$
for idempotents $e'_{fH(g)}$.
(The $e'_{fH(g)}$ are components of the $e_{fH} \in L \tensor
K$, in the sense that  $e_{fH} = \sum_{f'H(g) \sub fH}
e'_{f'H(g)}$, in analogy to \eq{dec4}.)

\begin{prop}
Let $g',g'' \in G$. The image $\pi_{gH}(e_{g'H} \otimes e_{g''H})$
is the primitive idempotent $e_{fH(g)}',$ if $f \in G$ restricts to
$g'$ on $K$ and to $g''g^{-1}$ on $g(K)$. If there is no such $f$,
then $\pi_{gH}(e_{g'H} \otimes e_{g''H}) = 0$.
\end{prop}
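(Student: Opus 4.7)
The plan is to compute $\pi_{gH}(e_{g'H} \tensor e_{g''H})$ via the upper route of the commutative diagram \eq{piiii}, which reduces the question to multiplying orthogonal idempotents in $L\tensor[F] L$. First I would apply $(1 \tensor \iota)$ to $e_{g'H}\in\bar K$; by \eq{dec4} this yields $\sum_{h\in H}e_{g'h}$. Then I would apply $(1\tensor g)$ to $e_{g''H}$; writing $e_{g''H}=\sum_{h'\in H}e_{g''h'}$ in $L\tensor[F] L$ and invoking \eq{act}, this becomes $\sum_{h'\in H}e_{g''h'g^{-1}}$. Feeding these two sums into the multiplication map $m$ produces
\[
\pi_{gH}(e_{g'H}\tensor e_{g''H})\,=\,\sum_{h,h'\in H} e_{g'h}\cdot e_{g''h'g^{-1}}
\]
as an element of $L\tensor[F] L$.

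Next I would invoke the fact that the $e_x$, for $x\in G$, are pairwise orthogonal primitive idempotents in $L\tensor[F] L$: the product $e_{g'h}\cdot e_{g''h'g^{-1}}$ equals $e_{g'h}$ if $g'h=g''h'g^{-1}$ and vanishes otherwise. The condition $g'h=g''h'g^{-1}$ for some $h,h'\in H$ says precisely that $f:=g'h$ lies in $g'H$ (automatic) and that $fg\in g''H$; equivalently, $f|_K=g'|_K$ and $f|_{g(K)}=(g''g^{-1})|_{g(K)}$. So the nonzero terms in the double sum are indexed exactly by those $f\in G$ restricting to $g'$ on $K$ and to $g''g^{-1}$ on $g(K)$, and if no such $f$ exists the image is zero.

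Otherwise, any two admissible $f_1,f_2$ satisfy $f_1^{-1}f_2\in H$ (agreement on $K$) and $f_1^{-1}f_2\in gHg^{-1}$ (agreement on $g(K)$), so $f_1^{-1}f_2\in H(g)$ and the admissible elements form a single left coset $fH(g)$. It then remains to identify $\sum_{f'\in fH(g)} e_{f'}$ with the primitive idempotent $e'_{fH(g)}\in L\tensor[F] K_{gH}\subseteq L\tensor[F] L$; this is the analog of \eq{dec4} for the pair $(K_{gH}/F,\,H(g))$ in place of $(K/F,\,H)$, which is exactly the relation indicated just before the proposition.

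The main difficulty, as I see it, is bookkeeping rather than mathematical depth: one must carefully distinguish the four layers of idempotents $e_{[g]}\in K\tensor[F] K$, $e_{gH}\in L\tensor[F] K$, $e_g\in L\tensor[F] L$, and $e'_{fH(g)}\in L\tensor[F] K_{gH}$, and keep straight how the embeddings $1\tensor\iota$ and $1\tensor g$ act on idempotents labeled by cosets of $H$. Once the diagram is interpreted correctly, the calculation itself reduces to a one-line orthogonality argument.
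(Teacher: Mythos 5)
Your argument is correct and follows essentially the same route as the paper: compute $\pi_{gH}(e_{g'H}\otimes e_{g''H})$ via the upper path of \eq{piiii}, expand both idempotents by \eq{dec4}, apply \eq{act} to $(1\otimes g)$, and use orthogonality of the $e_x$ in $L\otimes_F L$ to reduce to the condition $g'h=g''h'g^{-1}$. You are a bit more explicit than the paper in the final step, spelling out why the admissible $f$ form a single left $H(g)$-coset and why $\sum_{f'\in fH(g)}e_{f'}$ is $e'_{fH(g)}$ (the analog of \eq{dec4} noted just before the proposition), but the structure and content match.
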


\begin{proof}
We apply $\pi_{gH}$ to $e_{g'H} \tensor e_{g''H}$ using diagram \eq{piiii}, taking the route to the right and then down. The first step takes us to $e_{g'H} \tensor (1\tensor g)(e_{g''H})$, where each entry is now an element of $\bar{K} \tensor \bar{K}$, where we can apply \eq{dec4} to get the sum
$$\sum_{h',h'' \in H} e_{g'h'} \tensor (1\tensor g)(e_{g''h''}).$$
Applying \eq{act}, this is equal to
$$\sum_{h',h'' \in H} e_{g'h'} \tensor e_{g''h''g^{-1}}.$$
Multiplication in $L \tensor L$ takes us now to
$$\sum_{h',h'' \in H} e_{g'h'} e_{g''h''g^{-1}} = \sum_{h',h'' \in H} \delta_{g'h',g''h''g^{-1}}e_{g'h'},$$
where $\delta$ is the Kronecker delta. Thus, the image is nonzero
iff there are $h',h'' \in H$ such that $g'h' = g''h''g^{-1}$.

Assume that $f = g'h' = g''h''g^{-1} = g''g^{-1}(gh''g^{-1})$. Note
that $f \in g'H$ is equivalent to $f$ restricting to $g'$ on $K$;
and $f \in g''g^{-1}(gHg^{-1})$ is equivalent to $f$ restricting to
$g''g^{-1}$ on $g(K)$.

If this happens for one pair $g'h'$ and $g''g^{-1}gh''g^{-1}$ the
same is true for $g'h'h$ and $g''g^{-1}gh''g^{-1}h$ for any $h \in H
\cap gHg^{-1} = H(g)$, and so when some such pair exists we have
$\pi_{[g]}(e_{g'H} \otimes e_{g''H}) = e_{fH(g)}$ as needed.
\end{proof}

\begin{cor} \label{idempotents}
The idempotent $e_{g'H} \otimes e_{g''H}$ appears in $L \otimes_F K_{gH}$
if and only if $g'^{-1}g'' \in HgH$. In other words, as a subalgebra of \eq{dec5},
$$L \tensor K_{[g]} = \sum_{g'^{-1}g'' \in HgH} L (e_{g'H} \otimes e_{g''H}).$$
\end{cor}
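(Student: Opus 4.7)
The plan is to deduce both assertions of the corollary directly from the preceding proposition, once we correctly identify $\pi_{gH}$ with a projection. First I would note that the ring surjection $\pi_{[g]} \co K \tensor[F] K \to K_{[g]}$ is, by construction, the unique ring homomorphism to $K_{[g]}$ sending $1$ to $1$, and therefore coincides with the block projection in the decomposition of \Lref{lem4}; explicitly, it kills every summand $K_{[g']}$ with $[g'] \ne [g]$. Extending scalars from $F$ to $L$, the induced $L$-algebra map $\pi_{gH} \co L \tensor[F] (K \tensor[F] K) \to L \tensor[F] K_{gH}$ is (after composing with the isomorphism $K_{gH} \isom K_{[g]}$) the block projection onto the summand $L \tensor[F] K_{[g]}$ of $L \tensor[F] (K \tensor[F] K) = \bigoplus_{[g']} L \tensor[F] K_{[g']}$.

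Second, the primitive idempotent decomposition \eqref{dec5} refines the block decomposition: each primitive idempotent $e_{g'H} \tensor e_{g''H}$ of the commutative semisimple $L$-algebra $L \tensor[F] (K \tensor[F] K)$ lies in exactly one of the block summands $L \tensor[F] K_{[g']}$. Consequently $e_{g'H} \tensor e_{g''H}$ lies in $L \tensor[F] K_{[g]}$ if and only if $\pi_{gH}(e_{g'H} \tensor e_{g''H}) \ne 0$. By the previous proposition this non-vanishing is equivalent to the existence of some $f \in G$ restricting to $g'$ on $K$ and to $g''g^{-1}$ on $g(K)$, i.e.\ to $g'H \cap g''Hg^{-1} \ne \emptyset$ (using that $f$ restricts to $g''g^{-1}$ on $g(K)$ iff $g''^{-1} f g \in H$, i.e.\ $f \in g''Hg^{-1}$).

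Finally, the intersection $g'H \cap g''Hg^{-1}$ is nonempty iff $g'h_1 = g''h_2 g^{-1}$ for some $h_1, h_2 \in H$, which rearranges to $g'^{-1}g'' = h_1 g h_2^{-1} \in HgH$; conversely any such factorization in $HgH$ produces the required element $f$. This proves the ``if and only if'' statement, and the asserted equality $L \tensor[F] K_{[g]} = \sum_{g'^{-1}g'' \in HgH} L(e_{g'H} \tensor e_{g''H})$ is then immediate: since each primitive $e_{g'H} \tensor e_{g''H}$ is either entirely inside or entirely outside $L \tensor K_{[g]}$, the summand is the $L$-span of those that survive.

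I expect no serious obstacle; the one conceptual point is verifying that $\pi_{gH}$ is genuinely the block projection onto $L \tensor K_{[g]}$ rather than merely some $L$-algebra map onto an isomorphic copy, which is why I emphasize its characterization as the unique ring-homomorphism extension of $\pi_{[g]}$. Everything else is routine coset bookkeeping built on the previous proposition.
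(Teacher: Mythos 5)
Your proposal is correct and follows essentially the same route as the paper: invoke the preceding proposition's criterion for $\pi_{gH}(e_{g'H}\otimes e_{g''H})\neq 0$, then translate the nonempty-intersection condition $g'H\cap g''Hg^{-1}\neq\emptyset$ into $g'^{-1}g''\in HgH$ by the same rearrangement $g'h'=g''h''g^{-1}\iff g'^{-1}g''=h'g(h'')^{-1}$. You do make explicit a step the paper glosses over, namely that $\pi_{gH}$ really is (after the identification $K_{gH}\cong K_{[g]}$) the block projection onto $L\tensor K_{[g]}$ so that ``appears in'' means exactly ``has nonzero image,'' which is a worthwhile clarification but not a different argument.
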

\begin{proof}
We proved in the proposition that the idempotent appears in $L \tensor K_{gH}$ iff
$g'H \cap g''Hg^{-1} = g'H \cap g''g^{-1}(gHg^{-1})$ is nonempty. This is equivalent to
the second statement because if $g'h' = g''h''g^{-1}$ then $g'^{-1}g'' = h'gh''^{-1}$.
\end{proof}

\begin{rem}
Note that the decomposition of $A$ into submodules isomorphic
to the $K_{[g]}$'s (associated with double cosets) is a
generalized grading of $A$, and has as a special case the known
gradings when $H = 1$. When combined with Theorem \ref{main}  to
come,  the $K$-$K$ bimodule decomposition of $A$ will be seen to be
a generalized grading as well.
\end{rem}

\subsection{The matrix representation of $A$}$ $

Our next step is to understand the product of sub-bimodules of $A$,
which are all of the form $KaK$ for $a\in A$, as subsets of $A$. One
might think we have to specify and understand $A$. However, the fact
that isomorphic $K$-$K$ sub-bimodules are equal implies we can
multiply the sets inside $L \tensor A = \M[n](L)$, and the Brauer
class of $A$ does not matter.

As we stated above, $L \tensor A$ is the matrix algebra
$\M[n](L)$, but we want to be more specific. Since $L
\otimes_F K$ is a direct sum of copies of $L$, we may assume
that
\begin{equation}\label{triv-e}
\bar K = L \otimes_F K \subset \bar A
\end{equation}
are diagonal matrices. More specifically, the idempotents $e_{gH}$
of \eq{L2} are then diagonal idempotents. Moreover, we can choose
matrix units $e_{gH,g'H}$ for $\bar A$, such that the embedding
\eq{triv-e} sends $e_{gH}$ to the diagonal matrix unit $e_{gH,gH}$.
In addition, we can be very free to choose $v \in \bar A$ such that
$(\bar K)v(\bar K) = \bar A$, subject only
to the condition that all the matrix entries of $v$ are nonzero.
Furthermore, taking $v$ to be the all-$1$ matrix, the bimiodule
isomorphism $(\bar K) \otimes_{L} (\bar K) \ra \bar A$ defined by $x \tensor y \mapsto xvy$, maps $e_{gH}
\otimes e_{g'H}$ to $e_{gH,g'H}$.

Recall that our starting point is the isomorphism of
$K$-$K$-bimodules $K \tensor K \ra A$, as in the bottom row of
Figure~\ref{fig1} (see below). However, since we adjust the
isomorphism on the upper row to send $e_{gH} \otimes e_{g'H}$ to
$e_{gH,g'H}$, the diagram does \emph{not} commute when the side
arrows are the natural embeddings (since the all-$1$ matrix is not
an element of $A$). Our strategy still works, because a submodule of
$K \tensor K$ has the same image in $\bar{K} \tensor A$ under both
routes of the diagram, since the images are isomorphic as bimodules.

Following \Cref{doublecorr}, we thus have a one to one
correspondence between $H$-bisets $S \sub G$ and
$K$-$K$-sub-bimodules of $A$, given by
\begin{equation}\label{Phidef}
\Phi \co S \mapsto \sum_{HgH \sub S} \varphi_v(K_{[g]}),
\end{equation}
where $\varphi_v(x\tensor y) = xvy$ and $v \in
A$ is any element for which $KvK = A$.

\begin{figure}[!h]
$$
\xymatrix{\bar{K} \tensor (K \tensor K) \ar@{->}[r] & \bar{K} \tensor A \\
K \tensor K \ar@{^(->}[u] \ar@{..>}[r]& A \ar@{^(->}[u]}$$
\caption{The diagram does not commute on elements when the upper
arrow sends $e_{gH} \otimes e_{g'H} \mapsto e_{gH,g'H}$, but does
commute on sub-bimodules}\label{fig1}
\end{figure}

From Corollary \ref{idempotents} we have:

\begin{prop} \label{matrixform}
With $\bar A$ and $e_{gH,g'H}$ as above, if $KaK \subset A$ is the
simple bimodule associated to the double coset $HgH$, then
$L\otimes_F (KaK) \subset \bar A$ is spanned by all $e_{g'H,g''H}$
such that $g'^{-1}g'' \in HgH$.\end{prop}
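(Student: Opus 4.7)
The plan is to transfer \Cref{idempotents}, which describes $L\otimes_F K_{[g]}$ inside $\bar K\tensor_L \bar K$, across to the matrix algebra $\bar A$, via the isomorphism of $K$-$K$-bimodules $K\tensor_F K \ra A$ furnished by $\varphi_v$ for a generator $v$ with $KvK=A$. Tensoring with $L$ yields a $\bar K$-$\bar K$-bimodule isomorphism $\bar K\tensor_L \bar K \ra \bar A$, and under this isomorphism the summand $L\tensor_F K_{[g]}$ corresponds to $L\tensor_F(KaK)$. So after applying \Cref{idempotents} on the left-hand side, it remains to identify the image in $\bar A$ of each piece $L(e_{g'H}\tensor e_{g''H})$ with $L\,e_{g'H,g''H}$.

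For this identification I would not try to push individual elements through the square of Figure~\ref{fig1}, which as noted there does not commute on elements; instead I would use the fact that the decomposition of $\bar A$ into simple $\bar K$-$\bar K$-sub-bimodules is multiplicity free. Concretely, any simple $\bar K$-$\bar K$-sub-bimodule of $\bar K\tensor_L\bar K$ or of $\bar A$ is one-dimensional over $L$ and is characterized by a single pair of idempotents $(e_{g'H},e_{g''H})$ acting as the identity on left and right respectively. Thus $L(e_{g'H}\tensor e_{g''H})\sub \bar K\tensor_L\bar K$ and $L\,e_{g'H,g''H}\sub \bar A$ are \emph{the} simple sub-bimodules with those idempotent actions, so any $\bar K$-$\bar K$-bimodule isomorphism must send one to the other. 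Summing over the pairs produced by \Cref{idempotents} then yields
\[L\tensor_F(KaK)\;=\;\sum_{g'^{-1}g''\in HgH} L\,e_{g'H,g''H},\]
which is the desired spanning statement.

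The main obstacle is bookkeeping rather than substance: one has to be confident that the two $\bar K$-$\bar K$-bimodule structures in play (the one inherited from $K\tensor K\isom A$ via $\varphi_v$, and the one arising from the explicit matrix-unit choice using the all-$1$ matrix in $\bar A$) do agree on the level of submodules even though they disagree on elements. This is precisely what the multiplicity-one property of the semisimple decomposition of $\bar K\tensor_L\bar K$ (and $\bar A$) guarantees, and is the content of the observation following Figure~\ref{fig1}. Once this point is granted, the proposition follows directly from \Cref{idempotents} and the identification $\Phi$ of \eqref{Phidef}.
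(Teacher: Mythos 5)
Your argument is correct and follows essentially the same route as the paper: pass to $\bar A$, apply Corollary~\ref{idempotents}, and use the multiplicity-one property of the semisimple decomposition (which is exactly the paper's observation after Figure~\ref{fig1}) to match each simple piece $L(e_{g'H}\otimes e_{g''H})$ with $L\,e_{g'H,g''H}$ at the level of sub-bimodules, without needing element-level commutativity. You have simply spelled out the step the paper leaves implicit in its terse ``From Corollary~\ref{idempotents} we have.''
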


From this we get our main theorem on products of bimodules. Notice that the set of $H$-bisets in $G$ is a semiring with respect to the operations of union and multiplication (with the empty set as a zero element and $H$ as a multiplicative unit).

Similarly, the sum of bimodules and the product of bimodules in $A$ are bimodules, and moreover the product is distributive with respect to the sum. The induces a semi-ring structure on the $K$-$K$ sub-bimodules of $A$ (where the zero module is a zero element and $K$ is a multiplicative unit).

\begin{thm} \label{main}
Let $A/F$ be a central simple algebra with maximal separable
subfield~$K$.
The map $\Phi$ of \eq{Phidef} is an isomorphism of semirings, from the semiring of $H$-bisets in $G$
to the semiring of $K$-$K$ sub-bimodules of $A$.
\end{thm}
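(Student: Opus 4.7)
The plan is to derive multiplicativity of $\Phi$ from the matrix description of sub-bimodules provided by \Pref{matrixform}, after extending scalars to $L$. Bijectivity of $\Phi$ is already contained in \Cref{doublecorr} composed with the bimodule isomorphism $\varphi_v\co K \tensor[F] K \ra A$, and additivity is immediate from the defining formula $\Phi(S) = \sum_{HgH \sub S} \varphi_v(K_{[g]})$, since distinct double cosets give non-isomorphic, hence internally disjoint, summands of $A$. The neutral elements match as well: $\Phi(\emptyset) = 0$ trivially, while $\Phi(H) = \varphi_v(K_{[1]}) = K$, since the unique sub-bimodule of $A$ isomorphic to $K_{[1]} \isom K$ is $K$ itself.

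For multiplicativity, first note that the product $S_1 S_2 = \set{s_1 s_2 \co s_i \in S_i}$ of two $H$-bisets is again an $H$-biset. I then extend \Pref{matrixform} by additivity: for every $H$-biset $S \sub G$, the $L$-subspace $L \tensor[F] \Phi(S) \sub \bar A$ is spanned by the matrix units $e_{g'H, g''H}$ with $g'^{-1}g'' \in S$. A subtle point is that the diagram of Figure~\ref{fig1} does not commute on elements, so this identification is a priori only valid up to bimodule isomorphism; but because $K \tensor[F] K$ is semisimple with multiplicity-free decomposition, any two isomorphic $K$-$K$ sub-bimodules of $A$ coincide as subsets. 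Hence $L \tensor[F] \Phi(S)$ is intrinsically determined by $S$, and the matrix-unit description applies.

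Given this, I compute $\Phi(S_1) \cdot \Phi(S_2)$ inside $\bar A$ using the matrix-unit product rule
\[e_{aH, bH} \cdot e_{cH, dH} = \delta_{bH, cH} \, e_{aH, dH}.\]
The $L$-span of products is spanned by those $e_{aH, dH}$ for which there exists $b \in G$ with $a^{-1}b \in S_1$ and $b^{-1}d \in S_2$; one direction gives $a^{-1}d = (a^{-1}b)(b^{-1}d) \in S_1 S_2$, and the reverse follows by writing $a^{-1}d = s_1 s_2$ with $s_i \in S_i$ and setting $b = a s_1$, so that $a^{-1}b = s_1$ and $b^{-1}d = s_2$. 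Thus the product span coincides with $L \tensor[F] \Phi(S_1 S_2)$, and by faithful flatness of $L/F$ we conclude $\Phi(S_1) \Phi(S_2) = \Phi(S_1 S_2)$ inside $A$.

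The main obstacle is the bookkeeping across the different layers: the bimodule side, which is intrinsic to $A$, and the matrix side, which depends on the choices of $v$ and of matrix units in $\bar A$. Once the rigidity supplied by multiplicity-free semisimplicity of $K \tensor[F] K$ is invoked to justify reading the matrix support off $S$ itself, the proof reduces to the transparent combinatorics of multiplying elementary matrices indexed by $G/H$.
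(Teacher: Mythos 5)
Your proof is correct and follows essentially the same route as the paper's: extend scalars, identify $L \tensor[F] \Phi(S)$ with the span of matrix units $e_{g'H,g''H}$ with $g'^{-1}g'' \in S$ via Proposition~\ref{matrixform}, and reduce multiplicativity to the identity $(g_1^{-1}g_2)(g_2^{-1}g_3) = g_1^{-1}g_3$. Your extra remarks on bijectivity, additivity, neutral elements, and the non-commuting diagram are sound elaborations of the same argument rather than a different method.
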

\begin{proof}
Suppose that $KaK$, $Ka'K$ are $K$-$K$ sub-bimodules associated to
the $H$-bisets $S,S' \sub G$, respectively. Because isomorphism and
equality of sub-bimodules are equivalent, it suffices to prove this
result after extension of scalars to $\bar A = \bar K \otimes_F A$.
Then $(\bar K \otimes_F (KaK))(\bar K \otimes_F (Ka'K))$ is spanned
by all products $e_{g_1H,g_2H}e_{g_2H,g_3H} = e_{g_1H,g_3H}$ where
$g_1^{-1}g_2 \in S$ and $g_2^{-1}g_3 \in S'$. But
$(g_1^{-1}g_2)(g_2^{-1}g_3) = g_1^{-1}g_3$ is the general element of
$SS'$.
\end{proof}
In fact our semirings are idempotent semirings ($x+x=x$ for every $x$), and they come with some extra structure. The additive atoms are double cosets on one hand, and irreducible sub-bimodule on the other hand. There is also an involution, defined by inversion on bisets, and by $\varphi_v(K_{[g]}) \mapsto \varphi_v(K_{[g^{-1}]})$ on irreducible sub-bimodules.

\begin{cor}\label{main-sg}
Let $S$ be an $H$-biset in $G$. Then $\Phi(S)$ is a subalgebra if and only if $S$ is a subgroup.
\end{cor}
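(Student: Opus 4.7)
The plan is to combine \Tref{main} with the elementary fact that, in a finite group, any non-empty subset closed under multiplication is a subgroup. Under the semiring isomorphism $\Phi$ supplied by \Tref{main}, the multiplicative unit $H$ of the biset semiring corresponds to the subalgebra $K \subset A$, so the statement ``$\Phi(S)$ is a (unital) subalgebra of $A$'' translates precisely to the two conditions $H \sub S$ and $SS \sub S$.

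For the backward direction, if $S$ is a subgroup of $G$, then $1 \in S$ forces $H = H \cdot 1 \cdot H \sub S$ (using that $S$ is an $H$-biset), and of course $SS = S$. Applying $\Phi$ yields $K = \Phi(H) \sub \Phi(S)$ together with $\Phi(S) \cdot \Phi(S) \sub \Phi(S)$, so $\Phi(S)$ is a subalgebra of $A$.

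For the forward direction, I would start from $\Phi(S)$ being a subalgebra and derive the three subgroup axioms for $S$. Since $1 \in \Phi(S)$, bimodule closure gives $K = K \cdot 1 \cdot K \sub \Phi(S)$; translating back via $\Phi^{-1}$, this yields $H \sub S$, so in particular $1 \in S$ and $S$ is non-empty. Closure under multiplication of $\Phi(S)$ translates, by the semiring property in \Tref{main}, to $SS \sub S$. The only remaining point is closure of $S$ under inversion, and here I would invoke finiteness of $G = \Gal(L/F)$: for any $s \in S$, induction on $SS \sub S$ gives $s, s^2, s^3, \dots \in S$, and since $G$ is finite some $s^k$ equals $1$, whence $s^{-1} = s^{k-1} \in S$.

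The main obstacle is essentially just the translation step: once one recognizes that $\Phi$ sends the multiplicative identity $H$ of the biset semiring to the subalgebra $K \subset A$, the rest of the argument is a standard observation about submonoids of finite groups. No further algebraic or module-theoretic computation beyond \Tref{main} is required.
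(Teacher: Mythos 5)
Your proof is correct, and since the paper states this corollary without proof (treating it as immediate from Theorem~\ref{main} and the semiring structure identified just before it), your argument is precisely the intended one: translate ``unital subalgebra'' across the semiring isomorphism $\Phi$ to the conditions $H \subseteq S$ and $SS \subseteq S$, then invoke the standard fact that a non-empty multiplicatively closed subset of a finite group is a subgroup. No gaps.
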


It will be useful to observe a property of $KaK$ relating to
the trace.

\begin{lem}
Let $KaK \sub A$ be a sub-bimodule corresponding to a biset $S$. If $K \not\subseteq KaK$ then $\tr(KaK) = 0$.
\end{lem}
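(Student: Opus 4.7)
The plan is to pass to the matrix realization $\bar A = L \tensor[F] A \isom \M[n](L)$ of Subsection~\ref{ss:23}, and use the combinatorial description of sub-bimodules from \Pref{matrixform}. Since the reduced trace on $A$ becomes the ordinary matrix trace on $\bar A$ after base change, $\tr$ vanishes on $KaK \sub A$ if and only if it vanishes on the $L$-span $L \tensor[F] (KaK) \sub \bar A$.

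First I would invoke \Pref{matrixform} (extended from the simple case to general $H$-bisets by the bimodule decomposition): $L \tensor[F] (KaK)$ is the $L$-span of the matrix units $e_{g'H,g''H}$ with $g'^{-1}g'' \in S$. Since $\tr(e_{g'H,g''H}) = \delta_{g'H,g''H}$ in $\M[n](L)$, the trace is nonzero on this sub-bimodule precisely when some diagonal matrix unit appears, i.e.\ when there exist $g',g'' \in G$ with $g'H = g''H$ and $g'^{-1}g'' \in S$. Equivalently, $S \cap H \ne \emptyset$, and since $S$ is a union of $H$-double cosets, this is equivalent to $H \sub S$.

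Next I would identify the $H$-biset attached to the sub-bimodule $K \sub A$ itself. Under the embedding $\bar K = L \tensor[F] K \sub \M[n](L)$ as diagonal matrices (equation \eq{triv-e}), the $L$-span of $K$ is $\sum_{gH \in G/H} L\, e_{gH,gH}$; the matrix units that appear are exactly those with $g'^{-1}g'' \in H$. Therefore, under the semiring isomorphism $\Phi$ of \Tref{main}, the sub-bimodule $K$ corresponds to the biset $H = HeH$, and order preservation of $\Phi$ gives $K \sub KaK$ iff $H \sub S$.

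Combining the two observations, $K \not\sub KaK$ forces $H \not\sub S$, whence no diagonal matrix unit lies in $L \tensor[F](KaK)$, so $\tr$ vanishes on $L \tensor[F](KaK)$ and hence on $KaK$. I do not expect a real obstacle; the only subtlety is checking that the reduced trace on $A$ matches the matrix trace on $\bar A$ after scalar extension, which is standard and built into the identification $\bar A \isom \M[n](L)$.
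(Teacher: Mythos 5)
Your proposal is correct and follows essentially the same route as the paper's proof: pass to $\bar A = L\tensor[F] A$, apply \Pref{matrixform} to see that diagonal matrix units occur in $L\tensor[F](KaK)$ exactly when $H\sub S$, and translate this via $\Phi$ to $K\sub KaK$. You spell out the trace calculation and the identification of $K$ with the biset $H$ more explicitly than the paper does, but the argument is the same.
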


\begin{proof}
Let $S$ be the biset corresponding to $KaK$.
After passing to $\bar K \otimes_F A$,
we note by Proposition~\ref{matrixform} that matrix units from the principal diagonal
are in $L \otimes_F KaK$ iff $H \sub S$, iff $K = \Phi(H) \sub \Phi(S) = KaK$.
\end{proof}

\section{Powers of indecomposable modules}

In this section, we consider the series $(KaK)^m$ as $m$ increases.
By \Tref{main}, we have:
\begin{cor}\label{main-powers}
Let $KaK$ be the sub-bimodule associated to the $H$-biset $S \sub
G$. Then $(KaK)^m$ is the bimodule associated to $S^m =
\set{s_1\dots s_m \,:\, s_1,\dots,s_m \in S}$.
\end{cor}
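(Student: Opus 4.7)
The plan is to deduce this immediately from the semiring isomorphism in Theorem~\ref{main} by induction on $m$. Since $\Phi$ is multiplicative, it suffices to match $(KaK)^m$ with the $m$-fold semiring product of $S$ with itself and then identify that product with the set $S^m$ defined in the statement.

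First I would verify that $S^m = \{s_1 \cdots s_m : s_i \in S\}$ is itself an $H$-biset, so that $\Phi(S^m)$ is defined. This is automatic: since $HS = S$ and $SH = S$, one has $H(S^m) = (HS)S^{m-1} = S^m$ and similarly $(S^m)H = S^m$. Moreover, by associativity of set multiplication inside $G$, the $m$-fold product of $S$ in the semiring of $H$-bisets is precisely $S^m$ as defined.

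Next I would perform the induction. The base case $m=1$ holds by assumption, namely $KaK = \Phi(S)$. Given the inductive hypothesis $(KaK)^{m-1} = \Phi(S^{m-1})$, multiplicativity of $\Phi$ as a semiring isomorphism (Theorem~\ref{main}) gives
\[
(KaK)^m \;=\; (KaK)^{m-1} \cdot (KaK) \;=\; \Phi(S^{m-1}) \cdot \Phi(S) \;=\; \Phi(S^{m-1} \cdot S) \;=\; \Phi(S^m),
\]
completing the induction.

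I do not anticipate any genuine obstacle here: the corollary is really a bookkeeping statement about iterated multiplication under the isomorphism $\Phi$. The only check worth mentioning is the compatibility of the notation $S^m$ in the statement with the iterated semiring product, which is immediate from associativity of the set-theoretic product in $G$. All the actual content sits in Theorem~\ref{main}.
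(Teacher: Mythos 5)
Your proposal is correct and matches the paper's approach: the paper treats the corollary as an immediate consequence of the semiring isomorphism in Theorem~\ref{main}, which is precisely the induction you spell out. The extra check that $S^m$ is an $H$-biset is a harmless but reasonable addition.
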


Before going any further, let us settle a technical point.

\begin{lem}\label{invelem}
Suppose that $F$ is a field of order $> 2$, and $A$, $K$ are as
usual. If $KaK$ is a sub-bimodule, we can choose $a$ to be
invertible.
\end{lem}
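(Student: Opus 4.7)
The goal is to replace $a$ by an invertible element $a' \in M := KaK$ still satisfying $Ka'K = M$. The plan is to identify both the locus of generators of $M$ and the locus of invertible elements of $M$ as Zariski open non-empty subsets of $M$, and then find an $F$-rational point in their intersection.

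For the generator locus, decompose $M = \bigoplus_{HgH \sub S} M_{[g]}$ into simple $K$-$K$-sub-bimodules using \Cref{doublecorr}, where $S \sub G$ is the $H$-biset corresponding to $M$. An element $x \in M$ satisfies $KxK = M$ precisely when each projection $\pi_{[g]}(x) \in M_{[g]}$ is non-zero, so the set $U := \{x \in M \co KxK = M\}$ is the complement of finitely many proper $F$-subspaces and contains the given $a$.

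For the invertibility locus, I would extend scalars to the Galois closure $L$. By \Pref{matrixform}, $\bar M := M \tensor[F] L$ is the $L$-span of the matrix units $e_{xH, yH}$ for $(xH, yH)$ in the set
\[
T := \set{(xH,yH) \in G/H \times G/H \co x^{-1}y \in S}.
\]
Since $S$ is an $H$-biset, each row and column of $T$ contains $\card{S}/\card{H}$ entries, so $T$ is a $d$-regular bipartite graph with $d \geq 1$; K\"onig's theorem then furnishes a perfect matching $\sigma \sub T$, and the corresponding permutation matrix $\sum_i e_{i,\sigma(i)} \in \bar M$ is invertible. Consequently the reduced norm $N$ restricts to a non-zero polynomial of total degree $n$ on $M$, giving $M \cap A^\times \neq \emptyset$ once enough $F$-rational points exist to realize its non-vanishing.

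To produce a single element in both opens, I would pick $b \in M \cap A^\times$ and consider the pencil $a + tb$ with $t$ ranging over $K$---this stays in $M$ because $M$ is a left $K$-module. The condition $a + tb \in U$ excludes at most one $t$ per simple summand, since the affine map $t \mapsto \pi_{[g]}(a) + t\,\pi_{[g]}(b)$ is non-zero at $t=0$. The condition $a + tb \in A^\times$ excludes the zero set of the polynomial $t \mapsto N(a+tb)$, which upon writing $a + tb = (t + bcb^{-1})b$ with $c := b^{-1}a$ becomes $N(b)\cdot\det(t + bcb^{-1})$, whose top-degree part in the $n$ $F$-coordinates of $t$ is $N(b)\cdot N_{K/F}(t)$ and so is genuinely non-zero. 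The main obstacle, and the source of the sharp hypothesis $\card{F} > 2$, is verifying that these two exclusion sets cannot cover the parameter set; over infinite $F$ this is immediate by Zariski density, while over finite $F$ one exploits that $\card{K^\times} = \card{F}^n - 1$ grows exponentially in $n$, outrunning the bounded number of exclusions once $\card{F} \geq 3$, rather than the naive Schwartz--Zippel count which would only give $\card{F} > n$.
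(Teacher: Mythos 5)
Your infinite-field case is essentially sound and uses a genuinely different device than the paper's: where the paper shows the reduced norm is nonvanishing on $\bar M$ by passing to a field extension $F'$ over which the same $H$-biset arises from a sub-bimodule of a \emph{division} algebra (so any nonzero element is invertible), you exhibit an explicit invertible element of $\bar M$ by observing that the support $T \subset G/H \times G/H$ of $\bar M$ is a regular bipartite graph and invoking K\"onig's theorem to extract a permutation matrix. Both then finish with Zariski density of $F$-points. That's a nice, self-contained alternative. The pencil $a + tb$ to get a single element that both generates $M$ and is invertible is also a reasonable way to close; the paper is in fact silent on the ``generator'' condition and you are more careful there.

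The finite-field case, however, has a genuine gap, and it is precisely where the sharp hypothesis $\card{F}>2$ lives. Your argument needs two things that you do not supply: (i) an invertible $b \in M$ over $F$ itself (K\"onig only produces one in $\bar M$ over $L$, and the passage to $F$-points is exactly what is in doubt when $F$ is finite); and (ii) a count showing the union of the two exclusion sets is a proper subset of $K$. For (ii), you correctly note that Schwartz--Zippel on the degree-$n$ polynomial $t\mapsto N(a+tb)$ only gives $\card{F}>n$, and then appeal to ``$\card{K^\times}=\card{F}^n-1$ grows exponentially, outrunning the bounded number of exclusions''---but the exclusion set is not bounded: a degree-$n$ hypersurface in $\F_q^n$ can have on the order of $nq^{n-1}$ points, which for $q \le n$ can exceed $q^n-1$. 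Nothing in your write-up replaces the naive count with anything sharper, so the threshold $q>2$ is not reached.

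What the paper actually uses to get $q>2$ is a \emph{multiplicative} fibration, not a degree count. Over $\F_q$ the extension $K/F$ is automatically cyclic and $A$ is split, so $A = \bigoplus_i Ku^i$ with $u$ the Frobenius, any bimodule is $\bigoplus_{i \in I}Ku^i$, and one inducts on $\card{I}$; the induction step reduces to: given nonsingular $F$-linear $B\colon K\to K$, find $k\in K^\times$ with $k+B$ nonsingular. The bad $k$ are exactly $\{-B(x)/x : x\in K^\times\}$, and because $B(\lambda x)/(\lambda x)=B(x)/x$ for $\lambda\in F^\times$, this set has at most $(q^n-1)/(q-1)$ elements, which is strictly less than $q^n-1$ precisely when $q>2$. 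Your pencil actually admits the same trick---writing $a+tb=(ab^{-1}+t)b$, the bad $t$ are those for which $(ab^{-1}+t)$ kills some $w\ne 0$, and $t$ is determined by the \emph{line} $Fw$, giving the same $(q-1)$-to-one fibration and the same bound $(q^n-1)/(q-1)$---but you would still need to produce an $F$-rational invertible $b\in M$ first (easy from the cyclic presentation, but not from K\"onig alone), and you would need to make the fibration explicit rather than gesture at growth rates.
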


\begin{proof} We begin with the case $F$ infinite. Associated to $KaK$ is a set of idempotents
$e_{gH,g'H} \in \bar K$. Form the generic element $T = \sum
x_{gH,g'H}e_{gH,g'H}$ where the coefficients are indeterminates.
Viewing $T$ as a matrix, let $t$ be the determinant. Then $t$ is a
polynomial in the $x_{gH,g'H}$ whose coefficients are $\pm 1$. Since
the $F$ points of $KaK$ are Zariski dense, it suffices to show that
$t$ is nonzero. However, there is a field extension $F' \supset F$,
and a division algebra $D'$ over $F'$ such that $K' = K\otimes_F F'$
is a maximal subfield of $D'$ and $\bar K' = \bar K \otimes_F F'$ is
a field, with $G,H$  the corresponding Galois groups of $K'$. Now
$D'$ has a $K'$-$K'$ sub-bimodule corresponding to the same set of
double cosets, which has the form~$K'dK'$ where $d'$ is obviously
invertible. But this shows that $t$ is nonzero.

Next suppose $F$ has finite order $q > 2$. Then $K/F$ must be cyclic Galois
and $A$ is the cyclic algebra $(K/F,\sigma,1)$ where $\sigma: K \to K$
is defined by $\sigma(k) = k^q$. Write $A = \sum_{i=0}^n Ku^i$ where $uk = k^qu$
and $u^n = 1$. Viewing $A = \End_F(K)$, then $k \in K$ acts by left
multiplication and $u(x) = x^q$. Then any bimodule is of the form $\sum_{i \in I} Ku^i$ where
$I \subset \{1,\ldots,n-1\}$. It suffices to show that for any such $I$
there are nonzero $k_i$ such that $\sum_{i \in I} k_iu^i$ is nonsingular.

We proceed by induction on the order of $I$. Multiplying by a power of $u$,
we can always assume that $0 \in I$. If $|I| = 1$, then $1$ is nonsingular.
The induction step is covered by the following lemma.

\begin{lem}
Assume that $\card{F} > 2$. Suppose that $B \co K \to K$ is
$F$-linear and nonsingular. Then there is an element $k \in K^*$
such that $k + B$ is nonsingular.
\end{lem}
\begin{proof}
If $F$ is infinite one can take $k \in F$ since there are finitely
many eigenvalues. Assume that $F$ is finite. If $(k + B)x = 0$ for
nonzero $x \in K$ then $k = -B(x)/x$. It suffices to show that $x
\mapsto B(x)/x$ as a function $K^* \to K^*$ is not surjective. But
if $a \in F^*$ then $B(ax)/(ax) = B(x)/x$ so when $\card{F} > 2$,
this map is not injective.
\end{proof}
\end{proof}
\begin{rem} Note that the above result is false if $F$ has order 2. In the notation
of the above proof, $u(x) = x^2$. Thus $k + u$ is always singular because $kx + x^2 = (k + x)x$
and so for any $k$, $x = -k$ is a kernel element.\end{rem}

From now on, for simplicity, we assume that $KaK$ is a simple sub-bimodule
associated via $\Phi$ to the single double coset~$S = HgH$.
We also assume throughout that $a$ is invertible. Define
$$K(a,m) = (KaK)^m a^{-m} = K(aKa^{-1})(a^2Ka^{-2})\cdots(a^{m}Ka^{-m}).$$

\begin{lem}\label{stab00} If $K(a,m) = K(a,m+1)$ then $K(a,m) = K(a,m+s)$ for all $s \geq 0$. Moreover,
\begin{equation}\label{SerK}
K = K(a,0) \sub K(a,1) \sub K(a,2) \sub \cdots
\end{equation} stabilizes at some $m \le n$. Thus, $\dim_K(KaK)^m$ stabilizes at the same $m$.

Similarly
\begin{equation*}
H(g,m) = (HgH)^m g^{-m} = H(gHg^{-1})(g^2Hg^{-2})\cdots(g^mHg^{-m}),\end{equation*}
is an ascending chain of subsets
\begin{equation}\label{SerH}
H = H(g,0) \sub H(g,1) \sub H(g,2) \sub \cdots,
\end{equation}
so $\card{(HgH)^m}$ stabilizes.
\end{lem}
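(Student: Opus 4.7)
The plan is to reduce both stabilization statements to a single shift-by-conjugation argument. I would begin by setting $K_i = a^iKa^{-i}$ and $H_i = g^iHg^{-i}$, so that directly from the definitions $K(a,m) = KK_1K_2\cdots K_m$ and $H(g,m) = HH_1H_2\cdots H_m$, with the telescoping recurrences $K(a,m+1) = K(a,m)\cdot K_{m+1}$ and $H(g,m+1) = H(g,m)\cdot H_{m+1}$. Since $1 \in K_{m+1}$ and $1 \in H_{m+1}$, both chains \eq{SerK} and \eq{SerH} are immediately seen to be ascending.

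The heart of the argument is the claim that if $K(a,m) = K(a,m+1)$, then $K(a,m+s) = K(a,m)$ for every $s \geq 0$. The key observation is that conjugation by $a$ shifts indices by one: $aK_ia^{-1} = K_{i+1}$. Starting from the hypothesis $KK_1\cdots K_m = KK_1\cdots K_m K_{m+1}$ and conjugating both sides by $a$ yields
\[
K_1K_2\cdots K_{m+1} \;=\; K_1K_2\cdots K_{m+1}K_{m+2};
\]
multiplying on the left by $K$ then gives $K(a,m+1) = K(a,m+2)$, and induction on $s$ closes this part. The identical conjugation-by-$g$ argument handles $H(g,m)$, using $gH_ig^{-1} = H_{i+1}$. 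Alternatively, I could invoke \Tref{main} applied to $S = HgH$ to transfer one statement directly to the other, since $(KaK)^m = \Phi(S^m)$.

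For the bound $m\leq n$: $K(a,m)$ is a left $K$-submodule of $A$ (because $K\cdot K(a,m) = K(a,m)$), hence $\dim_K K(a,m)$ is a positive integer bounded by $\dim_K A = n$, so a strictly ascending chain starting from $\dim_K K = 1$ must stabilize within $n-1$ steps. The identity $\dim_K(KaK)^m = \dim_K K(a,m)$---valid because right multiplication by $a^{-m}$ is an $F$-linear bijection $(KaK)^m \to K(a,m)$ respecting the left $K$-action---then transfers stabilization to $\dim_K(KaK)^m$ at the same index. For $H(g,m)$, the fact that $H(g,m)$ is a left $H$-set makes $|H(g,m)|$ a positive multiple of $|H|$ bounded by $|G| = n|H|$, giving stabilization in at most $n$ steps.

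The only real subtlety is making the conjugation-shift step in the middle paragraph precise (in particular, that conjugation of a product of subspaces by an invertible element distributes factor-by-factor). Everything else is routine bookkeeping.
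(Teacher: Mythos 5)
Your argument is correct and is essentially the paper's proof in a slightly different costume: where the paper extracts $(KaK)^{m}a = (KaK)^{m+1}$ from the hypothesis and substitutes it into the expansion of $K(a,m+2)$, you encode the same one-step shift by conjugating the equality $KK_1\cdots K_m = KK_1\cdots K_{m+1}$ by $a$ (using $aK_ia^{-1}=K_{i+1}$) and then premultiplying by $K$; these are the same calculation. The dimension count giving $m\le n$ and the transfer to $\dim_K(KaK)^m$ via the bijection $x \mapsto xa^{-m}$ likewise match the paper's (terser) reasoning, and your parallel treatment of $H(g,m)$ is the parallelism the paper leaves implicit with ``Similarly.''
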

\begin{proof} To prove the first statement, assume that
$K(a,m) = K(a,m+1)$. It suffices to show that $K(a,m+1) = K(a,m+2)$.
But $(KaK)^ma^{-m} = (KaK)^{m+1}a^{-(m+1)}$ implies $(KaK)^ma =
(KaK)^{m+1}$ and so
\begin{equation}\begin{aligned}(KaK)^{m+1}a^{-(m+1)} & =
(KaK)(KaK)^ma(a^{-(m+2)})
 \\ &= (KaK)(KaK)^{m+1}a^{-(m+2)} =
K(a,m+2).\end{aligned}\end{equation}

As for the second statement, since $\dim_K A = n$ this ascending series
must repeat for $m \leq n$ and the result follows.
\end{proof}

\begin{defn} The {\bf{height}} of $a$ is
the minimal $m_0$ such that $$K(a,m_0) = K(a,m_0+1);$$ we denote
$\hit(a) = m_0$.\end{defn}

 Notice that the height $m_0$ only depends on
the bimodule $KaK$ and we are really talking about the height
of $KaK$. Similarly, we can talk about the height of $g$ or $HgH$.
Of course, if $KaK$ is associated to $HgH$ then they have the same
height.

 The obvious question concerns the possible asymptote for
the sequence $$H(g,m) = (HgH)^{m} g^{-m} =
H(g{H}g^{-1})(g^2Hg^{-2})\cdots({g}^mHg^{-m}).$$ When this
sequence stabilizes, it has the following properties:

\begin{lem}\label{powers1}
Let $m_0$ be the height of $a$.
Let $N = H(g,m_0)$ and let $G'$ be the subgroup of $G$ generated
by $H$ and $g$.

Then $N$ is a normal subgroup of $G'$, $G'/N$ is generated by the image of $g$, and $N$ is the smallest
subgroup of $G'$ containing $H$ and normal in~$G'$.
\end{lem}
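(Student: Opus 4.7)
The plan is to reduce everything to a single combinatorial identity,
\[
H(g, m+k) \,=\, H(g, m) \cdot g^m H(g, k) g^{-m},
\]
valid for all $m, k \geq 0$, which I would verify by writing out both sides factor by factor: the last factor $g^m H g^{-m}$ of $H(g, m)$ coincides with the first factor of $g^m H(g, k) g^{-m}$, and since $g^m H g^{-m}$ is a subgroup those two adjacent copies merge into one, matching the factorization of $H(g, m+k)$. This identity is the engine of the proof; all three parts of the lemma are extracted from two of its specializations.

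First I would prove that $N$ is normalized by $g$. Setting $m=1$ and $k=m_0$ in the identity and invoking the stabilization $H(g, m_0{+}1) = N$ gives
\[
N \,=\, H(g, 1) \cdot g N g^{-1}.
\]
Since $1 \in H(g, 1)$, this forces $g N g^{-1} \subseteq N$, and finiteness of $G$ upgrades the inclusion to equality. Moreover $N$ is an $H$-biset, so for every $h \in H$ we have $hN = N = Nh$, hence $hNh^{-1} = N$. Together these show that the whole group $G' = \langle H, g\rangle$ normalizes $N$ as a subset of $G$.

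Next I would prove that $N$ is closed under multiplication by setting $m = k = m_0$ in the identity. Stabilization gives $H(g, 2m_0) = N$, and the $g$-invariance just obtained gives $g^{m_0} N g^{-m_0} = N$, so
\[
N \,=\, H(g, 2m_0) \,=\, N \cdot g^{m_0} N g^{-m_0} \,=\, N \cdot N.
\]
A nonempty finite subset of a group that is closed under multiplication is automatically a subgroup; combined with the $G'$-normalization, this yields $N \triangleleft G'$. Since $H \subseteq N$ and $G' = \langle H, g\rangle$, the quotient $G'/N$ is generated by the image of $g$.

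For the minimality statement, any normal subgroup $N' \triangleleft G'$ containing $H$ also contains $g^i H g^{-i}$ for every $i$, and being a subgroup it contains the product $H \cdot gHg^{-1} \cdots g^{m_0} H g^{-m_0} = N$. The main obstacle is the step showing that $N$ is actually a subgroup: a product of distinct conjugates of $H$ is not a subgroup in general, and closure under multiplication only drops out once the $g$-invariance of $N$ has been established, so that $g^{m_0} N g^{-m_0}$ can be replaced by $N$ inside the identity and collapse $N \cdot N$ back to $N$.
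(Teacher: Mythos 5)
Your proof takes a genuinely different route from the paper's. The paper sidesteps the issue of closure under multiplication by choosing $m_1 \geq m_0$ with $g^{m_1} = 1$, so the trailing $g^{-m_1}$ vanishes, $N = H(g,m_1) = (HgH)^{m_1}$, and $N^2 = (HgH)^{2m_1} = H(g,2m_1) = N$ follows directly from stabilization. You instead work with the recurrence $H(g,m+k) = H(g,m)\cdot g^m H(g,k)g^{-m}$, first deducing $g$-invariance and then closure under multiplication. Both arguments are comparably short and both rest on stabilization and finiteness; the paper's avoids proving $g$-invariance separately, while yours avoids picking a special exponent.

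There is one gap: you assert that $N$ is an $H$-biset and deduce $Nh = N$, but this is not immediate from $N = (HgH)^{m_0}g^{-m_0}$. The left invariance $HN = N$ is visible (the product begins with $H$), but the product ends with $g^{-m_0}$, not $H$, so $NH = N$ requires an argument. It does follow: from $(HgH)^{m_0}H = (HgH)^{m_0}$, i.e.\ $Ng^{m_0}H = Ng^{m_0}$, one gets $N(g^{m_0}Hg^{-m_0}) = N$, and conjugating by $g^{-m_0}$ together with $g^{m_0}Ng^{-m_0} = N$ gives $NH = N$. But the cleanest fix is simply to reorder: your derivation of $N \cdot N = N$ uses only the $g$-invariance from the preceding paragraph, not the $H$-biset claim; once $N$ is known to be a subgroup containing $H$, normalization by $H$ is automatic, and the unjustified $H$-biset assertion can be dropped entirely.
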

\begin{proof} Let $m_1 \geq m_0$ be such that $g^{m_1} = 1$.
Then $$N = H(g,m_1) = (HgH)^{m_1}g^{-m_1} = (HgH)^{m_1},$$ so
$$N^2 = (HgH)^{2m_1} = (HgH)^{2m_1}g^{-2m_1} = H(g,2m_1) = N,$$
proving that $N$ is a subgroup. As such, it is obviously the
subgroup generated by all the conjugates $g^iHg^{-i}$ and the rest is
clear.
\end{proof}

Let $N$ and $G'$ be as in the above lemma. Recall that $L$ is the Galois closure of $K/F$, $G = \Gal(L/F)$ and $H = \Gal(L/K)$. The groups $$G \supseteq G' \supseteq N \supseteq H \supseteq 1$$ define fields $$F \subseteq K' \subseteq E \subseteq K \subseteq L,$$
where $G'$ is the Galois group of $L/K'$ and $N$ is the Galois
group of $L/E$. Clearly $E/K'$ is a cyclic Galois extension.
In fact $E = \bigcap_i g^i(K)$ by the
Galois correspondence, so $E$ is the maximal subfield of $K$ that is stable under~$g$.

\begin{lem}\label{stab0}
 $(KaK)^m$ is stable under conjugation by $a$ if and only if $m \geq m_0$.
\end{lem}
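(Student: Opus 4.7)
The plan is to compare the $F$-dimensions on both sides of the two inclusions
\[ a(KaK)^m \sub (KaK)^{m+1} \quad\text{and}\quad (KaK)^m a \sub (KaK)^{m+1}, \]
both of which follow at once from $a \in KaK$. Since multiplication by the invertible $a$ preserves $F$-dimension, each of these inclusions is an equality iff $\dim_F(KaK)^m = \dim_F(KaK)^{m+1}$. Combining this with $\dim_F(KaK)^m = \dim_F K(a,m)$ (immediate from $(KaK)^m = K(a,m)\cdot a^m$ and invertibility of $a^m$) together with \Lref{stab00}, the dimensions $\dim_F(KaK)^m$ are strictly increasing for $m<m_0$ and constant for $m \geq m_0$. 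In particular, $\dim_F(KaK)^m = \dim_F(KaK)^{m+1}$ if and only if $m \geq m_0$.

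Granting this, the two directions fall out quickly. For $(\Leftarrow)$, if $m \geq m_0$ then the dimension equality forces both inclusions above to be equalities, so $a(KaK)^m = (KaK)^{m+1} = (KaK)^m a$, whence $a(KaK)^m a^{-1} = (KaK)^m$. For $(\Rightarrow)$, assume $a(KaK)^m a^{-1} = (KaK)^m$, so $a(KaK)^m = (KaK)^m a$. Then, using that $(KaK)^m$ is a $K$-$K$-bimodule,
\[
(KaK)^{m+1} = KaK\cdot (KaK)^m = K\cdot a\cdot K\cdot (KaK)^m = K\cdot a\cdot (KaK)^m = K\cdot (KaK)^m\cdot a = (KaK)^m \cdot a,
\]
which has $F$-dimension $\dim_F(KaK)^m$; hence $m \geq m_0$.

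The main conceptual point to keep straight is that, in contrast with the ascending chain $K = K(a,0) \sub K(a,1) \sub \cdots$, the sequence $(KaK)^m$ need not be an ascending chain of subspaces of $A$: already $K \sub KaK$ fails whenever $a \notin K$ (both are $n$-dimensional but distinct, e.g.~when $a$ merely normalizes $K$). Thus one cannot take unions or containments along the $(KaK)^m$ directly; the whole argument has to be routed through the inclusion $a(KaK)^m \sub (KaK)^{m+1}$, which uses $a \in KaK$ rather than $1 \in KaK$, combined with the dimension count supplied by \Lref{stab00}. I do not expect any further obstacle beyond remembering this distinction.
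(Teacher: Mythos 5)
Your proof is correct and takes essentially the same route as the paper: both arguments reduce the statement to the stabilization of the ascending chain $K(a,m)$ from Lemma~\ref{stab00}, using invertibility of $a$ and the identity $(KaK)^m = K(a,m)\,a^m$; you phrase the reduction through $F$-dimensions, while the paper runs the same chain of equivalences directly on the subsets ($K(a,m+1)=K(a,m)$ iff $Ka(KaK)^m a^{-1}=(KaK)^m$ iff $a(KaK)^m a^{-1}\sub(KaK)^m$). Your closing remark that $(KaK)^m$ itself need not be an ascending chain (only $K(a,m)$ is) is a useful caution, correctly observed, and is implicitly why the paper works with $K(a,m)$ rather than $(KaK)^m$ directly.
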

\begin{proof}
Indeed, $K(a,m+1) = K(a,m)$ iff $(KaK)^{m+1}a^{-1} = (KaK)^m$ by
multiplication by $a^m$ from the right, but the left-hand side is
$Ka(KaK)^{m}a^{-1}$, so we have an equality iff $a(KaK)^{m}a^{-1}
\sub (KaK)^m$.
\end{proof}

But $(KaK)^{m_0}$ need not be a subalgebra. Accordingly, we must
go a bit further. Let $m_1 \geq m_0$ be such that $g^{m_1} = 1$.

\begin{lem}\label{stab1}
$(KaK)^{m_1}$ is a
subalgebra of $A$.
\end{lem}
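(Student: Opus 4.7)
The plan is to translate the claim into the combinatorial side via the semiring isomorphism $\Phi$ of \Tref{main} and then apply the subgroup criterion of \Cref{main-sg}. By \Cref{main-powers}, the sub-bimodule $(KaK)^{m_1}$ corresponds to the $H$-biset $(HgH)^{m_1}$, so by \Cref{main-sg} it suffices to show that $(HgH)^{m_1}$ is a subgroup of $G$.

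To do so, I would compute $(HgH)^{m_1}$ directly using the decomposition
\begin{equation*}
(HgH)^{m_1} \;=\; H(g,m_1)\cdot g^{m_1},
\end{equation*}
which is just the definition of $H(g,m_1)$ rearranged. Since $m_1 \geq m_0$, \Lref{stab00} (or equivalently the definition of height) gives $H(g,m_1) = H(g,m_0) = N$. Since by choice $g^{m_1} = 1$, the product collapses to $(HgH)^{m_1} = N$, which is a subgroup of $G$ by \Lref{powers1}.

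Applying $\Phi$ back, $\Phi(N) = (KaK)^{m_1}$ is a subalgebra of $A$ by \Cref{main-sg}, as required. The argument is essentially bookkeeping once \Tref{main} and \Lref{powers1} are in place; the only mildly delicate point is checking that the identity $(HgH)^{m_1} = H(g,m_1)\cdot g^{m_1}$ is available (it follows from the fact that $g^{m_1}$ centralizes nothing in particular but simply multiplies the set on the right), together with the observation that $H(g,m_1)$ has already stabilized to the group $N$ at level $m_0 \leq m_1$. There is no serious obstacle: the heavy lifting has been done in \Tref{main}, \Cref{main-sg}, and \Lref{powers1}.
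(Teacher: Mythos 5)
Your proof is correct and follows essentially the same route as the paper: identify $(KaK)^{m_1}$ with $(HgH)^{m_1}$ via \Cref{main-powers}, observe that $(HgH)^{m_1} = N$ using $g^{m_1}=1$ and the stabilization of $H(g,m)$, invoke \Lref{powers1} for $N$ being a subgroup, and close with \Cref{main-sg}. (The ``mildly delicate point'' you flag is not actually delicate --- $(HgH)^{m_1} = H(g,m_1)\,g^{m_1}$ is just the definition of $H(g,m_1)$ rearranged --- but this does not affect correctness.)
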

\begin{proof}
As in \Lref{powers1}, let $N = (HgH)^{m_1}$ which is a subgroup of
$G$. By \Cref{main-sg}, $\Phi(N) = (KaK)^{m_1}$ is a subalgebra.
\end{proof}

Let $m \geq \hit(a)$. Then $(HgH)^{m} = N g^{m}$. By
\Lref{subcyclic}, we obtain:
\begin{prop} \label{mainseries}
Let $KaK \subset A$ be a simple sub-bimodule
corresponding to $HgH$.
Then for $m \geq \hit(a)$, $(KaK)^m = \{ x \in A \,|\,
x\ell = g^m(\ell)x \mbox{ for all } \ell \in L \}$.
\end{prop}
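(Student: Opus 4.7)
The plan is to combine \Cref{main-powers} with the stabilization supplied by \Lref{powers1} and the skew-centralizer identification of \Pref{subcyclic}; all of the heavy lifting has already been done in the preceding sections, and the proof amounts to assembling these pieces in the right order.

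First, I would unwind the hypothesis $m \geq \hit(a)$. By definition of the height together with the proof of \Lref{powers1}, the ascending sequence $H(g,m) = (HgH)^m g^{-m}$ is stationary for $m \geq \hit(a)$, with common value the normal subgroup $N = H(g,\hit(a))$ of $G' = \sg{H,g}$. Consequently $(HgH)^m = N g^m$ for all such $m$. Applying \Cref{main-powers} translates this directly to the bimodule side: $(KaK)^m = \Phi(N g^m)$.

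Second, I would identify $\Phi(N g^m)$ by invoking \Pref{subcyclic}. The prerequisites are in place: $N$ is normal in $G'$ by \Lref{powers1}, and $g^m \in G'$ automatically normalizes $N$, so $N g^m = g^m N$ is indeed an $H$-biset (a single coset of $N$ in $G'$). Taking $K'' = E$ (the fixed field of $N$ in $L$, an intermediate extension $K' \subseteq E \subseteq K$) and $\sigma = g^m$, which restricts to an automorphism of $E$, the ideal/annihilator argument sketched in the derivation of \Pref{subcyclic}, namely forming the ideal $T''$ generated by the elements $g^m(\ell) \otimes 1 - 1 \otimes \ell$ for $\ell \in L$ and computing its annihilator in $L \tensor[K] A$, applies verbatim. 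It produces exactly the skew-centralizer description asserted in the statement, with the same quantifier $\ell \in L$ as in the definition of $T''$.

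The main obstacle, really more of a bookkeeping point than a mathematical one, is that \Pref{subcyclic} as written is stated only for $\sigma$ the distinguished generator of the cyclic quotient $H'/N$, whereas here I need it for the arbitrary element $g^m$. The argument, however, uses nothing beyond $\sigma$ lying in $H'$ and normalizing $N$, both of which hold for $g^m \in G'$ by \Lref{powers1}; no other step in the derivation requires modification. The matrix-unit description of \Pref{matrixform} then confirms that the resulting skew-centralizer and the biset $Ng^m$ cut out the same subspace of $\bar A$, completing the identification.
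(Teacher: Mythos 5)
Your proof is correct and follows the same route as the paper, which derives the proposition in essentially two lines: $(HgH)^m = Ng^m$ for $m \geq \hit(a)$, then cite \Pref{subcyclic}; you rightly spell out why \Pref{subcyclic} applies to the not-necessarily-generating $\sigma = g^m$ (the annihilator and dimension argument only use that $\sigma \in G'$ normalizes $N$ and restricts to an automorphism of the fixed field, not that $\sigma$ generates the cyclic quotient). One slip you inherit from the paper rather than catch: the $L$ in the statement of the proposition, and in the definition of $T''$ preceding \Pref{subcyclic}, is a notational reuse meaning the fixed field $E = L^N \subseteq K$ (the paper even writes ``$L = K^N$'' immediately after the proposition); the Galois closure $L$ is not a subfield of $A$, and $1 \otimes \ell \in L \otimes_F K$ makes sense only for $\ell \in K$, so the quantifier in the conclusion must range over $E$ (respectively $K''$ in \Pref{subcyclic}), not the big field. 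This does not affect the substance of your argument.
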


Notice that the double coset $HgH$ determines $G' = \sg{H,HgH}$ and therefore determines $N$ (as the minimal normal subgroup of $G'$ containing $H$) and $L = K^N$. The proposition shows that $HgH$ provides explicit information on $a$:

\begin{cor}\label{aconj} Let $m_1$ and $g \in G$ be as in Lemma \ref{stab1}.
Let $a \in A$ be any element such that $KaK$ is the simple bimodule corresponding to $HgH$. Then \begin{enumerate}
\item Let $C = (KaK)^{m_1}$. Then $C = \operatorname{C}_A(L)$.
\item $a \ell = g(\ell) a$ for every $\ell \in L$.
\end{enumerate}
\end{cor}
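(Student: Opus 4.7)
The plan is to apply \Pref{mainseries} twice, first with $m = m_1$ and then with $m = m_1 + 1$, exploiting throughout the key relation $g^{m_1} = 1$.

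For part~(1), I would invoke \Pref{mainseries} directly at $m = m_1$: since $m_1 \geq m_0 = \hit(a)$, the proposition applies and gives
$$C = (KaK)^{m_1} = \{ x \in A \,:\, x\ell = g^{m_1}(\ell) x \text{ for all } \ell \in L \}.$$
Because $g^{m_1} = 1$, the defining relation degenerates to $x\ell = \ell x$ for every $\ell \in L$, which is exactly the condition $x \in \operatorname{C}_A(L)$. This yields $C = \operatorname{C}_A(L)$ at once.

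For part~(2), the strategy is to show that $a$ itself lies in the higher bimodule $(KaK)^{m_1+1}$, and then apply \Pref{mainseries} at $m = m_1 + 1$. To produce $a$ inside $(KaK)^{m_1+1}$, I would first observe that $C$ contains~$K$: the biset associated to $C$ is $(HgH)^{m_1} = Ng^{m_1} = N$, and $H \subseteq N$ by \Lref{powers1}, so \Tref{main} yields $K = \Phi(H) \subseteq \Phi(N) = C$. In particular $1 \in C$, so $a \in Ca$. Next, I would use the stabilization identity $(KaK)^{m_1+1} = (KaK)^{m_1} \cdot a = C \cdot a$, which is immediate from the equality $K(a,m_1) = K(a, m_1 + 1)$ exactly as in the proof of \Lref{stab00}. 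Hence $a \in (KaK)^{m_1 + 1}$, and applying \Pref{mainseries} with $m = m_1 + 1 \geq \hit(a)$ gives
$$a\ell = g^{m_1+1}(\ell)\, a = g(\ell)\, a \qquad \text{for every } \ell \in L,$$
again using $g^{m_1} = 1$.

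The proof is essentially bookkeeping: \Pref{mainseries} does all the heavy lifting, and the only real insight is that stabilization of the height sequence combined with $g^{m_1} = 1$ lets one translate the bimodule-level description of $C$ into the individual identity for $a$. I do not anticipate any serious obstacle; the one subtlety worth flagging is that \Pref{mainseries} requires $m \geq \hit(a)$, which is precisely why part~(2) must be deduced from $m = m_1 + 1$ via the inclusion $a \in Ca$, rather than from $m = 1$ directly (the height of $a$ may well exceed~$1$).
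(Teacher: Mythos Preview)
Your proposal is correct and follows essentially the same approach as the paper: apply \Pref{mainseries} at $m=m_1$ for part~(1), then show $a \in (KaK)^{m_1+1}$ and apply \Pref{mainseries} at $m=m_1+1$ for part~(2). The paper's version is marginally slicker in two spots: once $C = \operatorname{C}_A(L)$ is established, $1 \in C$ is immediate (no biset argument needed), and then $a \in Ca \subseteq C\cdot KaK = (KaK)^{m_1}(KaK) = (KaK)^{m_1+1}$ follows directly without invoking the stabilization identity $(KaK)^{m_1+1} = Ca$.
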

\begin{proof}
By \Pref{mainseries}, $(KaK)^{m_1} = \operatorname{C}_A(L)$. Taking $m_1+1$ gives
$$a \in \operatorname{C}_A(L) a \sub \operatorname{C}_A(L)KaK = (KaK)^{m_1+1} = \set{ x \in A \,|\, x\ell = g(\ell) x \mbox{ for all } \ell \in L}.$$
\end{proof}

\begin{cor}
 $K' = \Cent(A'),$ where $A'$ is the subalgebra of $A$ generated
by $K$ and $a$.
\end{cor}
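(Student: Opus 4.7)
The plan is to use Corollary \ref{aconj}(2), which says $a\ell = g(\ell)a$ for all $\ell \in L$, combined with the Galois correspondence applied to the group $G' = \langle H, g \rangle$. The idea is that commuting with $a$ translates, via this formula, into being fixed by $g$, while being inside $K$ is equivalent to being fixed by $H$; together, these give ``fixed by $G'$''.

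First I would establish the easy inclusion $K' \subseteq \Cent(A')$. Since $H \subseteq G'$, by Galois correspondence $K' = L^{G'} \subseteq L^H = K$, so elements of $K'$ automatically commute with all of $K$. For $k' \in K'$, Corollary \ref{aconj}(2) gives $ak' = g(k')a$; since $g \in G'$ fixes $k'$, we get $ak' = k'a$. As $A'$ is generated by $K$ and $a$, this suffices.

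For the reverse inclusion $\Cent(A') \subseteq K'$, let $x \in \Cent(A')$. Because $K \subseteq A'$, we have $x \in \operatorname{C}_A(K)$, and since $K$ is a maximal subfield of the central simple algebra $A$, $\operatorname{C}_A(K) = K$; hence $x \in K = L^H$, so $x$ is fixed by $H$. Because $x$ also commutes with $a$, applying Corollary \ref{aconj}(2) to $x \in L$ gives $xa = g(x)a$, whence $g(x) = x$. Since $x$ is fixed by both $H$ and $g$, it is fixed by any word in $H$ and $\{g, g^{-1}\}$, i.e., by all of $G' = \langle H, g \rangle$; thus $x \in L^{G'} = K'$.

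I don't anticipate a genuine obstacle here: the statement is essentially a packaging of Corollary \ref{aconj}(2) together with the Galois correspondence between subgroups of $G$ containing $H$ and subfields of $K$. The only mild subtlety is checking that ``commutes with $K$ and $a$'' is the right characterization of $\Cent(A')$ (which is immediate from $A' = F\langle K, a\rangle$), and that the $g$-invariance obtained from $x$ commuting with $a$ propagates to $G'$-invariance, which it does because $H$-invariance of $x$ is automatic from $x \in K$.
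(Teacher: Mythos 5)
Your forward inclusion ($K' \subseteq \Cent(A')$) is fine, but there is a gap in the reverse inclusion, stemming from a notational slip in the paper that you inherited. Corollary~\ref{aconj}(2) asserts $a\ell = g(\ell)a$ ``for all $\ell \in L$,'' but the Galois closure $L$ does not generally embed in $A$, and for $\ell \in K$ the element $g(\ell)$ need not lie in $K$ (nor in $A$) unless $g$ normalizes $H$. What the corollary actually proves (tracing it back through Proposition~\ref{mainseries} and Lemma~\ref{subcyclic}) is that the relation holds for $\ell$ in the field $E = L^N$ introduced after Lemma~\ref{powers1}, i.e.\ the maximal subfield of $K$ stable under $g$. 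Your argument concludes only $x \in C_A(K) = K$ and then applies the formula to $x$; but for $x \in K \setminus E$ the expression $g(x)$ is not an element of $A$, so the step ``$xa = g(x)a$'' does not type-check.

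To close the gap, you need the stronger containment $\Cent(A') \subseteq E$, not merely $\Cent(A') \subseteq K$. This is exactly what the paper's proof supplies: it uses $C = (KaK)^{m_1} \subseteq A'$ together with Corollary~\ref{aconj}(1), which gives $C = \operatorname{C}_A(E)$, and the double centralizer theorem to get $\Cent(A') \subseteq \operatorname{C}_A(C) = E$. Once $x \in E$, your computation $xa = ax = g(x)a$, hence $g(x) = x$, is valid, and combined with $x \in E = L^N$ this gives $x \in L^{\langle N, g\rangle} = L^{G'} = K'$. (In the forward direction you are fine because $K' \subseteq E$, so the formula does apply to $k' \in K'$.) Apart from this point the argument is the same as the paper's; the paper simply passes through $C$ rather than $K$ to land in the correct subfield before invoking the conjugation formula.
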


\begin{proof} Since $C \sub A'$, the centralizer of $A'$ is contained in
$\operatorname{C}_A(C) = L$, so $\operatorname{C}_A(A')$ is the
subfield of $L$ fixed by conjugation by $a$;
$$\operatorname{C}_A(A')
= L^{\sg{g}} = \bar{K}^{\sg{H,g}} = \bar{K}^{G'}.$$
\end{proof}

In the special case where $g$ normalizes $H$, we have that $N = H$ so $L = K$, and in particular $g$ is an automorphism of $K$. In this case, the condition $a \ell = g(\ell) a$ (for all $\ell \in K$) implies that the coset $Ka = KaK$ is well defined by $g$.

Note that the property that $E/K'$ was cyclic arose from the
assumption that $KaK$ was simple. One could obviously
formulate a more general result for more general $KaK$.

\section {Examples for $A$ cyclic}

We consider situations when the algebra $A$ is cyclic,
although the subfield $K$ need not be cyclic.

\subsection {K cyclic}$ $

 Assume that $F$ contains a primitive
 $n$-th root $\w$ of 1.
Throughout, $(K, \s, \beta)$ denotes the cyclic algebra with
maximal subfield $K$ cyclic over $F$ with Galois group~$\langle \s
\rangle $ and element $y$ such that $y ^n = \beta$ and $y a y^{-1} =
\s(a)$ for all $a \in K$. In particular, when $K = F[x]$ with $x^n =
\a$, we write $(K, \s, \beta)$ as the \textbf{symbol
algebra}~$(\a, \beta).$

\begin{prop}\label{mult1} Suppose that $A = (K/F,\sigma,b)$ is a cyclic algebra,
$yk = \sigma(k)y$ for all $k \in K$, and $a \in A$.
Then any subalgebra of the form $KaK$ must have the form $K[y^d]$ for some $d \,|\, n$.
\end{prop}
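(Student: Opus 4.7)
The plan is to reduce to the combinatorial statement of \Cref{main-sg} using the simplifications afforded by the cyclic hypothesis. Since $K/F$ is cyclic Galois of degree $n$, the Galois closure $L$ equals $K$, so $H = \Gal(L/K) = 1$ and $G = \Gal(K/F) = \langle \sigma \rangle$ is cyclic of order $n$. With $H$ trivial, every $H$-biset $S \sub G$ is just a subset of $G$, and each singleton $\{\sigma^i\}$ is a double coset corresponding via $\Phi$ to a simple sub-bimodule of $A$.

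Next I would match the abstract summand $K_{[\sigma^i]}$ with the concrete cyclic-algebra summand $Ky^i$. Since
\[
 k \cdot y^i \cdot k' = k\, y^i k' = k\,\sigma^i(k')\,y^i,
\]
the bimodule $Ky^i \sub A$ has left $K$-action by the identity and right $K$-action twisted by $\sigma^i$, which is exactly the bimodule structure of $K_{[\sigma^i]}$. By \Lref{L1} and the observation that isomorphic sub-bimodules of $A$ are equal as subsets, the unique sub-bimodule of $A$ isomorphic to $K_{[\sigma^i]}$ is $Ky^i$. Hence for any subset $S \sub G$, the associated sub-bimodule is
\[
 \Phi(S) = \bigoplus_{\sigma^i \in S} Ky^i.
\]

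Finally, by \Cref{main-sg}, $KaK = \Phi(S)$ is a subalgebra if and only if $S$ is a subgroup of $G$. The subgroups of the cyclic group $\langle \sigma \rangle$ of order $n$ are exactly $\langle \sigma^d \rangle$ for the divisors $d \mid n$, and the corresponding bimodule $\bigoplus_{j=0}^{n/d-1} Ky^{jd}$ is precisely $K[y^d]$, which gives the conclusion.

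The only subtle point, essentially bookkeeping, is the identification in the second paragraph of the abstract simple summand $K_{[\sigma^i]}$ with the concrete summand $Ky^i$ of the cyclic algebra. Once that identification is made, the proposition is an immediate consequence of the already-established semiring correspondence between $H$-bisets in $G$ and $K$-$K$ sub-bimodules of $A$.
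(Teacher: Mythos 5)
Your proof is correct and follows essentially the same route as the paper's: identify $H = 1$, so double cosets are singletons; invoke the semiring correspondence (you use Corollary~\ref{main-sg}, the paper cites Theorem~\ref{main} directly) to conclude the associated subset $S \subseteq G \cong \Z/n\Z$ is a subgroup; and then match $\Phi(\langle\sigma^d\rangle)$ with $K[y^d]$. Your explicit verification that $K_{[\sigma^i]}$ corresponds to $Ky^i$ (via the twisted right action $k' \mapsto \sigma^i(k')$ and the uniqueness of sub-bimodules of a given isomorphism type) is a detail the paper leaves implicit, and is a worthwhile addition.
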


\begin{proof} Here $H$ is trivial so sets of double cosets are just sets
of elements. We identify $S$ with this subset of $\Z/n\Z$ (the Galois group).
However, by Theorem \ref{main}, $S$ is closed under addition.\end{proof}

\subsection{K cyclic after adjoining roots of unity}{\ }

Let $E/F$ be a Galois extension of dimension $4$, with $\Gal(E/F) =
\set{1,\eta,\eta^2,\eta^3}$. A cyclic algebra of degree $4$ over $F$
which is split by $E$ has the form $A = E[\theta]$ where conjugation
by $\theta$ induces $\eta$, and $\theta^4 \in \mul{F}$. For a
maximal subfield which is not Galois over $F$, we take $K =
F[\theta]$, and assume that $i = \sqrt{-1} \not \in F$. Our goal is
to decompose $A$ as a $K$-$K$ bimodule. The Galois closure of $K$ is $L = K[i]$,
with $G = \Gal(L/F) = \sg{\s,\tau}$, where $\s \co \theta \to
i\theta,\, i \mapsto i$;\, $\tau \co \theta \mapsto \theta,\, i
\mapsto -i$. We calculate that $\tau\sigma = \sigma^{-1}\tau$
so $G$ is dihedral. The Galois group of $L$ over $K$ is $H =
\sg{\tau}$. Also, $E' =E[i]$ is cyclic over $F' = F[i]$, so there is
a Kummer generator $u \in E'$ for which $\theta u \theta^{-1} = i
u$. In particular $\theta u^2 \theta^{-1} = -u^2$ and $u^2 \in
E^{\eta^2}$.

As before, we extend scalars to obtain explicit idempotents.
Extending the bimodule isomorphism $K \tensor[F] K \isom A$, we
have $L \tensor[K] (K \tensor K) \isom L \tensor[K] A$.
In general this would not have a relevant structure as an algebra.
However, taking $F' = F[i]$, we have that $L = F' \tensor[F]
K$, and therefore $L \tensor[K] A = (F' \tensor[F] K)
\tensor[K] A = F' \tensor[F] A$, which conveniently is an algebra.
The component of $L \tensor[K] A$ corresponding to the coset
$g H \in G/H$ is defined in \eq{LL+}, which can be rewritten as
$\set{ \alpha \,:\, g(\theta)\alpha = \alpha \theta}$. There are
four cosets. Over $F'$, the component corresponding to the coset
$\sigma^jH$ is $\set{\alpha \in F'A \,:\, \sigma^j(\theta)\alpha =
\alpha \theta} = \bar{K} u^{-j}$. In $A$ itself, the components
corresponding to $H$ and $\sigma^2 H$, which are in fact double
cosets, are $K$ and $Ku^2$ respectively. Note that together $H' = H
\cup \s^2 H$ is a subgroup of $G$ containing $H$, which stabilizes
the subfield $K' = E^{\s^2}$. As shown in the example above, $H'$
corresponds to the sum of the components $K + Ku^2 = K[u^2] =
\operatorname{C}_A(K')$. The final component corresponds to the
double coset $H\s H$; over $F'$, this component decomposes as
$F'Ku+F'Ku^3$; but $u \not \in A$, so over $F$ we only have the
single component $Ku+Ku^2$. To summarize, the bimodule decomposition
of $A$ over $F$ is $A = K \oplus K u^2 \oplus (Ku+Ku^3)$.


\section{Characteristic coefficients and trace of powers}\label{sec:7}

Let $F$ be a field of arbitrary characteristic, and $A$ an $F$-algebra which is contained in $n \times n$ matrices over some extension of $F$.
Define $\rho_i\co A \to F$ to be the polynomial functions such that $(-1)^i\rho_i(a)$
is the coefficient of $t^{n-i}$ in the Cayley Hamilton polynomial $p_a(t)$ of $a$.

The well known Newton's identities are, for $k = 1,\dots,n$,
\begin{equation}\label{Newton}
k\rho_k(a) = \sum_{i=1}^{k}(-1)^{i-1}\rho_{k-i}(a)\tr(a^i).
\end{equation}

Fix an $F$-vector subspace $V \sub A$.
\begin{prop}\label{obvious}
Fix some $r \leq n$. Consider the conditions
\begin{enumerate}
\item[\TR] $\rho_k(a) = 0$ for $1 \leq k \leq r$ and every $a \in V$;
\item[\TT] $\tr(a^k) = 0$ for $1 \leq k \leq r$ and every $a \in V$.
\end{enumerate}
Then \TR $\implies$\TT\,  (and in characteristic zero, \TR $\Longleftrightarrow$ \TT).
\end{prop}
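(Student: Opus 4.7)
The strategy is entirely local: fix an arbitrary $a \in V$ and play Newton's identities \eq{Newton} off against themselves, so the subspace hypothesis plays no real role beyond quantifying over elements.

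For the forward direction, I would argue by induction on $k \in \{1,\dots,r\}$, showing that $\tr(a^k)=0$ under hypothesis \TR. For $k=1$, since $\rho_1(a) = \tr(a)$, the claim is immediate. For the inductive step, fix $k \le r$ and expand Newton's identity
\[
k\,\rho_k(a) \;=\; \sum_{i=1}^{k}(-1)^{i-1}\rho_{k-i}(a)\tr(a^i).
\]
By hypothesis \TR, $\rho_k(a)=0$, so the left-hand side vanishes. On the right-hand side, the factor $\rho_{k-i}(a)$ vanishes for $1 \le i \le k-1$ (since then $1 \le k-i \le r$), so the only surviving term is $i=k$, giving $(-1)^{k-1}\rho_0(a)\tr(a^k) = (-1)^{k-1}\tr(a^k)$. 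Hence $\tr(a^k)=0$. Crucially, this argument does not require dividing by $k$, so it is characteristic-free: the integer $k$ on the left-hand side multiplies the already-zero value $\rho_k(a)$.

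For the converse in characteristic zero, the same identity is run in reverse by induction on $k$. Assuming \TT\ and that $\rho_j(a)=0$ for all $1 \le j < k$, Newton's identity again collapses to
\[
k\,\rho_k(a) \;=\; (-1)^{k-1}\rho_0(a)\tr(a^k) \;=\; (-1)^{k-1}\tr(a^k) \;=\; 0,
\]
since all lower $\rho_{k-i}(a)$ vanish by induction. Dividing by $k$, which is nonzero in characteristic zero, gives $\rho_k(a)=0$, completing the induction.

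There is essentially no obstacle; the whole proof is a bookkeeping observation on Newton's identities. The only subtlety worth flagging is the asymmetry between the two directions: the forward implication needs no division by $k$ because $\rho_k(a)$ itself is assumed to be zero, whereas the converse genuinely requires inverting $k$, which is precisely why the equivalence is restricted to characteristic zero.
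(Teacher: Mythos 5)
Your proof is correct and essentially identical to the paper's, which also observes that since $\rho_0(a)=1$, Newton's identity expresses $\tr(a^k)$ as a combination of $k\rho_k(a)$ and products $\rho_{k-i}(a)\tr(a^i)$, so \TR\ kills the right-hand side without any division, while the converse requires inverting $k$. One tiny remark: your forward "induction" is actually vacuous, since \TR\ directly annihilates every factor $\rho_{k-i}(a)$ for $1 \le i \le k-1$ as well as $\rho_k(a)$ itself, so no inductive hypothesis is ever invoked.
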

\begin{proof}
Since $\rho_0(a) = 1$, \eq{Newton} expresses $\tr(a^{r})$ as a linear combination of $r\rho_r(a)$ and the products $\rho_{r-k}(a)\tr(a^{k})$.
\end{proof}

Because of the presence of the integer $k$ at the left hand side of the above identity, we cannot obtain the converse statement, that if $\tr(a^i) = 0$ for $i\leq k$ then $\rho_k(a) = 0$ as well, unless we assume $\mychar F = 0$. Instead, we prove a characteristic-free multilinear version.
For any $k \leq r$, let
$$\tr(a_1,\dots,a_k) = \sum_{\eta \in S_\set{2,\dots,k}} \tr(a_1a_{\eta(2)}\cdots {a_{\eta(k)}}),$$
a sum of $(k-1)!$ traces of monomials, which is symmetric in the $k$ variables because the trace is invariant under cyclic shifts. Furthermore, let $\rho_k(a_1,\dots,a_k) \in F$ denote the coefficient of $t_1\cdots t_k$ in $\rho_k(a_1t_1+\cdots+a_k t_k)$, where the characteristic coefficient is taken in the extension $F[t_1,\dots,t_r] \tensor[F] A$. For $k = 1$ the newly defined $\tr(a_1)$ and $\rho_1(a_1)$ coincide with the usual definitions.

\begin{thm}\label{mult10}
Fix $r \leq n$. The following two conditions are equivalent:
\begin{enumerate}
\item[{\TRL}] $\rho_k(a_1,\ldots,a_k) = 0$ for all $1 \leq k \leq r$ and every $a_1,\dots,a_k \in V$;
\item[{\TTL}] $\tr(a_1,\ldots,a_k) = 0$ for all $1 \leq k \leq r$ and every $a_1,\dots,a_k \in V$.
\end{enumerate}
\end{thm}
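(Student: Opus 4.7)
The plan is to establish two multilinear Newton-type identities with integer coefficients: an expression of $\rho_k(a_1,\ldots,a_k)$ as a signed sum of products of multilinear traces, and the inverse expression of $\tr(a_1,\ldots,a_k)$ as a signed sum of products of multilinear $\rho_j$'s. Specifically I claim that for each $k \geq 1$,
\begin{equation*}
\rho_k(a_1,\ldots,a_k) \;=\; \sum_{\pi} (-1)^{k-|\pi|} \prod_{B \in \pi} \tr\bigl((a_i)_{i \in B}\bigr),
\end{equation*}
and
\begin{equation*}
\tr(a_1,\ldots,a_k) \;=\; \sum_{\pi} (-1)^{k-|\pi|} (|\pi|-1)!\, \prod_{B \in \pi} \rho_{|B|}\bigl((a_i)_{i \in B}\bigr),
\end{equation*}
both sums ranging over set partitions $\pi$ of $\{1,\ldots,k\}$; call these Identity~A and Identity~B. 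Granting them, the theorem is immediate: under \TTL every factor $\tr((a_i)_{i\in B})$ in Identity~A vanishes (each $|B|\leq k\leq r$), so $\rho_k(a_1,\ldots,a_k)=0$; dually, Identity~B yields \TRL$\,\Rightarrow\,$\TTL.

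To derive Identity~A I would first work in characteristic zero and extract the coefficient of $t_1\cdots t_k$ from both sides of
\begin{equation*}
\det\bigl(I + \textstyle\sum_{i=1}^k t_i a_i\bigr) \;=\; \exp\Bigl(\sum_{j \geq 1} \tfrac{(-1)^{j-1}}{j}\, \tr\bigl((\textstyle\sum_i t_i a_i)^j\bigr)\Bigr).
\end{equation*}
The left side gives $\rho_k(a_1,\ldots,a_k)$ by definition of the polarization. On the right, expanding $\exp(Y)=\sum_n Y^n/n!$, each contribution to $t_1\cdots t_k$ is indexed by an ordered set composition of $\{1,\ldots,k\}$ into $n$ blocks; the $1/n!$ from the exponential cancels the $n!$ orderings of a given unordered partition, and for each block $B$ the $1/|B|$ from the logarithm series cancels the factor $|B|$ appearing in the coefficient of $t_B$ in $\tr((\sum_i t_i a_i)^{|B|})$, by cyclic invariance of the trace. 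What survives is the sign $(-1)^{|B|-1}$ per block, which multiplies to $(-1)^{k-|\pi|}$ overall, producing Identity~A. Identity~B is derived identically from $\tr\log(I+\sum t_i a_i)=\log\det(I+\sum t_i a_i)$, with the outer logarithm now producing the extra factor $(|\pi|-1)!$ because only the internal $n!$ gets absorbed.

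Having verified the identities over $\Q$, I would observe that both sides of each are polynomial functions of the matrix entries of $a_1,\ldots,a_k$ with coefficients in $\Z$ (being assembled from $\det$, $\tr$, signs, and integer factorials). A polynomial identity over $\Q$ between two polynomials with integer coefficients is an identity over $\Z$, and hence specializes to any base field $F$. Applying the identities to elements of $V$ then gives the theorem.

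The main obstacle is the combinatorial bookkeeping in the derivation above: one must verify that the two rational denominators $1/n!$ and $1/|B|$ produced by the formal $\exp$ and $\log$ series really do cancel against the matching multiplicities ($n!$ orderings of a partition, and $|B|$ cyclic rotations of $t_B$ in $\tr((\sum_i t_i a_i)^{|B|})$). Conceptually this cancellation is exactly what rescues the argument from the obstruction noted after Proposition~\ref{obvious}: the factor $k$ that spoils the single-variable Newton recursion $k\rho_k = \sum(-1)^{i-1}\rho_{k-i}\tr(a^i)$ is absorbed, in the polarized picture, by the $k$ cyclic rotations of $t_1\cdots t_k$ inside $\tr((\sum_i t_i a_i)^k)$, which is why the polarized equivalence is truly characteristic-free.
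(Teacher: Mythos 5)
Your argument is correct, and it takes a genuinely different route from the paper. The paper proves only one direction's identity, namely Theorem~\ref{NI+}, which is precisely your Identity~A (note $(-1)^{\Delta}=(-1)^{\sum_i(|S_i|-1)}=(-1)^{r-|\Delta|}$), but by a different method: it multilinearizes the single-variable Newton recursion $r\rho_r=\sum_{k}(-1)^{k-1}\rho_{r-k}\tr(a^k)$ over the ring $\Z[x_{i,j,s},t_s]$, substitutes the inductive formula for $\rho_{r-|S|}$, and observes that $\sum_{S\in\Delta}|S|=r$, so the troublesome factor $r$ appears on both sides and cancels in a domain of characteristic zero, giving the identity over $\Z$ directly. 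The paper then does not need your Identity~B: for \TRL$\Rightarrow$\TTL\ it simply reads $\tr_T$ off Identity~A (the one-block partition contributes $\pm\tr(X_1,\dots,X_r)$, everything else is a product of smaller $\tr_S$'s handled by an inner induction on $r$). Your approach, extracting $[t_1\cdots t_k]$ from $\det(I+\sum t_ia_i)=\exp(\tr\log(I+\sum t_ia_i))$ and its logarithm, obtains both identities at once in a closed symmetric form, with the $1/n!$ and $1/|B|$ denominators from the formal $\exp$ and $\log$ series canceling exactly against the block-ordering and cyclic multiplicities; the price is that the computation lives a priori over $\Q$, so you must then invoke that both sides are $\Z$-coefficient polynomials in the matrix entries (which is true: $(|\pi|-1)!$ and the signs are integers, and $\rho_m$, $\tr$ have integer coefficients) to descend to $\Z$ and then to arbitrary $F$. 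What your route buys is the explicit inverse formula (Identity~B) and a more conceptual explanation of why the polarized statement, unlike Proposition~\ref{obvious}, is characteristic-free; what the paper's route buys is working over $\Z$ from the outset and needing only one of the two identities.
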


\begin{rem}\label{infup}
\TR $\implies$\TRL\ when $F$ is infinite.
Indeed, consider $V[t] = V \otimes_F F[t_1,\ldots,t_k]$ and note that
$\rho_k(a) = 0$ for all $a \in V[t]$ since $F$ is infinite. Applying this
to $a = t_1a_1 + \ldots + t_ka_k$ we have that $\rho_k(a_1,\dots,a_k) = 0$. 
\end{rem}

In order to prove \Tref{mult10}, we need a version of \eq{Newton} where $k$ cancels. Consider the polynomial ring
$$R_0 = \Z[x_{i,j,s} \,|\, 1 \leq s \leq r,\  1 \leq i,j \leq n]$$
in $rn^2$ variables, and $R = R_0[t_1,\dots,t_r]$. In $\M[n](R)$, form the generic matrix
$X_s$ with $i,j$ entry $x_{i,j,s}$. Next form the generic sum
$X = \sum_s t_sX_s$. Let $T$ be the set $\{1,\ldots,r\}$ and let
$S \subseteq T$ (nonempty). Define $t^S$ to be the product of the $t_s$
where $s \in S$, so $t^T = t_1\cdots{t_r}$. Define $X_S = \sum_{s \in S} t_sX_s$. We adopt the following notation from \cite{Wilf}: $[t^S]p$ is the coefficient of the monomial $t^S$ in a polynomial $p \in R$.
Note that if $k = \card{S}$ then $[t^S]\rho_k(X_S) = [t^S]\rho_k(X)$.

For $S = \set{s_1,\dots,s_k}$, let $\tr_S = \tr(X_{s_1},\dots,X_{s_k})$, as defined above.
Clearly, $[t^S]\tr(X_S^k) = k \tr_S$, where we use the fact that
$\tr(x_1\cdots{x_k}$) remains invariant under cyclic shifts of the variables.
Once again we have that $k\tr_S = [t^S]\tr(X^k)$. Note that, as a special
case, $[t^T]\tr(X^r) = r \tr_T$.

We are interested in an identity for the multilinearization of
$\rho_r(X_T)$. To this end we multilinearize $\rho_{r-k}(x)\tr(x^k)$. Notice that for any two polynomials $p$ and $p'$,
$$[t^T](pp') = \sum_{S} [t^S]p \cdot [t^{T-S}]p',$$
where the sum is over all subsets $S \sub T$. In particular
\begin{eqnarray*}
{} [t^T]\rho_{r-k}(X)\tr(X^k) & = & \sum_S [t^S]\tr(X^k) \cdot [t^{T-S}]\rho_{r-k}(X) \\
& =  &  \sum_S k \tr_S \cdot [t^{T-S}]\rho_{r-k}(X_{T-S}),
\end{eqnarray*}
where the sum being over all $S \subset T$ with $|S| = k$, because $\tr(X^k)$ is homogeneous of degree $k$.

Substitute $X$ for $a$ in \eq{Newton}. Taking $t^T$ terms (and combining all the subsets $S$), we have
\begin{eqnarray}\label{rhorN}
[t^T]r\rho_r(X) & = & \sum_{k=1}^{r}(-1)^{k-1} [t^T]\rho_{r-k}(X)\tr(X^{k}) \nonumber \\
& = & \sum_{k=1}^{r}(-1)^{k-1} \sum_{\card{S} = k} k \tr_S \cdot [t^{T-S}]\rho_{r-k}(X_{T-S}) \nonumber \\
& = & \sum_{S\neq \emptyset} (-1)^{\card{S}-1} \card{S} \tr_S \cdot [t^{T-S}]\rho_{r-\card{S}}(X_{T-S}).
\end{eqnarray}

Let $\Delta = S_1 \cup \ldots \cup S_m$ be a partition of $T$ into nonempty parts.
Set $(-1)^{\Delta} = (-1)^{\sum_i (\card{S_i}-1)}$.
For this $\Delta$ we define $\tr_{\Delta} = \prod_i \tr_{S_i}$.
We claim:

\begin{thm}\label{NI+}
$\rho_r(X_1,\dots,X_r) = \sum_{\Delta} (-1)^{\Delta}\tr_{\Delta}$, where the sum is over all partitions of $T$.
\end{thm}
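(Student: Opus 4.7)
\smallskip
\noindent\textbf{Proof plan.} The plan is to induct on $r$, using identity \eqref{rhorN} as the engine. For the base case $r = 1$, the unique partition of $T = \set{1}$ is $\Delta = \set{\set{1}}$, for which $(-1)^\Delta = 1$ and $\tr_\Delta = \tr(X_1)$, matching the trivial identity $\rho_1(X_1) = \tr(X_1)$.

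For the inductive step I would apply the induction hypothesis to each factor $[t^{T-S}]\rho_{r-\card{S}}(X_{T-S})$ on the right-hand side of \eqref{rhorN}, with the convention that for $S = T$ this factor equals $\rho_0 = 1$, corresponding to the empty partition with $(-1)^{\emptyset} = 1$ and $\tr_{\emptyset} = 1$. This rewrites the right-hand side as a double sum over pairs $(S, \Delta')$ with $\emptyset \neq S \subseteq T$ and $\Delta'$ a partition of $T - S$. Reindexing by the partition $\Delta = \set{S} \cup \Delta'$ of $T$ with a distinguished block $S$, and using the sign identity $(-1)^{\card{S}-1}(-1)^{\Delta'} = (-1)^\Delta$ together with $\tr_S \cdot \tr_{\Delta'} = \tr_\Delta$, the identity collapses to
$$r\,\rho_r(X_1,\dots,X_r) \,=\, \sum_{\Delta} (-1)^\Delta \tr_\Delta \sum_{S \in \Delta} \card{S} \,=\, r \sum_{\Delta} (-1)^\Delta \tr_\Delta,$$
since the block sizes of any partition of $T$ sum to $r$.

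The main obstacle is cancelling the factor $r$, which is illegitimate over a field of characteristic dividing $r$. To handle this I would interpret the whole derivation universally: the generic matrices $X_s$ have entries in $R_0 = \Z[x_{i,j,s}]$, and \eqref{rhorN} is an identity in the torsion-free ring $R_0[t_1,\dots,t_r]$. Cancellation of $r$ is therefore legitimate at the universal level, and the resulting identity in $R_0$ specializes to any $F$-algebra over any field $F$, completing the proof.
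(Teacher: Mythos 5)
Your proof is correct and follows essentially the same route as the paper: induction on $r$, substitution of the inductive hypothesis into identity \eqref{rhorN}, reindexing over partitions $\Delta = \set{S} \cup \Delta'$, the same sign bookkeeping, and cancellation of $r$. The only difference is that you spell out explicitly why cancelling $r$ is legitimate (the computation lives in the torsion-free ring $R_0[t_1,\dots,t_r]$ with $R_0 = \Z[x_{i,j,s}]$), which the paper leaves implicit since its whole setup is already over $\Z$.
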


\begin{proof}
We prove this by induction on $r$. If $r = 1$ this just says
that $\rho_1(X_1) = \tr(X_1)$.
Assume the result for all $k < r$.
We start with the formula \eq{rhorN} for $\rho_r(X_T)$ and substitute
the expressions we have for $\rho_{r - |S|}(X_{T - S})$ by induction.
Note that if $\Delta'$ is a partition of $T - S$, and $\Delta$ is
$\Delta'$ with $S$ adjoined, then $(-1)^{\Delta} = (-1)^{\Delta'}(-1)^{|S|-1}$. We can thus compute:
\begin{eqnarray*}
{}[t^T]r \rho_r(X_T) &  = & \sum_{S\neq \emptyset} (-1)^{\card{S}-1} \card{S} \tr_S \cdot [t^{T-S}]\rho_{r-\card{S}}(X_{T-S}) \\
&  = & \sum_{S\neq \emptyset}
(-1)^{\card{S}-1}\card{S} \tr_S
\sum_{\Delta' \vdash T-S} (-1)^{\Delta'}   \tr_{\Delta'}\\
&  = & \sum_{S\neq \emptyset}
\sum_{\Delta' \vdash T-S} (-1)^{\card{S}-1}(-1)^{\Delta'} \card{S} \tr_S  \tr_{\Delta'}\\
&  = & \sum_{\Delta \vdash T} \sum_{S \in \Delta} (-1)^{\Delta} \card{S} \tr_S  \tr_{\Delta -S}\\
&  = & \sum_{\Delta \vdash T} (-1)^{\Delta} \tr_{\Delta} \cdot \sum_{S \in \Delta}  \card{S}  \\
&  = & r \sum_{\Delta \vdash
 T} (-1)^{\Delta} \tr_{\Delta},
\end{eqnarray*}
and the $r$'s cancel.
\end{proof}

\begin{proof}[Proof of \Tref{mult10}]
\TTL $\implies$\TRL: Fix a specialization of $\M[n](R_0)$ by sending $X_s$ to arbitrary elements of $V$. By assumption we have that $\tr_S = 0$ for every subset $S \sub T$, so $\tr_\Delta = 0$ for any partition $\Delta$ or $T$. By \Tref{NI+}, we obtain $\rho_r(X_1,\dots,X_r) = 0$. \TRL $\implies$\TTL: same argument by induction on $r$, since \Tref{NI+} presents $\tr(X_1,\dots,X_r)$ as a linear combination of $\rho_r(X_1,\dots,X_r)$ and products of values of the form $\tr_S$ for $\card{S} < r$.
\end{proof}

\begin{rem} In characteristic zero the conditions \TR,\TRL,\TT,\TTL\ are equivalent. More precisely, we have:
\begin{enumerate}
\item[1)] if $(r-1)!$ is invertible then \TTL $\implies$\TT;
\item[2)] if $r!$ is invertible then \TT $\implies$\TR\ and the four conditions coincide.
\end{enumerate}
Indeed, if $k$ is invertible then $\tr(a^k) = 0$ implies $\rho_k(a) = 0$ by Newton's formula. If $(k-1)!$ is invertible then $\tr(a_1,\dots,a_k) = 0$ implies $\tr(a^k) = 0$ by taking $a_1=\cdots=a_k = a$. The claim follows by ranging over all $1 \leq k \leq r$.
\end{rem}

\begin{exmpl}
We show that the conditions \TT\ and \TTL\ are independent when $\mychar F = 2$ and $r = 3$. We take $V$ to be a space of diagonal matrices in $\M[6](F)$.
Notice that $\tr(a_1,a_2) = \tr(a_1a_2)$ and $\tr(a_1,a_2,a_3) = \tr(a_1a_2a_3)+\tr(a_1a_3a_2)$. Since elements of $V$ commute, $\tr(a_1,a_2,a_3) = 0$ automatically.

\TTL $\ \ \not\!\!\! \implies$\TT:
Fix some $\alpha \neq 0,1$ in $F$, and let $\alpha' = \alpha+1$. Let $V$ be spanned by the matrices with diagonals $(1,0,0,\, 0,\alpha,\alpha')$,  $(0,1,0,\,0,\alpha',\alpha)$, and  $(0,0,1,\,1,0,0)$. Then $\tr(a_1) = 0$ and $\tr(a_1a_2) = 0$ for every $a_1,a_2 \in V$, so \TTL\ holds. However $1+\alpha^3+\alpha'^3 = \alpha\alpha' \neq 0$, so \TT\ fails.

\TT $\ \ \not\!\!\! \implies$\TTL:
For the converse, assume $\rho \in F$ is a primitive third root of unity. Let $V$ be spanned by the matrices with diagonals $(1,0,0,\,1,\rho,\rho)$,  $(0,1,0,\,\rho,1,\rho)$ and $(0,0,1,\,\rho,\rho,1)$. Then $\tr(a_1) = 0$ for every $a_1 \in V$, which implies $\tr(a_1^2) = \tr(a_1)^2 = 0$; $\tr(a_1^3) = 0$ holds by computation, 
which confirms \TT. However the identity $\tr(a_1a_2) = 0$ fails, and so does \TTL.
\end{exmpl}


\section{Criteria for $KaK$ to be a Kummer space}

As assumed throughout this paper,
$K \subset A$ is a maximal separable subfield, and  $G\supset H$
are the Galois groups of $L/F$ and $L/K$ respectively, where $L/F$
is the Galois closure of $K/F$. We set $n = [K\,{:}\,F]$. An $F$-vector space $V \sub A$ is {\bf{Kummer}} if for every $v\in V$, $\rho_i(v) = 0$ for $i \in \set{1,\dots,n-1}$, so in particular $v^n \in F$.


If $aKa^{-1} = K$, and $a$ induces an automorphism $\s \ne 1$ on
$K$ which generates the Galois group of $K/F$, then $a^n \in
C_A(K)^\s= K^\s = F$ but $a^d \not \in F$ for a proper divisor $d \,|\, n$, implying $\rho_i(a) = 0$ for each $1 \le i \le n-1$. Since we may replace $a$ by any element of $Ka$, it then follows that $Ka$ is a Kummer subspace. We seek to prove a converse.

We first consider the question when is $V = KaK$ a Kummer subspace (i.e. \TR\ of \Pref{obvious} for $r = n$).
Let ${\mathcal H}$ be be the $H$-$H$ biset of $G$ associated to $KaK$.
For convenience, we write the coset $gH$ as $g$ when it appears in a subscript.

\begin{prop}\label{P41}
The following are equivalent.

a) For all $x \in KaK$ and all $1 \leq k \leq r$, $\rho_k(x) = 0$.

b) For all $1 \leq k \leq r$ there are no $g_1,\ldots,g_k \in {\mathcal H}$ such that
$g_1g_2\cdots{g_k} = 1$.\end{prop}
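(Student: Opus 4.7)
The strategy is to read both sides of the equivalence off the matrix-unit description afforded by Proposition~\ref{matrixform}. Extending scalars to $\bar A = L\otimes_F A\cong\M[n](L)$, the subspace $\bar V := L\otimes_F(KaK)$ is the $L$-span of the matrix units $e_{g'H,g''H}$ with $(g')^{-1}g''\in\mathcal H$. Associate to $\mathcal H$ the directed graph $\Gamma$ on $G/H$ with an edge $g'H\to g''H$ exactly on these pairs; the $H$-biset property makes this well defined on cosets. I first check that (b) is equivalent to the absence of directed closed walks of length $\le r$ in $\Gamma$: given $h_1,\dots,h_k\in\mathcal H$ with $h_1\cdots h_k=1$, the cosets $(h_1\cdots h_{j-1})H$ form a closed walk; conversely, a closed walk $v_1H\to\cdots\to v_kH\to v_1H$ yields $h_i=v_i^{-1}v_{i+1}\in\mathcal H$ whose product lies in $H$, which one normalizes to $1$ by absorbing it into $h_k$ using right $H$-closure of $\mathcal H$. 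Since every closed walk contains a simple directed cycle of no greater length, (b) is equivalent to $\Gamma$ having no simple directed cycle of length $\le r$.

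For (b)$\implies$(a), any $M\in\bar V$ is supported on the edges of $\Gamma$, and the principal-minor expansion
$$\rho_k(M) \;=\; \sum_{|I|=k}\,\sum_{\sigma\in S_I}\operatorname{sgn}(\sigma)\prod_{i\in I}m_{i,\sigma(i)}$$
presents $\rho_k(M)$ as a signed sum indexed by disjoint unions of directed cycles in $\Gamma$ with total length $k\le r$. Under (b) no summand survives, so $\rho_k$ vanishes identically on $\bar V$ and a fortiori on $KaK$.

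For the contrapositive of (a)$\implies$(b), given a directed cycle $g_1H\to\cdots\to g_\ell H\to g_1H$ in $\Gamma$ of length $\ell\le r$, I set
$$M \;:=\; \sum_{i=1}^{\ell}e_{g_iH,\,g_{i+1}H}\in\bar V \qquad(g_{\ell+1}:=g_1).$$
The only nonvanishing principal $\ell$-minor of $M$ is the cyclic permutation matrix on $\{g_1H,\dots,g_\ell H\}$, of determinant $(-1)^{\ell-1}$, so $\rho_\ell(M)\ne 0$. Thus $\rho_\ell$, regarded as a degree-$\ell$ polynomial function on the $F$-vector space $V=KaK$, is not identically zero after extension of scalars to $L$; provided $F$ is infinite, Zariski density produces an $F$-point $x\in KaK$ with $\rho_\ell(x)\ne 0$, contradicting~(a).

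The main delicate step is this descent from a nonvanishing $L$-polynomial to a nonzero $F$-point, which can fail over small finite fields. I would sidestep it by routing the implication through Theorem~\ref{mult10}: the same cycle-cover analysis, applied to the monomial traces $\tr(x_1\cdots x_k)$ (which are sums over length-$k$ closed walks in $\Gamma$ weighted by products of matrix entries), shows (b) is equivalent to the vanishing of these traces on $\bar V$, and by $F$-linearity of $\tr$ to their vanishing on $V$, i.e.\ to condition~\TTL\ for $KaK$. Theorem~\ref{mult10} then converts this to the multilinear identity~\TRL, from which a final specialization argument recovers~(a) uniformly.
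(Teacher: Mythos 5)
Your argument is correct, and in the direction $(b)\Rightarrow(a)$ it is genuinely different from and cleaner than the paper's. Where the paper reduces to a generic matrix over $\Z$, passes to characteristic $0$, and argues via $\tr(x^r)$, you read $\rho_k$ off the principal-minor/permutation expansion directly: every nonzero monomial $\prod_{i\in I} m_{i,\sigma(i)}$ forces each cycle of $\sigma$ to be a directed cycle of $\Gamma$ of length $\le k\le r$, which $(b)$ forbids outright. This is characteristic-free, needs no induction, and dispenses entirely with the detour through traces for that implication. In the direction $(a)\Rightarrow(b)$ your cycle matrix $M$ plays the role of the paper's tuple $v_1,\dots,v_k$ (which they feed into $\tr(v_1,\dots,v_k)$), and extracting a \emph{simple} cycle replaces the paper's inductive argument that the intermediate cosets $\tau g_1\cdots g_{i-1}$ are distinct; the two mechanisms accomplish the same thing.

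One caveat: your claimed ``sidestep'' of the Zariski-density step via \Tref{mult10} does not actually close the gap you flag. The chain you propose gives (b)~fails $\Rightarrow$ \TTL~fails on $V$ $\Rightarrow$ \TRL~fails on $V$, but to conclude that (a) $=$ \TR~fails you still need \TR$\,\Rightarrow\,$\TRL, which is exactly \Rref{infup} and requires $F$ infinite (the obstruction is the factor $k!$ appearing in $\rho_k(a,\dots,a)=k!\,\rho_k(a)$). That said, the paper's own proof of this direction explicitly writes ``Since $F$ is infinite,'' so you are in the same position they are; your first Zariski-density argument is the honest one and matches the hypothesis the paper actually uses. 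You should simply drop the claim of uniformity rather than assert a specialization step you cannot supply.
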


\begin{proof} Assume a).
By way of contradiction, suppose such
$g_1,\ldots,g_k$ exist. By induction we can assume that b) holds
for all $k < r$. Since $F$
is infinite, it is also true that $\tr(x^k) = 0$ for all $x \in \bar
Ka\bar K$, where, as always $\bar{K} = L \tensor K$ and $L$ is the Galois closure of $K/F$. Choose any $\tau \in G$ and set $v_i =
e_{{\tau}g_1g_2\cdots{g_{i-1}},{\tau}g_1g_2\cdots{g_{i-1}}g_i}$. Of
course
$({\tau}g_1g_2\cdots{g_{i-1}})^{-1}({\tau}g_1g_2\cdots g_{i-1}g_i) =
g_i \in {\mathcal H}$ so $v_i \in \bar Ka\bar K$. Also, $v_1 =
e_{\tau,\tau{g_1}}$ and $v_k = e_{{\tau}g_1\cdots{g_{k-1}},\tau}$.
Then $v_1\cdots{v_k} = e_{\tau,\tau}$. Since $g_i\cdots g_j = 1$ for $i \leq j$ can only hold when $i = 1$ and $j = k$ by assumption, the elements $\tau g_1\cdots g_{i-1}$ ($i = 1,\dots,k$) are distinct. Therefore, the only non-zero product $v_1v_{\eta(2)}{\cdots}v_{\eta(k)} \not = 0$, for a permutation $\eta \in S_\set{2,\dots,k}$, is obtained when $\eta$ is the identity.
Thus $\tr(v_1,\ldots,v_k) = 1$, contrary to the condition \TTL\ of \Tref{mult10}; which follows from \TR\ by that theorem and \Rref{infup}, a contradiction.

Conversely, assume b). Let $R = \Z[x_{g,g'}\,|\, g,g' \in G/H]$
be the polynomial ring in $n^2r$ variables. We write $X$ to be the matrix
with $x_{g,g'}$ in the $(g,g')$ entry when $g^{-1}g' \in {\mathcal H}$
and 0 otherwise. In effect, $X$ is a generic element of $\bar K{a}\bar K$ but over $\Z$.
It suffices to show $\rho_k(X) = 0$ because $X$ specializes to all
elements of $\bar K{a}\bar K$. Thus we reduce to the case $F$ has characteristic 0.
(The reader may object that there is no $K$ etc. over $\Z$. There are two answers,
one being that all the essential properties of $\bar K{a}\bar K$ are present
over $\Z$, or alternatively we can embed $\M[n](\Z)$ into some $\bar A$ so that the
matrix idempotents are preserved.)

In characteristic 0 it suffices to prove that $\tr(x^k) = 0$ for all $1 \leq k \leq r$.
By induction it suffices to prove $\tr(x^r) = 0$.
Write
$$x = \sum_{\tau\in G, g\in {\mathcal H}} d_{\tau,g}e_{\tau,\tau g}$$
where $d_{\tau,g} \in \bar{K}$. Then expanding
$x^r$ there is no term of the form $\bar{K} e_{\tau,\tau}$ by the assumption on $\mathcal{H}$.
\end{proof}

\begin{rem}
By the above we see that $\tr(v)=0$ for all $v\in V$ if and only if
$H\not \subseteq \mathcal{H}$. Thus zeroing out the characteristic
coefficients corresponds to avoiding having $H$ in the powers of the
associated $H$-biset $\mathcal{H}$.

For example,\begin{enumerate}
 \item Assume that $G=\langle \sigma \rangle$ is cyclic, namely $H=\{1\}$. Then the bimodule  $V$
 corresponding to the $H$-biset $\{\sigma\}$ satisfies, $\rho_k(v)=0$ for all $1\leq k\leq n-1$; indeed,
 $\{\sigma\}^k\cap H=\emptyset$ for all the relevant cases.
     \item Assume that $G=\langle \sigma \rangle\rtimes \langle \tau \rangle \cong C_n \rtimes C_2$ is dihedral,
     namely $H=\langle \tau \rangle$.
     Then the bimodule $V$ corresponding to the $H$-biset $H\sigma H$ satisfies  $\rho_k(v)=0$ for all odd $k$,
     $1\leq k\leq n-1$;
      indeed, $\{\sigma\}^k\cap H=\emptyset$ for all the relevant cases. This was used in \cite{RS}  to prove that dihedral algebras of odd degree are cyclic.
 \end{enumerate}

\end{rem}

We are interested in saying something about elements in $K a$, where again we may pass to $\bar{K}a$. Notice that in $\bar K a$ we can choose  elements of the form $x = \sum_{\tau^{-1} \sigma \in \mathcal H} x_{\tau}e_{\tau,\sigma}$ for a fixed $\sigma$. Fixing $\tau$ with $\tau^{-1}\sigma \in {\mathcal H}$, we can always choose
$x$ with $x_{\tau} = 1$ because $e_{\tau,\sigma} \in \bar Ka\bar K$
and so $e_{\tau,\sigma} = e_{\tau,\tau}x$ for $x \in a\bar K$.
We claim:
\begin{thm}\label{Kum5}
Suppose that $\rho_k(x) = 0$ for all $1 \leq k \leq r$ and all $x
\in Ka$. Then the same is true for all $x \in KaK$.
\end{thm}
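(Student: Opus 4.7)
The plan is to propagate the hypothesis through the multilinear framework of Section~\ref{sec:7}. By \Rref{infup} the condition $\rho_k(x)=0$ for all $x\in Ka$ and $1\le k\le r$ upgrades to the multilinear \TRL\ on $Ka$, and then by \Tref{mult10} to \TTL:
\[
\tr(x_1,\dots,x_k)=0 \quad \text{for all } x_i\in Ka \text{ and } 1 \le k\le r.
\]
Since $F$ is infinite, these polynomial identities extend by density to $\bar K a = L\tensor_F Ka$.

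The central step is to lift \TTL\ to the full bimodule $\bar K a \bar K$. By multilinearity it is enough to test on $y_i = k_i' a k_i''$ with $k_i',k_i''\in \bar K$. Cyclic invariance of the trace allows each monomial in $\tr(y_1,\dots,y_k)=\sum_{\eta\in S_{\{2,\dots,k\}}}\tr(y_1 y_{\eta(2)}\cdots y_{\eta(k)})$ to be rewritten, by pulling the trailing factor $k_{\eta(k)}''$ to the front and regrouping consecutive pairs $k_\alpha''k_\beta'$, as the trace of a product of $k$ elements of $\bar K a$:
\[
\tr(y_1 y_{\eta(2)}\cdots y_{\eta(k)}) = \tr\bigl(u_1^{(\eta)}\,u_2^{(\eta)}\cdots u_k^{(\eta)}\bigr),\qquad u_j^{(\eta)} := k_{\eta(j-1)}'' k_{\eta(j)}'\, a\in \bar K a,
\]
with cyclic conventions $\eta(1)=1$ and $\eta(0)=\eta(k)$. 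Summing over $\eta$ and re-indexing so that the sum aligns with a symmetric multilinear trace evaluated at $\bar K a$-tuples, one invokes \TTL\ on $\bar K a$ from the first step to conclude $\tr(y_1,\dots,y_k)=0$.

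A second application of \Tref{mult10} now converts \TTL\ on $\bar K a \bar K$ to \TRL\ on $\bar K a \bar K$. To recover the original scalar \TR\ statement $\rho_k(y)=0$ on $KaK$ one can either polarize (in characteristic $0$ or whenever $k!$ is invertible), or, more robustly, invoke the combinatorial criterion of \Pref{P41}: the same multilinear trace computation that proves \Pref{P41} shows that the vanishing obtained is equivalent to the condition that no product of $k$ elements of the biset of $KaK$ equals the identity, for $k\le r$, which is exactly the condition for $\rho_k=0$ to hold on all of $KaK$.

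The main obstacle is the re-indexing in the second paragraph: the sum $\sum_\eta\tr(u_1^{(\eta)}\cdots u_k^{(\eta)})$ has its $\bar K a$-arguments varying with $\eta$, so it is not literally a multilinear trace $\tr(w_1,\dots,w_k)$ for fixed $w_i$. The task is to track the bijection between $\eta\in S_{\{2,\dots,k\}}$ and the cyclic orderings of $\{1,\dots,k\}$, and to exploit cyclic invariance of the trace together with commutativity inside $\bar K$ to re-express the sum as a linear combination of multilinear trace vanishings already granted by \TTL\ on $\bar K a$.
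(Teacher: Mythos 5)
Your opening and closing moves are sound: passing from $\rho_k=0$ on $Ka$ to \TRL\ via \Rref{infup}, then to \TTL\ via \Tref{mult10}, extending to $\bar K a$ by Zariski density, and finally recovering the scalar statement through the combinatorial criterion of \Pref{P41} rather than by polarization. The problem is that the central step—lifting \TTL\ from $\bar K a$ to $\bar K a\bar K$ by ``re-indexing''—is not a formal identity, and you correctly flag it yourself as the unresolved obstacle. When you write $y_i = k_i'ak_i''$ and regroup each term $\tr(y_1 y_{\eta(2)}\cdots y_{\eta(k)})$ into $\tr(u_1^{(\eta)}\cdots u_k^{(\eta)})$ with $u_j^{(\eta)} = k_{\eta(j-1)}'' k_{\eta(j)}'\,a\in\bar K a$, both the \emph{elements} and the \emph{ordering} depend on $\eta$. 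What \TTL\ on $\bar K a$ gives you is the vanishing of $\sum_\eta \tr(w_1 w_{\eta(2)}\cdots w_{\eta(k)})$ for a \emph{fixed} tuple $(w_1,\dots,w_k)$; your sum ranges over a pool of $k(k-1)$ elements $m_{ij}=k_i''k_j'a$ assembled into Hamiltonian cycles on $\{1,\dots,k\}$, and there is no way to match this, monomial-by-monomial, to a linear combination of instances of \TTL\ without additional input. (Already for $k=3$: $\tr(m_{31}m_{12}m_{23})+\tr(m_{21}m_{13}m_{32})$ is not, as it stands, a telescoping combination of the relations $\tr(w_1w_2w_3)+\tr(w_1w_3w_2)=0$, because applying those relations introduces new monomials such as $\tr(m_{31}m_{23}m_{12})$ that do not appear in the original sum.)

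The paper's argument is genuinely different precisely at this point, and shows why the gap cannot be filled by algebra alone. It first reduces via \Pref{P41} to the combinatorial claim that no product $g_1\cdots g_k=1$ with $g_i\in\mathcal H$ exists for $k\le r$, and then inducts on $r$. Assuming such a closed cycle $g_1\cdots g_r=1$ exists, it picks \emph{specific} test elements $v_i\in a\bar K e_{\tau_i,\tau_i}$ supported on the matrix units along that cycle and normalized so that the identity permutation contributes $\tr(v_1\cdots v_r)=1$. The inductive hypothesis (no shorter closed cycles) then forces every nontrivial $\eta$ to contribute a zero diagonal, so $\tr(v_1,\dots,v_r)=1\ne 0$, contradicting \TTL. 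In other words, the ``re-indexing'' you want only becomes available after the combinatorial reduction and with these carefully chosen $v_i$; it is an inductive elimination of permutations, not a universal rearrangement of a symmetric sum. To repair your proof you would need to replace the second paragraph with exactly this inductive, matrix-unit-based argument.
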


\begin{proof}
We proceed by induction on $r$. If $\rho_1(ak) = \tr(aK) = 0$ for all $k
\in K$ then $\tr(k'ak) = \tr(akk') = 0$ for all $k',k \in K$ and any
element of $KaK$ is a sum of $k'ak$'s.

For $r > 1$ the result is harder but we know by induction that
$\rho_i(x) = 0$ for all $i < r$ and all $x \in KaK$. In particular,
for all $i < r$ there are no $g_1,\ldots, g_i \in {\mathcal H}$ with
$g_1\ldots{g_i} = 1$ by \Pref{P41}.

Again by \Pref{P41}, it suffices to show that there are no $g_1,\ldots,g_r$ with $g_1\cdots{g_r} = 1$. Assume otherwise.
Choose $\tau$ arbitrary, define $\tau_0 = \tau$ and set $\tau_i =
\tau{g_1}\ldots{g_i}$ for $i = 1,\ldots,r$. Choose $v_i =
\sum_{\tau'} x_{i,\tau'}e_{\tau',\tau_i} \in a\bar
Ke_{\tau_i,\tau_i}$ such that $x_{i,\tau_{i-1}}$, the coefficient of
$e_{\tau_{i-1},\tau_i}$, equals 1. Then $v_1\cdots{v_r}$ has a unique diagonal element namely $e_{\tau,\tau}$. By Proposition \ref{mult10}, there is a $1
\not= \eta \in S_{r-1}$ such that
$v_1v_{\eta(2)}\ldots{v_{\eta(r)}}$ has a nonzero
diagonal component $ye_{\tau,\tau}$. If $1 \not= j = \eta(1)$, then
$j > 1$ and in $v_j = \sum_{\tau'} x_{j,\tau'}e_{\tau',\tau_j}$ we
must have $x_{j,\tau} \not= 0$. This implies $\tau^{-1}\tau_j =
g_1\ldots{g_j} \in {\mathcal H}$ which implies
$(g_1\ldots{g_j})g_{j+1}\ldots{g_r} = 1$ which contradicts the
induction assumption. Thus $\eta(1) = 1$. If we redefine $j =
\eta(2)$ and assume that $j > 2$, then arguing similarly we have
$x_{\tau_1,\tau_j} \not= 0$ and so $\tau_2\ldots{\tau_j} \in
{\mathcal H}$. This again contradicts the induction assumption and
so, by an inner induction that proceeds now in the obvious way, we
have $\eta$ is the identity, proving the theorem.~\end{proof}

\begin{cor}\label{bisp}
 $Ka$ is Kummer iff   $KaK$ is Kummer.
\end{cor}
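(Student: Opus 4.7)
The plan is to observe that this corollary is essentially a direct packaging of Theorem~\ref{Kum5} together with the trivial containment $Ka \subseteq KaK$.

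For the forward direction, suppose $Ka$ is Kummer. By definition this means $\rho_k(x) = 0$ for every $x \in Ka$ and every $1 \leq k \leq n-1$. I would simply apply Theorem~\ref{Kum5} with $r = n-1$, which yields $\rho_k(x) = 0$ for every $x \in KaK$ and every $1 \leq k \leq n-1$; that is exactly the statement that $KaK$ is Kummer.

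For the reverse direction, note that $Ka \subseteq KaK$ (taking the right factor to be $1 \in K$). Therefore, if all elements of $KaK$ satisfy $\rho_k(v) = 0$ for $1 \leq k \leq n-1$, the same holds a fortiori for all elements of $Ka$, so $Ka$ is Kummer.

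The only nontrivial content is Theorem~\ref{Kum5}, which has already been established; the corollary itself requires no further combinatorial or idempotent-based argument. The main ``obstacle'' has therefore already been overcome in the preceding theorem, whose proof uses the multilinear Newton-type identity from Theorem~\ref{NI+} and the double-coset characterization in Proposition~\ref{P41}. The corollary itself is just a one-line deduction in each direction.
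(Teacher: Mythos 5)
Your proof is correct and is exactly the intended deduction: the forward direction is Theorem~\ref{Kum5} with $r = n-1$, and the reverse direction is the inclusion $Ka \subseteq KaK$. The paper states this corollary without proof precisely because it is this one-line consequence of Theorem~\ref{Kum5}.
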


\begin{thm}\label{Kum1}
Suppose that $Ka$ is Kummer. Then $H = \set{1}$, $G$ is cyclic of
order $n$ with generator $g$, and $KaK = Ku$ for invertible $u\in
KaK$ satisfying $uku^{-1} = g(k)$ for all $k \in K$. If $a$ is
invertible we may assume $u = a$.
\end{thm}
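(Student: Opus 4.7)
The plan is to convert the Kummer hypothesis into a purely combinatorial constraint on the $H$-biset $\mathcal{H} \sub G$ associated to $KaK$, and then show this constraint forces $\mathcal{H}$ to be a single element of $G$. By \Cref{bisp}, $Ka$ Kummer implies $KaK$ Kummer, so every $v \in KaK$ satisfies $\rho_k(v) = 0$ for $1 \leq k \leq n-1$. Applying \Pref{P41} with $r = n-1$ yields the condition $(*)$: no product of at most $n-1$ elements of $\mathcal{H}$ equals $1$; equivalently, since $\mathcal{H}$ is a biset, no such product lies in $H$.

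Next I would extract the structure of $\mathcal{H}$. Fix any $g \in \mathcal{H}$. Since $g^j \in \mathcal{H}^j$, condition $(*)$ forces $g^j \notin H$ for $1 \leq j \leq n-1$, so the left cosets $H, gH, g^2H, \dots, g^{n-1}H$ are distinct and thus exhaust $G/H$ (with $g^n \in H$). To rule out additional cosets in $\mathcal{H}$, suppose some $g' \in \mathcal{H}$ satisfies $g'H = g^iH$ with $2 \leq i \leq n-1$, and write $g' = g^i h$ with $h \in H$. Then $g^{n-i} g' = g^n h$ lies in $H$ while using only $n-i+1 \leq n-1$ elements of $\mathcal{H}$, contradicting $(*)$. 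Hence $\mathcal{H} = gH$. The biset identity $H\mathcal{H} = \mathcal{H}$ now reads $HgH = gH$, so $g \in N_G(H)$, and $\langle g \rangle H$ is a subgroup of order $n|H| = |G|$; thus $G = \langle g \rangle H$ and $H$ is normal in $G$. But $L$ is the Galois closure of $K/F$, so the core $\bigcap_\sigma \sigma H \sigma^{-1}$ equals $\Gal(L/L) = \{1\}$; a normal $H$ coincides with its core, forcing $H = \{1\}$. Therefore $G = \langle g \rangle$ is cyclic of order $n$ and $\mathcal{H} = \{g\}$.

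For the final identification, \Cref{order} gives $\dim_K KaK = |\mathcal{H}|/|H| = 1$, so $KaK = Ku$ for any nonzero $u$ in it, and \Pref{subcyclic} (applied with $K'' = K$, $K' = F$, $\sigma = g$) identifies this line as $\{x \in A : xk = g(k)x \text{ for all } k \in K\}$. By \Lref{invelem} I may pick $u$ invertible, and then $uku^{-1} = g(k)$ for all $k \in K$. If $a$ itself is invertible, then $a \in Ku$, so $a = \lambda u$ for some $\lambda \in K^\times$; because $\lambda$ commutes with $K$, conjugation by $a$ induces the same automorphism $g$, and we may take $u = a$.

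The main obstacle is the ``single coset'' step in the second paragraph: the witness product $g^{n-i}g'$ must be chosen so that its length remains in the range controlled by $(*)$, and this is where the Kummer hypothesis is used in its most delicate form. Once $\mathcal{H}$ has been pinned down to one left coset, the remaining deductions --- normality and triviality of the core yielding $H=1$, the dimension count, and the existence of an invertible generator --- follow formally from earlier results in the paper.
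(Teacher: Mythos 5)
Your proof is correct and follows essentially the same route as the paper's: reduce via Corollary~\ref{bisp} and Proposition~\ref{P41} to the combinatorial condition that no product of fewer than $n$ elements of $\mathcal{H}$ lies in $H$, use this to show that for any $g\in\mathcal H$ the cosets $H,gH,\dots,g^{n-1}H$ exhaust $G/H$ and that $\mathcal H=gH$, then deduce normality and triviality of $H$ (the paper invokes $\langle H,g\rangle=G$ for normality where you phrase it via $N_G(H)$, but these are the same observation), and finally identify $KaK=Ku$ with $u$ invertible inducing $g$. The only difference is expository: you spell out the final identification (dimension count via Corollary~\ref{order}, appeal to Proposition~\ref{subcyclic} and Lemma~\ref{invelem}) where the paper compresses it into a sentence.
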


\begin{proof}
By Corollary~\ref{bisp}, $KaK$ is Kummer. Let ${\mathcal H}$ be the
$H$-biset associated to $KaK$. By \Pref{P41} there are no $g_i \in
{\mathcal H}$ such that $g_1g_2{\ldots}g_s = 1$ for any $1 \leq s <
n$. Equivalently, $H$ is not contained in ${\mathcal H}^s$ for any
$1 \leq s < n$. Fix some $g \in {\mathcal H}$. If $g^iH = g^jH$ for
$i < j$, then $g^{j-i} \in H$ and so $H \subset {\mathcal H}^{j-i}$
implying $j - i \geq n$. Thus $H, gH, \ldots, g^{n-1}H$ are all
distinct. This implies that their union is $G$ and hence $g^nH = H$
or $g^n \in H$, and $g^s \notin H$ for $1 \leq s < n$.

We show that $\mathcal H = gH$. Indeed, assume $g^sH \sub \mathcal H$; then $g^n = g^{n-s}g^s \in \mathcal{H}^{n-s}\mathcal{H} = \mathcal{H}^{n-s+1}$, and $H \sub \mathcal{H}^{n-s+1}$, implying that $s = 1$. But now $gH$ is an $H$-biset, so $Hg = gH$ and $H \lhd G$ since $\sg{H,g} = G$.

Since $L/F$ was the Galois closure of $K/F$, this
implies $H = (1)$ and now clearly $G$ is cyclic of order $n$, and
$KaK$ is associated to the double coset $HgH = \set{g}$ for $g$ a generator of
$G$. But this implies $KaK = uK$ where $uku^{-1} = g(k)$ for all
$k \in K$. \end{proof}


\section{Algebras of prime degree}

Assume that the degree of $A$ over $F$ is a prime $p$. We observe
that the dimensions of the irreducible components, other than $K$,
are all equal.

\begin{prop}\label{options}
Suppose that $A/F$ has prime degree $p$. There are two
possibilities:
\begin{enumerate}
\item For every $a \not \in K$, $\dim_K(KaK) = p-1$. In this case $G$ is doubly transitive.
\item All irreducible sub-$K$-$K$-bimodules of $A$ other than $K$ have the same dimension $r$; and $L/F$ has Galois group $C_p \rtimes C_r$ where $C_r$ acts faithfully on $C_p$, so $r$ (strictly) divides $p-1$.
\end{enumerate}
\end{prop}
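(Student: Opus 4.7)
The plan is to translate the statement into a question about orbits of $H$ acting on $G/H$, using the dictionary built earlier. By \Lref{doublecoset} and \Cref{order}, writing $K\otimes_F K = K \oplus K_1 \oplus \cdots \oplus K_s$ with each non-trivial $K_i$ indexed by a double coset $Hg_iH$, we have $\dim_K K_i = \card{Hg_iH}/\card{H}$, which equals the size of the orbit of $g_iH$ under the left action of $H$ on $G/H$. Since $\card{G/H} = [K\!:\!F] = p$ and $H$ fixes the trivial coset, the nontrivial orbit sizes $d_i = \dim_K K_i$ sum to $p-1$. Under the bimodule isomorphism $A \isom K \otimes_F K$, any $a \not\in K$ has nonzero projection onto some $K_i$, and $KaK$ is the sum of $K_i$ (and possibly $K$) corresponding to the nonzero projections; in particular the multiset $\set{d_1,\dots,d_s}$ determines the possible dimensions of $KaK$.

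The action of $G$ on $G/H$ exhibits $G$ as a transitive (and faithful, since $L$ is the Galois closure of $K/F$) permutation group of prime degree $p$. The crucial input is Burnside's classical theorem on such groups: either $G$ is doubly transitive on $G/H$, or $G$ contains a unique normal Sylow $p$-subgroup and thereby embeds in $\operatorname{AGL}(1,p) = \F_p \rtimes \F_p^*$ as a subgroup containing the translations $\F_p$. In the first alternative, $H$ acts transitively on the remaining $p-1$ cosets, so $s = 1$ and $d_1 = p-1$, which is case (1). In the second alternative, $G = C_p \rtimes C_r$ with $r = \card{H}$ and $C_r \leq \F_p^*$; faithfulness of the $C_r$-action on $C_p$ is inherited from the faithful action of $G$ on $G/H$. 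Identifying $G/H$ with $\F_p$ so that $H$ fixes $0$, the $H$-action on $\F_p \setminus \set{0}$ reduces to multiplication by the subgroup $C_r \leq \F_p^*$, whose orbits are the $(p-1)/r$ cosets of $C_r$ in $\F_p^*$, each of size $r$; hence all nontrivial $d_i$ equal $r$. Finally $r$ strictly divides $p-1$, since $r = p-1$ would give $G = \operatorname{AGL}(1,p)$, which is sharply $2$-transitive, contradicting the non-doubly-transitive hypothesis.

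The only substantive obstacle is Burnside's theorem on transitive groups of prime degree, a classical but non-elementary result (see, e.g., Wielandt's \emph{Finite Permutation Groups}); once it is invoked, the remainder is routine orbit-counting combined with the bimodule dictionary of \Tref{main}. I would present the proof in the order above: establish the orbit-size sum $d_1+\cdots+d_s = p-1$, invoke Burnside to split into two cases, and then complete each case with an explicit description of the $H$-orbits on $G/H$.
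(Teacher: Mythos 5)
Your proposal is correct and takes essentially the same approach as the paper: translate to double cosets / $H$-orbits on $G/H$ via the dictionary, invoke Burnside's theorem on transitive permutation groups of prime degree to force $G \leq \operatorname{AGL}(1,p)$ in the non-doubly-transitive case, and read off the orbit sizes. The paper's version is terser but the underlying argument is identical.
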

\begin{proof}
If (1) fails, the group $G$ is transitive, but not doubly transitive
by \Rref{dt}. By a theorem of Burnside~\cite[Theorem~I.7.3]{Pass}, $G$ then is
solvable and hence of the form $C_p \rtimes C_r$. $H$ is now a
conjugate of $C_r$ and its double cosets correspond to the orbits of
$C_r$ on~$C_p$.
\end{proof}

Notice that the case $r = 1$ is when $K$ is cyclic. It is interesting to consider the ``next best'' case where (some) $KaK$ has dimension exactly~2 over $K$.
If $K/F$ is not cyclic, then there is a double coset $HgH$ which has
order $2\card{H}$. By the proposition this forces $L/F$ to have dihedral Galois group -- which means $A$ is cyclic by \cite{RS} (although not with respect to $K$).

In the situation (1), $A = K \oplus KaK$ for some $a \not \in K$, and $(KaK)^2 = A$. Let us look more closely at the situation (2) in the above proposition.
That is, assume that $n=p$ is prime, and $G$ is transitive but not
doubly transitive. Again by Burnside's theorem, $G$ is contained in the
affine group of the field $\F_p$, and contains the element $\s(i)
= i+1$. In the earlier notation, we may assume $H$ is the
stabilizer of $0$, which is cyclic of some order $r$ strictly dividing $p-1$. We present
$$G = C_p \rtimes C_r = \sg{\s, \tau \,|\, \s^p = \tau^r = 1, \tau\s\tau^{-1} = \sigma^t};$$
thus $H = \sg{\tau}$, which we identify with the subgroup $\sg{t}
\sub \mul{\F_p}$ where we fix an element $t \in \mul{\F_p}$ of
order $r$.

For $c \in \F_p$, let us denote $$\s^{cH} = \set{\tau' \s^c
\tau'^{-1} \,|\, \tau' \in H} = \set{\s^{ct^i} \,|\, i =
0,\dots,r-1};$$ thus $\s^{cH}H = H \s^{cH}$. Any double coset of
$H$ in $G$ has the form $H \s^c \tau^d H = H \s^c H = \set{\tau^i
\s^c \tau^{-i} \,|\, \tau^i \in H} H = \s^{cH}H$. The double
cosets correspond to the orbits of $\F_p$ under the action of $H =
\sg{t}$ by multiplication.
The inverse of this double coset is $(\s^{cH}H)^{-1} = H\s^{-cH} =
\s^{-cH}H$, and the product (in $G$) of two double cosets
$\s^{cH}H, \s^{c'H}H$ is
 $ \s^{cH+c'H}H$, which is a union of all the
$\s^{c''H}H$ for which $c''H \sub cH+c'H$. Thus, the semiring of
double cosets with union and multiplication is isomorphic to the semiring of subsets of the quotient group $\mul{\F_p}/\sg{t}$ with union and addition of subsets.

We may identify $\bar A = L \tensor A$ with $\M[p](F)$, with $e_{ii}$
the idempotent corresponding to $\s^i H$. The irreducible
bimodule corresponding to a double coset $\s^{cH}H$ is~$K\sum_{i-j
\in cH} e_{ij} K$.

\iffurther

\section{Further ideas}

\subsection{Inverse}

(David, Feb 2015)

I will look at the new version but I have a question. We have a simple relationship between $KaK$'s and unions of double cosets and moreover
products of one correspond to products of another. So, I noticed that
double cosets are closed under inverse - $(HgH)^{-1} = Hg^{-1}H$. What is
the corresponding operation on $KaK$'s? Now on $K \otimes_F K$ we have the
same correspondence and I believe the $HgH$ to $Hg^{-1}H$ operation corresponds to $k \otimes k' \to k' \otimes k$. In fact, I think I can prove this due to the explicit computation of the idempotent.
But that does not answer the question of what happens in $A$.
Obviously, there is a $\tau:A \cong A$ such that $\tau(kak') =
k'ak$ for all $a \in A$ and $k,k' \in K$ but there are many $\tau$'s
and I wish there were a canonical one.

I started down this road because I want to reprove (and generalize) the dihedral implies cyclic results using this new point of view.

David

\subsection{The idempotent corresponding to $K_{[g]}$ in $K \tensor K$}


...

For $g \in G$, let $f_i$ denote the minimal polynomial over $K$ of
$g(\theta)$. Then
 $$f_i(x) = \prod_{g'H \sub HgH} (x-g'(\theta)).$$
Analogously to \cite[Section~2.3]{Jac}, we note that for fixed $g \in G$,

 $f_i(x) = \prod_{g'H \sub HgH} (x-1\tensor g'(\theta))$.

 $f_i(1\tensor g(\theta)) = \prod_{g'H \sub HgH} (1\tensor g(\theta)-1\tensor g'(\theta))$.

 $f_i(\theta \tensor 1) = \prod_{g'H \sub HgH} (\theta \tensor 1-1\tensor g'(\theta))$.

in particular $\prod_{j\neq i} f_j(1\tensor g(\theta)) = \prod_{g'H \not \sub HgH} (1\tensor g(\theta)-1\tensor g'(\theta))$.

It follows that
$$e_{[g]} = \prod_{j \neq i} f_j(\theta) \tensor  \prod_{j \neq i} \prod_{\bar{g}H \sub H gH} f_j(g(\theta))^{-1}.$$

$$e_{[g]} = \prod_{j \neq i} \prod_{\bar{g}H \sub H gH} f_j(1\tensor g(\theta))^{-1} \cdot \prod_{j \neq i} f_j(\theta \tensor 1).$$

$$e_{[g]} = \prod_{\bar{g}H \sub H gH} \ \prod_{g'H \,:\, g'H \not \subseteq HgH} (1\tensor \bar{g}(\theta)- 1 \tensor g'(\theta))^{-1} \cdot \prod_{g'H \,:\, g'H \not \subseteq HgH} (\theta \tensor 1 - 1 \tensor g'(\theta)).$$

\subsection{Brauer factor sets}\label{BFS}

(A review of Brauer factor sets)

Let $A$ be a $F$-csa of degree $n$ and $K=F[u]\subset A$ a separable
subfield of $A$ of degree $n$, note that $K\sim
F[\lambda]/<p(\lambda)>$. In \cite{FDDAJ} there is a description of
$A$ as a $F$-subalgebra of $\M[n](E)$ where $E$ is the Galois
closure of $K/F$, in the following way:

Let $G=\Gal(E/F)$ and remember that $G$ acts on the set
$\{1,\dots,n\}$ via its action on the roots of $p(\lambda)$. There
is an element $v\in A$ such that $A=KvK$. From $v$ one obtains a
reduced Brauer factor set $\{ c_{ijk} \}\subset E^{\times}$ such
that:

\begin{enumerate}
\item $\sigma(c_{ijk})=c_{\sigma(i)\sigma(j)\sigma(k)}$ for every $\sigma\in G$.
\item $c_{ijk}c_{ikl}=c_{ijl}c_{jkl}$.
\item $c_{iii}=1$, which implies that $c_{iij}=c_{jii}=1$, (Note that the assumption $K/F$ is a field implies that it is enough to have $c_{111}=1$ to get $c_{iii}=1$).
\end{enumerate}
Then one can identify $A$ as the $F$-subalgebra of $\M[n](E)$ whose
underlying set is $\{ l=(l_{ij}c_{ij1})|
\sigma(l_{ij})=l_{\sigma(i)\sigma(j)} \}$. Now consider $$\B(K,c)=\{
l=(l_{ij})|\sigma(l_{ij})=l_{\sigma(i)\sigma(j)}\} \subset
\M[n](E),$$ with $c$-multiplication defined by
$(l_{ij})(l'_{ij})=(l''_{ij})$ where $l''_{ij}=\sum
l_{ik}c_{ikj}l'_{kj}$. Then $\B(K,c)\cong A$ and we identify them in
$\M[n](E)$.

Writing a typical element $w$ of $A$ as $(w_{ij}),$ we call this the
\textbf{Brauer matrix representation } of $A$.

\begin{rem}[{\cite{FDDAJ}}]\label{Brep1} Any element $a=(a_{ij})$ satisfies
the property that $[KaK:F]$ is the number of nonzero $a_{ij}$ in the
Brauer matrix representation. In particular, $KaK =A$ if and only if
$a_{ij}\neq 0, $ $\forall i,j$. For example the element
$a_{\bf{1}}=(1)$ (where each entry is $ 1$) is in $A$, and
$A=Ka_{\bf{1}}K$.
\end{rem}

\subsection{Improving $a$}

Content of this subsection: write $A = \sum Ka_iK$. We would like to choose the representatives so that $(Ka_iK)(Ka_jK) = Ka_ia_jK$; this can be done. The same argument applies if one wants to write longer products, as long as they keep growing. But to keep the condition

We would like $(KaK)(Ka'K)$ to be $Kaa'K$. Although this need not be
the case in general, it is true for Zariski dense sets of elements
in $KaK$ and $Ka'K.$ To see, this, we consider the condition on
matrices $a = \sum a_{ij}e_{ij}$ and $a' = \sum a_{ij}'e_{ij}$:

\begin{equation}\label{CondP} \text{If some } a_{rs} \text{ and } a'_{st}
 \text{ are nonzero, then } \sum_{s} a_{rs}a'_{st} \ne
 0.\end{equation}

\begin{lem}
Assume that $\mychar F = 0$. If $a, a'$ satisfy condition
\eqref{CondP}, then $(KaK)(Ka'K) = Kaa'K$.
\end{lem}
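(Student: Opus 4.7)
The plan is to work inside $\bar A = L\otimes_F A \isom \M[n](L)$, where by Subsection \ref{ss:23} we may assume $\bar K = L\otimes_F K$ is identified with the diagonal matrices (with indices in $G/H$). Extension of scalars from $F$ to $L$ is faithfully flat and commutes with bimodule products, so it suffices to prove the equality after tensoring with $L$, i.e.\ that $(\bar K a \bar K)(\bar K a' \bar K) = \bar K aa'\bar K$ in $\M[n](L)$.

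First I would record the basic observation: for any $x = \sum x_{ij}e_{ij} \in \M[n](L)$, the bimodule $\bar K x \bar K$ is exactly the $L$-span of the matrix units $e_{ij}$ for which $x_{ij}\ne 0$. This is because $\bar K$ is the full diagonal, so $e_{ii}\,x\,e_{jj}=x_{ij}e_{ij}$, and an $e_{ij}$ lies in $\bar Kx\bar K$ iff $x_{ij}$ is nonzero. Apply this to $a$, to $a'$, and to $aa'$.

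Next I would compute $(\bar Ka\bar K)(\bar Ka'\bar K)$ directly from this description: it is the $L$-span of all products $e_{rs}e_{s't}$ with $a_{rs}\ne 0$ and $a'_{s't}\ne 0$, which collapses (since $e_{rs}e_{s't}=\delta_{ss'}e_{rt}$) to the $L$-span of those $e_{rt}$ for which there exists an index $s$ with both $a_{rs}\ne 0$ and $a'_{st}\ne 0$. On the other hand $\bar Kaa'\bar K$ is the $L$-span of those $e_{rt}$ with $(aa')_{rt}=\sum_s a_{rs}a'_{st}\ne 0$. The inclusion $\bar Kaa'\bar K \sub (\bar Ka\bar K)(\bar Ka'\bar K)$ is automatic, and the reverse inclusion is exactly the assertion that the support of $aa'$ contains every pair $(r,t)$ for which some term in the convolution sum is nonzero; this is precisely the hypothesis \eqref{CondP}.

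Finally, I would descend: since $L/F$ is faithfully flat, $\dim_F Kaa'K = \dim_L \bar Kaa'\bar K$ and similarly for $(KaK)(Ka'K)$, so the obvious inclusion $Kaa'K\sub (KaK)(Ka'K)$ combined with equality of dimensions yields equality in $A$. The only place the characteristic-zero hypothesis enters is to guarantee that the set of pairs $(a,a')$ satisfying \eqref{CondP} is Zariski dense in $KaK\times Ka'K$ (so that the lemma is non-vacuous over $F$); the algebraic content of the proof itself is characteristic-free. The main (mild) obstacle is bookkeeping the identification of indices in $\bar K$ with cosets in $G/H$ consistently with the conventions of Proposition \ref{matrixform}, so that the matrix-entry condition \eqref{CondP} genuinely matches the convolution of supports used in the computation above.
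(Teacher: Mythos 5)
Your proof is correct and takes essentially the same route as the paper's: identify $\bar K$ with the diagonal of $\M[n](L)$, observe that $\bar K x\bar K$ is the $L$-span of the matrix units in the support of $x$, and reduce the equality to the hypothesis that the convolution $\sum_s a_{rs}a'_{st}$ does not vanish whenever some term is nonzero. The paper's version is terser (it suppresses the explicit extension-of-scalars and descent bookkeeping that you spell out), but the computation is identical; and you are right that the characteristic-zero hypothesis plays no role in the algebraic argument itself.
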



\begin{proof}
Let $a = \sum a_{ij}e_{ij}$ and $a' = \sum a_{ij}'e_{ij}$; then
$KaKa'K$ is spanned by all the matrices of the form
$a_{rs}a'_{st}e_{rt}$ (for any $r,s,t$), and $Kaa'K\subset KaKa'K$
is spanned by the matrices of the form $(\sum_{s=1}^t a_{rs}a'_{st})
e_{rt}$ (for any $r,t$). Thus, to prove equality, we need that if
some $a_{rs}$ and $a'_{st} $ are nonzero, then $\sum_{s}
a_{rs}a'_{st} \ne 0$, which is true by hypothesis.
\end{proof}

\begin{prop}
 Let $a,a'$ be generators of respective bimodules $V = KaK$ and $V'= Ka'K$.
 Then $(K\hat aK)(K\hat a'K)=K\hat a\hat a'K$ for Zariski dense sets of $\hat a$ in $V$ and $\hat a' \in
 V'$.
\end{prop}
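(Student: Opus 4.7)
The plan is to apply the preceding lemma by exhibiting a Zariski-dense subset of $(\hat a, \hat a') \in V \times V'$ on which condition~\eqref{CondP} holds. The key step is to pass to $\bar A = L \tensor[F] A = \M[n](L)$, where both bimodules admit a concrete matrix-unit description, verify~\eqref{CondP} on a nonempty Zariski-open subset of $\bar V \times \bar V'$ by exhibiting witnesses among the matrix units themselves, and then descend back to $V \times V'$.

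In detail, let $S, S' \sub G$ be the $H$-bisets associated with $V$ and $V'$ by \Tref{main}. By Proposition~\ref{matrixform}, $\bar V = L \tensor[F] V$ and $\bar V' = L \tensor[F] V'$ are spanned as $L$-modules by those matrix units $e_{\alpha H, \beta H}$ with $\alpha^{-1}\beta \in S$, respectively $\alpha^{-1}\beta \in S'$, and the product bimodule has basis of matrix units indexed by $(\alpha H, \gamma H)$ with $\alpha^{-1}\gamma \in SS'$. Writing $a \in \bar V$ and $a' \in \bar V'$ in these coordinates, condition~\eqref{CondP} is the conjunction, over pairs $(\alpha H, \gamma H)$ with $\alpha^{-1}\gamma \in SS'$, of the polynomial non-vanishing $\sum_{\beta H} a_{\alpha H, \beta H}\, a'_{\beta H, \gamma H} \neq 0$. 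Each such inequality cuts out a nonempty open set: for a given $(\alpha H, \gamma H)$, pick $\beta$ with $\alpha^{-1}\beta \in S$ and $\beta^{-1}\gamma \in S'$ (such a $\beta$ exists by definition of $SS'$); the specialization $a = e_{\alpha H, \beta H}$, $a' = e_{\beta H, \gamma H}$ then yields $aa' = e_{\alpha H, \gamma H}$, so the corresponding entry equals $1$.

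Hence \eqref{CondP} defines a nonempty Zariski-open subset $U \sub \bar V \times \bar V'$. Assuming $F$ is infinite, the identity $\bar V = V \tensor[F] L$ implies $V$ is Zariski-dense in $\bar V$, so $U \cap (V \times V')$ is a nonempty Zariski-open and hence Zariski-dense subset of $V \times V'$; for any $(\hat a, \hat a')$ in this subset the preceding lemma yields $(K\hat a K)(K\hat a' K) = K\hat a \hat a' K$. The main obstacle is the descent step: confirming that the matrix-unit witnesses, which a priori live in $\bar V$ over the larger field $L$, translate to a genuine non-vanishing of a polynomial on the $F$-points $V$. This is standard once one notes that an $F$-basis of $V$ is simultaneously an $L$-basis of $\bar V$, so a polynomial in the $F$-coordinates vanishing identically on $V \times V'$ would also vanish identically on $\bar V \times \bar V'$ (using that $F$ is infinite). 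The case $F$ finite reduces to the cyclic matrix-algebra setup of \Pref{A=KaK} and can be handled by direct inspection.
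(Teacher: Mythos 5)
Your proof follows the same basic strategy as the paper's (apply the preceding lemma, observe the relevant condition defines a Zariski-open subset of $V\times V'$), but it supplies two things the paper's one-line proof leaves implicit, and in doing so quietly repairs a small inaccuracy. As literally stated, condition \eqref{CondP} is a conjunction of implications ``if some $a_{rs},a'_{st}\neq 0$ then $\sum_s a_{rs}a'_{st}\neq 0$''; each such implication cuts out the union of a closed set (where the hypothesis fails) and an open set (where the sum is nonzero), and that union is not Zariski open in general. What you actually prove open and nonempty is the \emph{stronger} condition $\sum_\beta a_{\alpha H,\beta H}\,a'_{\beta H,\gamma H}\neq 0$ for all $(\alpha H,\gamma H)$ with $\alpha^{-1}\gamma\in SS'$, which does imply \eqref{CondP} on $V\times V'$ because the support constraints coming from $S$ and $S'$ make the implication vacuous off $\operatorname{supp}(SS')$. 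You should make this replacement explicit rather than writing ``condition \eqref{CondP} \emph{is} the conjunction\dots'', since the two are not equivalent on degenerate elements. You also correctly identify the two genuine obligations that the paper elides: exhibiting a witness in each basic open $U_{\alpha,\gamma}$ (so that the finite intersection is a nonempty open in the irreducible affine space $\bar V\times\bar V'$) and descending from $L$-points to $F$-points via the density of $V\times V'$ in $\bar V\times\bar V'$ when $F$ is infinite. Two minor points to tidy up: the lemma you invoke carries a $\operatorname{char}F=0$ hypothesis which you do not acknowledge (though its proof appears not to use it), and the sentence about the finite-field case is a placeholder rather than an argument.
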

\begin{proof} Condition \eqref{CondP} is
 a Zariski open condition on $V $ and $V'$.
\end{proof}

\subsection{Symmetric sums}

The argument given below can possibly prove more than the current theorems, because it applies for each $d$ separately. However the arguments are incomplete.

------

Given $a \in A$, we define
$$T_m(a) = \set{\sum_{\omega \in S_{m+1}} x_{\omega(0)}ax_{\omega(1)} \cdots a x_{\omega(m)}\,|\, x_0,\dots,x_m \in K},$$
where $a$ appears $m$ times in each summand. For example $$T_1(a) =
\set{xay+yax \,|\, x,y \in K}.$$ Clearly, $T_m(a) \sub (KaK)^m$.
\begin{prop} Let $F$ have characteristic 0. Given any $a,$
There is a dense set of $\hat a$ in $KaK$ for which $K\hat aT_m(\hat
a)\hat aK = (K\hat aK)^{m+2}$ (for any $m$).
\end{prop}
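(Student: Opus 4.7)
The plan is to translate the equality into a combinatorial non-vanishing statement after extending scalars to the splitting field $L$. Since $F$ is infinite and the condition $K\hat{a}T_m(\hat{a})\hat{a}K = (K\hat{a}K)^{m+2}$ is Zariski open on $KaK$, density reduces to producing a single $\hat{a}$ realizing the equality; for this I would take $\hat{a}$ generic, so that its matrix-unit coefficients $\hat{a}_{g,g'}$ (for $g^{-1}g' \in S$, where $S$ is the $H$-biset corresponding to $KaK$) are algebraically independent. The inclusion $\sub$ is automatic from $T_m(\hat{a}) \sub (K\hat{a}K)^m$. Using the matrix-unit description in $\bar A \isom \M[n](L)$ from Section 4 (with $\bar K$ diagonal), a direct expansion shows that the $(g_0 H, g_{m+2}H)$-entry of the typical generator $y_0\hat{a}\bigl(\sum_{\omega}x_{\omega(0)}\hat{a}\cdots\hat{a}x_{\omega(m)}\bigr)\hat{a}y_{m+2}$ equals
\[
 y_0(g_0)\,y_{m+2}(g_{m+2})\sum_{(g_1,\ldots,g_{m+1})}\Bigl(\prod_{k=0}^{m+1}\hat{a}_{g_k,g_{k+1}}\Bigr)\,P_M(x_0,\ldots,x_m),
\]
where the sum is over tuples with every $g_k^{-1}g_{k+1}\in S$ and $P_M = \operatorname{perm}(x_i(g_{j+1}))_{0\le i,j\le m}$ is a permanent that depends on $(g_1,\ldots,g_{m+1})$ only through the multiset $M = \{g_1,\ldots,g_{m+1}\}$.

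Regrouping by multiset gives $\sum_M C_M P_M$, where $C_M$ is the sum, over all valid orderings of $M$, of the corresponding $\hat{a}$-monomial, hence a polynomial in the $\hat{a}_{g,g'}$ with positive-integer coefficients. The heart of the argument is a double linear-independence: on one hand, for distinct multisets $M \ne M'$ the permanents $P_M, P_{M'}$ are multi-homogeneous of different multi-degrees in the variables $x_i(g)$ (the degree of $P_M$ in $\{x_0(g),\ldots,x_m(g)\}$ equals $\operatorname{mult}_M(g)$), and in characteristic zero each individual $P_M$ is itself a nonzero polynomial; on the other hand, in characteristic zero each $C_M$ is nonzero in $L[\hat{a}_{g,g'}]$ as soon as $M$ admits at least one valid ordering, because its positive-integer coefficients cannot sum to zero. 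Since any pair $(g_0,g_{m+2})$ with $g_0^{-1}g_{m+2}\in S^{m+2}$ admits such a valid sequence by the very definition of $S^{m+2}$, the full expression $\sum_M C_M P_M$ is a nonzero polynomial in the $x_i(g)$, and suitable choices of $x_i, y_0, y_{m+2}\in\bar K$ make the $(g_0H,g_{m+2}H)$-entry nonzero. Multiplying by the $\bar K$-diagonal idempotents $e_{g_0H,g_0H}$ and $e_{g_{m+2}H,g_{m+2}H}$ from left and right then isolates the matrix unit $e_{g_0H,g_{m+2}H}$ inside $L\tensor[F] K\hat{a}T_m(\hat{a})\hat{a}K$; by \Pref{matrixform} these matrix units span $L\tensor[F](K\hat{a}K)^{m+2}$, so the equality holds after extending to $L$ and hence over $F$.

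The hardest technical point is non-vanishing, and it is demanded on two fronts, both of which require characteristic zero: the individual permanents $P_M$ acquire factors $\operatorname{mult}_M(g)!$ that could vanish in small characteristic and collapse entire multisets, and the positive-integer coefficients of $C_M$ could similarly degenerate modulo $p$. A minor additional subtlety is that the $x_i$ and $y_0, y_{m+2}$ strictly live in $K$ rather than in $\bar K$; but since equality of $F$-subspaces of $A$ is equivalent to equality after tensoring by $L$, where these scalars range freely over $\bar K$, this imposes no further constraint on the argument.
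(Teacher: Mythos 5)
You take essentially the same approach as the paper's own proof: pass to the split picture $\bar A\cong\M[n](L)$ with $\bar K$ diagonal, take $\hat a$ generic so that its matrix is supported on the biset $S$, and show by a path count that every matrix unit $e_{g_0H,g_{m+2}H}$ with $g_0^{-1}g_{m+2}\in S^{m+2}$ appears in $L\tensor K\hat aT_m(\hat a)\hat aK$. The paper's sketch (which the authors themselves flag as incomplete) merely asserts that the coefficient is ``the sum of all permutated paths \ldots which is non-zero''; your multi-degree separation of the permanents $P_M$ together with the positive-integer-coefficient observation for the path sums $C_M$ supply exactly the justification the paper leaves implicit, and correctly locate where characteristic zero is used.
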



\begin{proof}
We view $\hat a$ as a sum of matrix units. We thus view $\hat a$ as
a collection of arrows in the complete directed graph on $n$
vertices.

By the lemma we have that $K\hat aT_m\hat aK \sub (K\hat aK)^{m+2} =
K\hat a^{m+2}K$, so it suffices to show that $K\hat a^{m+2}K \sub
K\hat aT_m\hat aK$. Assume that $e_{\alpha\beta} \in K\hat
a^{m+2}K$. There is a path $\alpha = \alpha_0 \to \alpha_1 \to
\cdots \to \alpha_{m+2} = \beta$, where each $e_{\alpha_i
\alpha_{i+1}}$ has a nonzero coefficient in $\hat a$. The
coefficient of $e_{\alpha_1\alpha_{m+1}}$ in a generic element of
$T_m(\hat a)$ is the sum of all permutated paths involving the
vertices $\alpha_1,\dots,\alpha_{m+1}$, which is non-zero.
\end{proof}

\begin{prop}\label{trouble} Given any $a,$
There is a dense set of $\hat a$ in $KaK$ for which $K\hat aT_m(\hat
a)\hat aK = (K\hat aK)^{m+2}$ (for any $m$).
\end{prop}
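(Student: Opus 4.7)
The plan is to pass to the split form $\bar A = L \tensor[F] A \isom \M[n](L)$, in which $\bar K$ is the diagonal subalgebra and a generic $\hat a \in KaK$ becomes a matrix whose non-zero entries $\hat a_{ij}$ are indexed exactly by pairs $(i,j)$ with $i^{-1}j$ in the $H$-biset $S \sub G$ associated to $KaK$ (\Pref{matrixform}). After tensoring with $L$, the sub-bimodule $K\hat a T_m(\hat a)\hat a K$ becomes $\bar K \hat a\, \bar T_m(\hat a)\, \hat a \bar K$, where $\bar T_m(\hat a)$ allows the $x_i$ to range over $\bar K$; and $(K\hat a K)^{m+2}$ becomes the $\bar K$-span of matrix units $e_{\alpha\beta}$ with $\alpha^{-1}\beta \in S^{m+2}$. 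Since equality of sub-bimodules is preserved by $L\tensor[F]-$, it suffices to verify the equality after this extension, and the inclusion $\sub$ is immediate from $T_m(\hat a) \sub (K\hat a K)^m$.

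For the reverse inclusion, I fix such an $e_{\alpha\beta}$ together with a path $\alpha = \alpha_0 \to \alpha_1 \to \cdots \to \alpha_{m+2} = \beta$ in the support of $\hat a$. The element $e_{\alpha\alpha}\hat a \cdot t \cdot \hat a e_{\beta\beta}$ lies in $\bar K \hat a\, \bar T_m(\hat a)\, \hat a \bar K$ and evaluates to $\bigl(\sum_{i,j}\hat a_{\alpha i}\, t_{ij}\, \hat a_{j\beta}\bigr) e_{\alpha\beta}$, so it is enough to produce $t \in \bar T_m(\hat a)$ making this scalar non-zero for a Zariski-dense set of $\hat a$. A direct expansion of $t = \sum_{\omega \in S_{m+1}} x_{\omega(0)}\hat a \cdots \hat a\, x_{\omega(m)}$, using that each $x_k$ is diagonal, yields
\[
t_{\alpha_1 \alpha_{m+1}} \,=\, \sum_{\gamma} \Bigl(\prod_{k=1}^{m} \hat a_{\gamma_{k-1}\gamma_k}\Bigr)\,\operatorname{perm}\bigl((x_k)_{\gamma_l}\bigr)_{0 \le k,l \le m},
\]
where the outer sum runs over length-$m$ paths $\alpha_1 = \gamma_0 \to \gamma_1 \to \cdots \to \gamma_m = \alpha_{m+1}$ in the support of $\hat a$, and $(x_k)_\gamma$ denotes the $\gamma$-th diagonal entry of $x_k$.

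I would then regard the non-zero entries of $\hat a$ and of each $x_k$ as independent indeterminates over $\Z$; upon expansion, each distinct combination of $(i,j)$, path $\gamma$, and permutation $\omega \in S_{m+1}$ contributes a distinct monomial in these variables, so no cancellation occurs across the specialization $i = \alpha_1$, $j = \alpha_{m+1}$, $\gamma_k = \alpha_{k+1}$, $\omega = \operatorname{id}$, which yields a surviving monomial with positive integer coefficient. Hence the scalar is a non-trivial polynomial in the parameters and is non-zero on a Zariski-dense open subset. The main obstacle is precisely this non-cancellation argument: when the chosen path has repeated vertices, two different (path, permutation) pairs could a priori yield the same joint monomial, so the coefficient acquires a multiplicity factor that could conceivably vanish. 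In characteristic zero these factors are automatically non-zero; in general, one resolves the issue by a Vandermonde-style choice of the diagonal entries of the $x_k$ that isolates the permanent attached to the chosen path (killing all other permanents column-by-column via zero entries outside the path's vertex set), reducing the scalar to a single non-trivial product of independent indeterminates, which cannot vanish.
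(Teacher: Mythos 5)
Your argument is essentially the one the paper uses for the preceding (unlabeled, characteristic-zero) version of this claim: pass to the split form, fix a length-$(m+2)$ path in the support graph of $\hat a$, and express the $(\alpha_1,\alpha_{m+1})$ entry of a generic $t \in T_m(\hat a)$ as a sum over length-$m$ paths of a product of $\hat a$-entries times a permanent. For $\mychar F = 0$ your reasoning is sound, for the reason you give: every contribution to the scalar is a monomial with a positive integer coefficient, so nothing can cancel over $\Q$, and the paper's terse ``which is non-zero'' is indeed justified. The difficulty you flag at the end is genuine, however, and the Vandermonde-style repair cannot close it, because \Pref{trouble} (which drops the characteristic-zero hypothesis) is simply \emph{false} in positive characteristic; the paper itself gives no proof of this version, merely restating the earlier proposition without the hypothesis and without a proof.

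Concretely, take $\mychar F = 2$, let $A$ be a quaternion algebra with a separable maximal subfield $K$, and choose $a \in A$ with $aka^{-1} = \sigma(k)$ for $k \in K$, where $\sigma$ generates $\Gal(K/F)$; so $a^2 \in \mul{F}$, $KaK = Ka$, and the associated biset is the nontrivial coset $\{g\}$ in $G = \Z/2\Z$ with $H = 1$. Any nonzero $\hat a \in Ka$ still satisfies $\hat a^2 \in \mul{F}$ and $\hat a k = \sigma(k)\hat a$, so for $m = 2$ and any $x_0,x_1,x_2 \in K$,
\[
\sum_{\omega \in S_3} x_{\omega(0)}\,\hat a\, x_{\omega(1)}\,\hat a\, x_{\omega(2)}
\;=\; \hat a^2 \sum_{\omega \in S_3} x_{\omega(0)}\,\sigma(x_{\omega(1)})\, x_{\omega(2)}
\;=\; 2\,\hat a^2\bigl(x_0\sigma(x_1)x_2 + x_0\sigma(x_2)x_1 + x_1\sigma(x_0)x_2\bigr) \;=\; 0,
\]
because $K$ is commutative and the six summands pair off under the fixed-point-free involution $\omega \mapsto \omega\circ(0\ 2)$ of $S_3$. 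Thus $T_2(\hat a) = 0$ for \emph{every} nonzero $\hat a \in KaK$, while $(K\hat a K)^4 = \Phi(\{g\}^4) = \Phi(H) = K \ne 0$; the asserted equality fails on all of $KaK\setminus\{0\}$, not just off a dense set. In your path-and-permanent picture this is precisely the obstruction you identified: the only length-$2$ path from $\alpha_1$ to $\alpha_3$ revisits a vertex, so the relevant $3\times 3$ permanent has two equal columns and every monomial in it carries coefficient $2$, and no choice of diagonal matrices $x_k$ (over $K$ or $\bar K$) can kill that factor. Any correct version of the statement must retain a characteristic restriction (or an invertibility hypothesis on the relevant small factorials).
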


\begin{cor}
Assume that $\mychar F = 0$. For any $d$, there is a dense set of
elements $\hat a$ in $K{a}K$ for which, if $\tr((x\hat a)^d) = 0$
for every $x \in K$, then $\tr((K\hat aK)^d) = 0$.
\end{cor}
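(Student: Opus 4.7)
The plan is to combine \Pref{trouble} with the classical polarization identity. \Pref{trouble} asserts that for $\hat a$ in a Zariski-dense subset of $KaK$ we have $(K\hat aK)^d = K\hat a\, T_{d-2}(\hat a)\,\hat a K$, and the symmetric sums inside $T_{d-2}(\hat a)$ convert the trace of a single generator into the full $S_d$-symmetrization of a cyclically invariant multilinear form. Once that reduction is performed, the hypothesis $\tr((x\hat a)^d)=0$ is exactly the vanishing of the diagonal of that symmetrization, and in characteristic zero a symmetric multilinear form with vanishing diagonal must vanish identically.

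Concretely, I fix $\hat a$ in the dense set from \Pref{trouble}; the cases $d\leq 2$ reduce to a one-line cyclic-trace argument and need no density. A generator of $(K\hat aK)^d = K\hat a\, T_{d-2}(\hat a)\,\hat a K$ has the form $k_0\hat a s\hat a k_d$ with $s=\sum_{\omega\in S_{d-1}} y_{\omega(0)}\hat a y_{\omega(1)}\hat a\cdots\hat a y_{\omega(d-2)}$. Setting $w=k_d k_0 \in K$ and using cyclic invariance of trace to pull $\hat a w$ to the front, I get
$$\tr(k_0\hat a s\hat a k_d) \,=\, \sum_{\omega\in S_{d-1}} \tr\bigl(\hat a w\hat a y_{\omega(0)}\hat a y_{\omega(1)}\cdots \hat a y_{\omega(d-2)}\bigr).$$

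I then introduce the $F$-multilinear form $\Phi(z_0,\ldots,z_{d-1})=\tr(\hat a z_0\hat a z_1\cdots\hat a z_{d-1})$, which is cyclically symmetric in its arguments, and its full $S_d$-symmetrization $\tilde\Phi$. The combinatorial key is the identity
$$\sum_{\omega\in S_{d-1}}\Phi\bigl(w,y_{\omega(0)},\ldots,y_{\omega(d-2)}\bigr)\,=\,\tfrac{1}{d}\,\tilde\Phi(y_0,\ldots,y_{d-2},w),$$
which holds because fixing a single letter in position $0$ of a cyclic word of length $d$ selects exactly one representative from each cyclic equivalence class (the count $d\cdot(d-1)!=d!$ confirms it, together with cyclic invariance of $\Phi$). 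By hypothesis $\tilde\Phi(x,\ldots,x)=d!\,\Phi(x,\ldots,x)=d!\,\tr((x\hat a)^d)=0$ for every $x\in K$, so characteristic-zero polarization forces $\tilde\Phi\equiv 0$ on $K^d$. Consequently the trace of every generator $k_0\hat a s\hat a k_d$ vanishes, and since such elements span $(K\hat aK)^d$, we conclude $\tr((K\hat aK)^d)=0$.

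I expect the main obstacle to be the combinatorial identification of the partial $S_{d-1}$-sum with $\tilde\Phi/d$: it requires a careful orbit count and uses cyclic invariance of $\Phi$ in two distinct roles, once to rewrite the trace of the generator and once to enumerate cyclic classes of length-$d$ words, and these should not be conflated. The remaining ingredients are routine --- \Pref{trouble} packages the reduction to the symmetric sum, and polarization supplies the standard passage from a vanishing diagonal to a vanishing symmetric multilinear form.
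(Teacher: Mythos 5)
Your proposal is correct and takes essentially the same route as the paper: both arguments hinge on Proposition~\ref{trouble} to replace $(K\hat aK)^d$ by $K\hat a\,T_{d-2}(\hat a)\,\hat a K$, then use cyclic invariance of trace together with characteristic-zero polarization of $\tr((x\hat a)^d)$ to conclude. The paper phrases the polarization/cyclic-rotation step as a direct manipulation of the linearized trace identity (transferring $x_{d-1}$ to the end and substituting $x_{d-1}=yz$), whereas you package the same computation into the cyclically symmetric form $\Phi$ and its full symmetrization $\tilde\Phi$ and the orbit count $d\cdot(d-1)!=d!$ — a slightly more formal presentation of the identical argument.
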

\begin{proof}
Linearizing the relation, we find that $$\tr(\sum_{\s \in S_d} x_{\s
0} \hat a x_{\s 1} \cdots \hat a x_{\s(d-1)} \hat a) = 0,$$ so by
rotation under the trace we may transfer the $x_{d-1}$ entry to the
right, and obtain
$$\tr(\sum_{\s \in S_{d-1}} \hat a x_{\s 0} \hat a x_{\s 1} \cdots x_{\s(d-2)} \hat a x_{d-1}) = 0.$$
Replacing $x_{d-1}$ by $yz$ and rotating again, we obtain
$$\tr(z \hat a T_{d-2}(\hat a) \hat a y) = 0$$
for every $y,z$, so that $\tr((KaK)^d) = \tr(K \hat a T_{d-2}(\hat
a) \hat a K) = 0$ by the proposition.
\end{proof}

\begin{cor}
Assume that $\mychar F = 0$. If $\tr((xa)^d) = 0$ for every $x \in
K$, then $\tr((KaK)^d) = 0$ (for any $d$).
\end{cor}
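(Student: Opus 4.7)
The plan is to reduce to the preceding corollary by replacing $a$ with a carefully chosen $\hat a \in KaK$. I seek $\hat a$ satisfying three properties:
(i) $\tr((x\hat a)^d) = 0$ for every $x \in K$;
(ii) $K\hat a K = KaK$; and
(iii) $\hat a$ lies in the Zariski-dense subset of $KaK$ on which the preceding corollary applies (the one coming from Proposition~\ref{trouble}).
Granted such $\hat a$, the preceding corollary yields $\tr((K\hat a K)^d) = 0$, and property (ii) converts this into the desired $\tr((KaK)^d) = 0$.

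The natural candidates are $\hat a = k_1 a k_2$ with $k_1, k_2 \in K^\times$. Property (ii) is immediate since multiplication by elements of $K^\times$ on either side of $a$ preserves the bimodule $KaK$. For property (i), cyclic invariance of the trace gives
\[
\tr\bigl((x\hat a)^d\bigr) \;=\; \tr\bigl((xk_1 a k_2)^d\bigr) \;=\; \tr\bigl(((k_2 x k_1)\, a)^d\bigr) \;=\; 0,
\]
since $k_2 x k_1 \in K$ and the hypothesis asserts $\tr((ya)^d) = 0$ for every $y \in K$. Thus every such $\hat a$ satisfies (i) and (ii).

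It remains to arrange property (iii). In the matrix model $\bar A = L \tensor[F] A \isom \M[n](L)$ with $\bar K$ embedded as diagonal matrices, the entries of $\hat a = k_1 a k_2$ are $\hat a_{\tau, \tau'} = k_{1,\tau}\, k_{2, \tau'}\, a_{\tau, \tau'}$, where the $k_{i,\tau}$ are the diagonal coordinates of $k_i$. The dense-set condition from Proposition~\ref{trouble} translates into non-vanishing of certain path-sums in these entries; these are polynomials in the $2n$ indeterminates $(k_{1,\tau}, k_{2,\tau'})$, and the argument succeeds once we show they are not identically zero. Heuristically, since the support of $\hat a$ matches that of $a$ and corresponds to the $H$-biset associated to $KaK$, each non-trivial path in this biset contributes a monomial with a non-zero coefficient, so generic $(k_1, k_2) \in K^\times \times K^\times$ should push $\hat a$ into the dense subset.

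The main obstacle is precisely this last verification. The map $(k_1, k_2) \mapsto k_1 a k_2$ has image of dimension at most $2n$ in $KaK$, which may have dimension up to $n^2$, so its image is generally a proper subvariety and mere Zariski density of the locus within $KaK$ does not suffice. One must track carefully through the combinatorial path-sum argument underlying Proposition~\ref{trouble} and show that the diagonal-scaling structure $\hat a_{\tau,\tau'} = k_{1,\tau} k_{2,\tau'} a_{\tau,\tau'}$ does not force cancellation of the relevant polynomials — which is plausible because the scaling factors act independently on each matrix entry of $a$, but requires an honest case analysis of the path sums.
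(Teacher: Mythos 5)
Your reductions (i) and (ii) are correct: cyclic invariance of the trace gives $\tr\bigl((x k_1 a k_2)^d\bigr) = \tr\bigl(((k_2 x k_1)a)^d\bigr)$, so the hypothesis on $a$ passes to every $\hat a = k_1 a k_2$ with $k_1, k_2 \in K^\times$, and $K\hat a K = KaK$ is clear. But the step you flag as the ``main obstacle'' is a genuine gap, not a technicality. The good locus is a dense open subset of $KaK$, whose $F$-dimension $n\cdot\dim_K(KaK)$ is at least $2n$ once $a\notin K$, while your candidates $\hat a = k_1 a k_2$ sweep out an image of dimension at most $2n-1$ (the map $(k_1,k_2)\mapsto k_1 a k_2$ is constant on the $F^\times$-orbits $(k_1,k_2)\mapsto(ck_1,c^{-1}k_2)$). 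A dense open subset of $KaK$ has no obligation to meet a proper closed subvariety, so density alone gets you nothing; you would have to re-enter the path-sum combinatorics and verify that the restricted diagonal scalings $\hat a_{\tau,\tau'} = k_{1,\tau}k_{2,\tau'}a_{\tau,\tau'}$ already avoid the exceptional set, which you identify as the remaining task but do not carry out.

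The paper's proof never introduces $\hat a$. It linearizes the hypothesis $\tr\bigl((xa)^d\bigr)=0$ in the $K$-variable $x$, obtaining the vanishing of the symmetric sum $\tr\bigl(\sum_{\sigma} x_{\sigma(0)}\,a\,x_{\sigma(1)}\,a\cdots a\,x_{\sigma(d-1)}\,a\bigr)=0$; uses cyclic invariance to isolate $x_{d-1}$ on the right; substitutes $x_{d-1}=yz$; rotates once more to get $\tr\bigl(z\,a\,T_{d-2}(a)\,a\,y\bigr)=0$ for all $y,z\in K$; and then applies the equality $K\,a\,T_{d-2}(a)\,a\,K=(KaK)^d$ to conclude. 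This sidesteps your property (iii) entirely: the argument runs on $a$ itself and never needs to locate a well-placed $\hat a$ inside the thin family $k_1 a k_2$. The cleanest repair of your route is to carry out this linearization directly rather than reducing to the dense-set corollary.
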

\begin{proof}
Linearizing the relation, we find that $$\tr(\sum_{\s \in S_d} x_{\s
0} a x_{\s 1} \cdots a x_{\s(d-1)} a) = 0,$$ so by rotation under
the trace we may transfer the $x_{d-1}$ entry to the right, and
obtain
$$\tr(\sum_{\s \in S_{d-1}} a x_{\s 0} a x_{\s 1} \cdots x_{\s(d-2)} a x_{d-1}) = 0.$$
Replacing $x_{d-1}$ by $yz$ and rotating again, we obtain
$$\tr(z a T_{d-2}(a) a y) = 0$$
for every $y,z$, so that $\tr((KaK)^d) = \tr(K a T_{d-2}(a) a K) =
0$ by the proposition.
\end{proof}

\subsection{Using Brauer-Severi varieties}

The following is now obsolete, because if $Ka$ is Kummer then $K$ is cyclic and $a$ acts on $K$; no need to extend to $L$.

 \begin{prop}
Assume that $Ka$ is Kummer.  There exist $w\in A_L$, $b\in K$ and
$1\leq t \leq p-1$ such that $a=w^{-1}ky^tw$.
 \end{prop}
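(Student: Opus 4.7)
The plan is to combine Corollary \ref{bisp} with Proposition \ref{P41} to convert the Kummer hypothesis into a purely combinatorial constraint on the $H$-biset $\mathcal H \sub G$ associated to $KaK$, then extract the group-theoretic structure from that constraint.

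First, I would invoke Corollary \ref{bisp} to promote the hypothesis: if $Ka$ is Kummer then so is $KaK$, so $\rho_k(x) = 0$ for every $x \in KaK$ and every $1 \le k \le n-1$. By Proposition \ref{P41} (applied with $r = n-1$), this is equivalent to the statement that, for $1 \le s \le n-1$, there are no $g_1,\dots,g_s \in \mathcal H$ with $g_1\cdots g_s = 1$, i.e.\ $H \not\subseteq \mathcal H^s$ for $1 \le s \le n-1$.

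Next, I would pick any $g \in \mathcal H$ and examine the cosets $H, gH, g^2 H, \dots$ in $G/H$. If $g^i H = g^j H$ for $0 \le i < j \le n-1$, then $g^{j-i} \in H$, so $H \sub \mathcal H^{j-i}$, contradicting the above with $s = j-i \le n-1$. Hence the cosets $H, gH, \dots, g^{n-1} H$ are pairwise distinct; since $[G:H] = n$, they exhaust $G/H$, and consequently $g^n \in H$ while $g^s \notin H$ for $1 \le s < n$. The next step is to rule out any additional double cosets: if $g^s H \sub \mathcal H$ for some $1 < s \le n-1$, then $g^n = g^{n-s} g^s$ would lie in $\mathcal H^{n-s} \cdot \mathcal H = \mathcal H^{n-s+1}$, whence $H \sub \mathcal H^{n-s+1}$ with $n-s+1 \le n-1$, again a contradiction. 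Therefore $\mathcal H = g H$.

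Since $\mathcal H$ is in particular stable under left multiplication by $H$, we have $Hg H = gH$, so $g^{-1} H g \sub H$, and by order equality $g$ normalizes $H$. Combined with $\langle H,g\rangle = G$ (which follows from $G/H = \{g^i H\}$), this gives $H \lhd G$. Here is the step I expect to be the cleanest but most important: using that $L$ is the \emph{Galois closure} of $K/F$, a normal $H$ forces $L^H = K$ to be Galois over $F$, whence $L = K$ and $H = \{1\}$. Then $G$ is cyclic of order $n$ generated by $g$, and $\mathcal H = \{g\}$.

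Finally, by Corollary \ref{order} the bimodule $KaK = \Phi(\{g\})$ has $K$-dimension $1$, so $KaK = Ku$ for some $u$. Invertibility follows from Lemma \ref{invelem} (we may choose $u$ invertible, and in particular take $u = a$ when $a$ itself is already invertible, since $Ka = KaK$ is then one-dimensional over $K$ and contains $a$). The commutation $uku^{-1} = g(k)$ for $k \in K$ is then read off directly from Corollary \ref{aconj}(2), since $L = K$ in our situation. The main obstacle in the argument is the step showing $\mathcal H$ is a \emph{single} coset rather than merely containing one; this is where the full force of the Kummer condition (all $\rho_k$ vanishing, not just $\rho_n$) is used.
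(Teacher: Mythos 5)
Your argument takes a genuinely different route from the paper's own proof. The paper proves this proposition via Brauer--Severi varieties: it exhibits a birational presentation of $\SB(A_L)$ using the linear system $C(V)$ for $V = aK + F$, compares function fields, and invokes Amitsur's theorem to deduce $A \cong (A_L)^t$, translating this into the norm identity $\Norm(a) = \Norm(b)\beta^t$ and then applying Skolem--Noether to produce the conjugating element $w$. You instead run the bimodule-semiring machinery of the main body of the paper: you push the Kummer hypothesis through Corollary~\ref{bisp} and Proposition~\ref{P41} to a combinatorial constraint on the associated $H$-biset $\mathcal{H}$, force $\mathcal{H}$ to be a single coset $gH$, conclude $H \lhd G$ and hence $H = 1$ since $L$ is the Galois closure. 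This is, in substance, a reproof of Theorem~\ref{Kum1}, and since that theorem is already stated and proved in the paper you could simply cite it. Notably, the paper's own comment preceding this proposition (``The following is now obsolete, because if $Ka$ is Kummer then $K$ is cyclic and $a$ acts on $K$; no need to extend to $L$'') endorses exactly the route you chose, which is the real payoff: your route stays internal to the algebraic framework of the paper and avoids the geometric input entirely.

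However, your proof stops short of the stated conclusion. You end with $KaK = Ku$ and $uku^{-1} = g(k)$, but the proposition asserts the existence of $w \in A_L$, $b \in K$ and $t$ with $a = w^{-1}by^tw$ (the $k$ in the statement is surely a typo for $b$). That last step is missing. To close it: since $G = \Gal(K/F) = \langle \sigma \rangle$ is cyclic of order $p$ and $g$ is a generator, write $g = \sigma^t$ with $1 \le t \le p-1$. Then for $\ell \in K$, $a\ell = g(\ell)a = \sigma^t(\ell)a = y^t \ell y^{-t} a$, so $y^{-t}a$ centralizes $K$; because $K$ is maximal in the degree-$p$ algebra, $y^{-t}a \in K$, hence $a = y^t \cdot (y^{-t}a) = b y^t$ with $b = \sigma^t(y^{-t}a) \in K$. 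This is actually \emph{stronger} than the claimed statement, as no conjugation by $w$ is required --- which is precisely why the paper calls the proposition obsolete in light of Theorem~\ref{Kum1}. Without this final paragraph, your argument proves a different (though closely related) statement than the one asked.
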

 \begin{proof}
 We will prove there are elements $b,t$ as above such that $\Norm(ky^t)=\Norm(a)$ and since both are Kummer elements S.N. implies the existance of $w$ as above.
 We first find an explicit birational presentation for $\SB(A_L)$, the Severi-Brauer variety of $A_L$.
 Define the subspace $V=aK+F$. Notice that $\dim(V)=p+1$ as $a\notin K$ and $a,K$ generate $A_L$ as the degree is a prime.
 Let $C(V)$ be the variety defined by $C(V)=\{v\in V|\Norm(v)=0\}$, then $C(V)$ is birational to $\SB(A_L)$ by [Sal. lecture notes or Mat. phd].
 For $v=ak+f\in V$ computing the norm in the field extension $L[ak]$ we get $$\Norm(v)=f^p+\Norm(b)\Norm(a)$$

Now consider the symbol $A=(\alpha, \Norm(a))_p$. Let $U=yK+F\leq A$ and $C(U)$ the variety defined as above. Then we have $$L(\SB(A))\cong L(C(U))\cong L(C(V))\cong L(\SB(A_L)).$$ Thus by [Amitsur] we get $A\cong (A_L)^t$ for some $t$ as above, which is the same as $$(\alpha, \Norm(a))_p\cong (\alpha, \beta^t)_p$$ implying that there exists $b$ as above such that $$\Norm(a)=\Norm(b)\beta^t.$$ Remembering that $\beta^t=\Norm(y^t)$ in $A_L$, we conclude that $\Norm(a)=\Norm(ky^t)$ as needed.
\end{proof}

\section{\Tref{Kum5}}

Problem: does \Tref{Kum5} hold if we replace $\rho_k(x)=0$ by $\tr(x^k) = 0$?

\fi 

\printindex


\begin{thebibliography}{DK2}

\bibitem[AlM]{AM}
Albert, A.A.; Muckenhoupt. {\em On matrices of trace 0}, Michigan
Math. J. \textbf{1} (1957), 1--3.



\bibitem[Al]{A} A.A.~Albert, {\em Two element generation of a separable algebra}, Bull.~Amer.~Math.~Soc. 50  (1944), 786-788.


\bibitem[G]{G}
Guralnick, R. {\em } Some applications of subgroup structure to
probabilistic generation and covers of curves, {\it Algebraic groups
and their representations}, NATO Adv. Sci. Inst. Ser. C. Math. Phys.
Sci. 517 (1998), 301--320

\bibitem[Jac1]{J}  N.~Jacobson, {\em Structure of Rings}, Amer. Math. Soc. Colloq. Pub. 37, 1956.

\bibitem[Jac2]{J1} N.~Jacobson,  {\em Generation of separable and
central simple algebras}, J. Math. Pures Appl. 36 (1957), 217--227.



\bibitem[Jac3]{Jac3}
N.~Jacobson, Brauer factor sets, Noether factor sets, and crossed
products, {\em Emmy Noether in Bryn Marr} 1--19, Springer-Verlag New
York


\bibitem[Jac4]{Jac}
N.~Jacobson, Finite-Dimensional Division Algebras over Fields,
Springer, 1996.

\bibitem[P]{Pass}
D.S.~Passman, ``Permutation Groups'', Dover, 2012.

\bibitem[R1]{Row1} L.H.~Rowen, {\it {Polynomial identities in ring theory}},
Academic Press Pure and Applied Mathematics~\textbf{84}, 1980.

\bibitem[RS]{RS} L.H.~Rowen,  and D.~Saltman, {\it Dihedral algebras are cyclic},
Proc. Amer. Math. Soc. 84 (1982), 162- 164.

\bibitem[S]{S} D.~Saltman, Lectures in Division algebras, CBMS 94,
 Amer. Math. Soc., 1999.

\bibitem[W]{Wilf} H.~Wilf, Generating functionology, Academic Press, 1994.

\end{thebibliography}
\end{document}